\newtheorem{theorem}{Theorem}[section]
\newtheorem{lemma}[theorem]{Lemma}
\newtheorem{proposition}[theorem]{Proposition}
\newtheorem{definition}[theorem]{Definition}
\newtheorem{rem}[theorem]{Remark}
\newcommand{\Proof}{\par\noindent{\em Proof. }}
\newcommand{\eop}{\nopagebreak\hspace*{\fill}$\Box$\smallskip}
\newcommand{\N}{\Bbb N}
\newcommand{\R}{\Bbb R}
\newcommand{\C}{\Bbb C}
\newcommand{\vstress}{{S}}
\def\Id{\mathbf{Id}}
\def\id{\mathbf{id}}
\def\eps{\varepsilon}
\def\dist{\operatorname{dist}}
\def\XXint#1#2#3{{\setbox0=\hbox{$#1{#2#3}{\int}$}
     \vcenter{\hbox{$#2#3$}}\kern-.5\wd0}}
\newcommand\thickbar[1]{\accentset{\rule{.6em}{.8pt}}{#1}}
\newcommand{\cm}{\color{black}}
\newcommand{\NNN}{\color{black}} 
\newcommand{\BBB}{\color{black}} 
\newcommand{\EEE}{\color{black}}
 \newcommand{\PPP}{\color{black}} 
\newcommand{\second}{\color{black}}
\newcommand{\first}{\color{black}}
\numberwithin{equation}{section}
\title[Derivation of von K\'{a}rm\'{a}n plate theory in the framework of viscoelasticity]{Derivation of von K\'{a}rm\'{a}n plate theory in the framework of  three-dimensional  viscoelasticity}
\author{Manuel Friedrich}
\author{Martin Kru\v{z}\'ik}
\subjclass[2010]{74D05, 74D10, 35A15, 35Q74, 49J45}
 \keywords{Viscoelasticity, metric gradient flows, \BBB dimension reduction, \EEE $\Gamma$-convergence, dissipative distance, curves of maximal slope, minimizing movements.}
\address[Manuel Friedrich]{Applied Mathematics,  
Universit\"{a}t M\"{u}nster, Einsteinstr. 62, D-48149 M\"{u}nster, Germany}
\email{manuel.friedrich@uni-muenster.de}
\address[Martin Kru\v{z}\'ik]{Czech Academy of Sciences, Institute of Information Theory and Automation,
Pod vod\'arenskou v\v{e}\v{z}\'i 4, CZ-182 08 Praha 8, Czechia
(corresponding address) \& Faculty of Civil Engineering, Czech Technical University, Th\'akurova 7, CZ-166 29 Praha 6, Czechia}
\email{kruzik@utia.cas.cz}
\begin{document}

\maketitle

\begin{center}
\today
\end{center}
\smallskip

\begin{abstract}
We apply  a quasistatic nonlinear  model  for nonsimple viscoelastic materials at a finite-strain setting in the Kelvin's-Voigt's rheology  to derive a viscoelastic plate model of von K\'{a}rm\'{a}n type. 
We start from \second  time-discrete \rm  \EEE solutions to a model of three-dimensional  viscoelasticity considered in \cite{MFMK} where the   viscosity stress tensor   complies with the principle of time-continuous frame-indifference.
Combining the derivation of nonlinear plate theory by Friesecke, James and M\"{u}ller \BBB \cite{FrieseckeJamesMueller:02, hierarchy}, \EEE and  the abstract theory of gradient flows in metric spaces by Sandier and Serfaty \cite{S1}  we perform a dimension-reduction from 3D to 2D and identify weak solutions of viscoelastic form of von K\'{a}rm\'{a}n plates.
\end{abstract}

\section{Introduction}

Dimension-reduction problems play a significant  role in nonlinear analysis and numerics because they allow for simpler  computational approaches, still  preserving main features of the bulk system. 
\BBB In this context, it is important \EEE that  a clear relationship between the full three-dimensional problem and its lower-dimensional counterpart  is  made rigorous. \BBB The last decades have witnessed remarkable progress in that direction through the use of variational methods, particularly by  \EEE $\Gamma$-convergence \cite{DalMaso:93} together with \BBB quantitative \EEE rigidity estimates \cite{FrieseckeJamesMueller:02}. \BBB Among the large body of results, we mention here only the rigorous justification of membrane theory \cite{Dret1, Dret2}, bending theory \cite{Casarino, FrieseckeJamesMueller:02, Pantz}, and von K\'{a}rm\'{a}n theory \cite{hierarchy, lecumberry} for plates as variational limits of nonlinear three-dimensional elasticity for vanishing thickness. In particular, we refer to \cite{hierarchy} for the derivation of a hierarchy of different plate models and for a thorough literature review.  

\BBB In the present work, we apply a similar scenario of deriving plate theories  \EEE to problems in nonlinear viscoelasticity: starting from a three-dimensional model of a \BBB nonsimple viscoelastic material at a finite strain setting  in Kelvin's-Voigt's \EEE rheology (i.e., a spring and a damper coupled in parallel), recently treated by the authors in \cite{MFMK}, we derive a model of von K\'{a}rm\'{a}n (vK) viscoelastic plates.

 \BBB In \cite{MFMK}, the existence of weak solutions  for  such a three-dimensional model of nonsimple viscoelastic materials has been established  \EEE  with the help of gradient flows in metric spaces developed in \cite{AGS,S1}.  The notion of \BBB a nonsimple (or second-grade) \EEE material refers to the fact that the elastic energy depends also on the second gradient of the deformation. This concept, first suggested  by Toupin \cite{Toupin:62,Toupin:64}, has proved to be useful in modern mathematical elasticity, see e.g.~\cite{BBMK,BCO,Batra, MielkeRoubicek:16,MielkeRoubicek,Podio}. \BBB We \EEE also refer to \cite{capriz, dunn}  where thermodynamical consistency of such models has been shown.  We point out that this approach seems to be currently unavoidable \EEE in order to obtain the existence of \BBB solutions in the nonlinear \second {\it viscoelastic} \EEE setting, \EEE see  \second  \cite{MFMK, MielkeRoubicek} \rm \EEE and   \cite{MOS} for a general discussion about the interplay between the elastic energy and viscous dissipation.  \BBB  Nevertheless, a main justification is the observation  in  \cite{MFMK} that, in the small strain limit, the problem leads to the standard system  for  linearized viscoelasticity without second gradient.

In the present work, we consider a thin plate of thickness $h$ and pass to the dimension-reduction limit $h \to 0$ in the vK energy regime. We show that this gives rise to effective equations in terms of suitably rescaled in-plane and out-of-plane displacements which  feature membrane and bending terms \emph{both} in the elastic and the viscous stress.   This represents a dissipative counterpart of the purely elastic vK theory which was first formulated more than hundred years ago \cite{VonKarman}.  Besides identifying the correct 2D limiting equations of viscous vK plates, which to the best of our knowledge has not been done in previous literature, the main goals of this contribution are twofold: (1) we show  existence of solutions to the effective 2D system, and (2) we prove \second  rigorously \rm \EEE that these solutions are in a certain sense the limits of solutions to the 3D equations as $h \to 0$, \second  see Theorem \ref{maintheorem2} and Theorem \ref{maintheorem3} for details. \EEE \rm

\second Let us mention that there are previous works on viscoelastic plates \cite{Bock1,Park}, some even including inertial effects  \cite{Bock2, Bock3}. Their starting point, however,  is already a plate model.   Our model, derived rigorously from three-dimensional viscoelasticity, is new and, if viscosity is dropped, it reduces to  the well-known model of elastic plates \cite{hierarchy}. Let us emphasize that for purely elastic models neglecting viscosity various existence results  were obtained for F\"{o}ppl-von K\'{a}rm\'{a}n plates without resorting to a second-grade material, see e.g.~\cite{BK,  hierarchy, lecumberry, LMP,  MPT} . \rm   \EEE  We refer to \cite{MFMKJV} for a numerical study of vK  viscoelastic plates, \first   and mention that \rm in that paper we have also proved the existence of solutions to the viscoelastic vK plate equations   by means of converging  numerical discretizations.   This work, however,  {\it already relies} on   the  plate model derived in the present paper. \EEE   For the derivation of a plate model without viscosity but with inertia we refer to 
\cite{AMM}. \rm

\BBB We now describe our setting in more detail. Without inertia, \EEE  a    nonlinear viscoelastic material in Kelvin's-Voigt's rheology satisfies the system of equations 
\begin{align}\label{eq:viscoel}
-{\rm div}\Big(\partial_FW(\nabla w)   + \partial_{\dot F}R(\nabla w,\partial_t \nabla w)  \Big) =  f \BBB e_3 \EEE \ \ \  \text{ in $  [0,T] \times  \BBB \Omega_h \EEE $.} \end{align}
Here, $[0,T]$ is a process time interval with $T>0$,  \BBB  $\Omega_h := S \times (-\frac{h}{2},\frac{h}{2}) \subset\R^3 $  \EEE is a smooth, \BBB thin, \EEE bounded domain representing the reference configuration,  and   $w:[0,T]\times \Omega_h\to\R^3$ is a deformation mapping  with    deformation gradient $\nabla w$.  Moreover,   $W:\R^{3\times 3}\to  [0,\infty]$ is a stored energy density representing a potential of the first Piola-Kirchhoff stress tensor \BBB $\partial_F W:=\partial W/\partial F$ \EEE and  $F\in\R^{3\times 3}$ is the placeholder of $\nabla w$. \BBB The function \EEE $R:  \R^{3 \times 3} \times \R^{3 \times 3} \to [0,\infty) \EEE $ denotes a (pseudo)potential of dissipative forces,  where $\dot F \in \R^{3 \times 3}$ is the placeholder of $\partial_t \nabla w$. \BBB Finally, \EEE $f: \Omega_h\to \R$  is a volume density of external  forces, which is considered \cm independent of time and the deformation  $y$, \EEE \BBB  and  for simplicity  acting on $\Omega_h$ only in normal direction $e_3$. \EEE  

We assume that $W$ is a frame-indifferent function, i.e., $W(F)=W(QF)$ for $Q\in{\rm SO}(3)$ and $F\in\R^{3\times 3}$. This implies that $W$ depends on the right Cauchy-Green strain tensor  $C:=F^\top F$, see e.g.~\cite{Ciarlet}.   The second term on the left-hand side of \eqref{eq:viscoel} is the stress tensor $\vstress(F,\dot F):= \partial_{\dot F} R(F,\dot F)$ which has its origin in viscous dissipative mechanisms of the material.  We point out that its potential $R$ plays an  analogous role as $W$ in the case of purely elastic, i.e., non-dissipative processes. Naturally, we require that $R(F,\dot F)\ge R(F,0)=0$.  The viscous stress tensor must comply with the time-continuous frame-indifference principle  meaning that $\vstress(F,\dot F)=F\tilde\vstress(C,\dot C)$, where $\tilde\vstress$ is a symmetric matrix-valued function \BBB and  $\dot C$ denotes the  time derivative of the right Cauchy-Green strain tensor $C$. \EEE  This condition constraints 
$R$ so that  \BBB $R(F,\dot F)=\tilde R(C,\dot C)$  for some nonnegative function $\tilde R$, see \cite{Antmann, Antmann:04,MOS}. \first  In the following, we suppose that the  material is homogeneous, i.e., neither the elastic stored energy density nor  the dissipation depend on   material points. \rm \EEE  \second  Moreover, for technical reasons, we will restrict our analysis to the case of zero Poisson's ratio in the out-of-plane direction, see \eqref{eq:Q2}-\eqref{eq:Q22} and Remark \ref{rem: Poisson2} for some details in that direction. Such an assumption, also present in other works (see e.g.~\cite{BK}), simplifies the analysis. \rm \EEE

\BBB Following the study in \cite{MFMK}, \EEE we consider a version of \eqref{eq:viscoel} for \BBB second-grade  \EEE materials where the elastic stored energy density (and the  first Piola-Kirchhoff stress tensor, too) depends also on the second gradient  of $w$.  In this case, we get 

\begin{align}\label{eq:viscoel-nonsimple}
-{\rm div}\Big( \partial_F W(\nabla w) + \eps\mathcal{L}_{P}(\nabla^2 w)  + \partial_{\dot{F}}R(\nabla w,\partial_t \nabla w)  \Big) =  f \BBB e_3 \EEE \ \ \  \text{ in $ [0,T] \times \EEE \Omega_h$,}
 \end{align}
where $\eps>0$ is small and  $\mathcal{L}_{P}$ is a  first \EEE order differential operator which corresponds to an additional term $\int_{\Omega_h} P(\nabla^2 w)$  in the stored elastic energy,  \BBB associated to a convex and frame-indifferent density $P$. \EEE (We refer to  \eqref{LP-def} for more details.)  As already mentioned before, this  idea by Toupin  \cite{Toupin:62,Toupin:64} has proved to be useful in mathematical elasticity     because it brings additional compactness to the problem.  \BBB For example, concerning existence theory for  \BBB second-grade \EEE materials, \EEE no convexity properties of $W$ are needed, in particular, we do not have to assume that $W$ is polyconvex \cite{Ball:77,Ciarlet}.  Moreover, it is shown in \cite{HealeyKroemer:09} that, if $W$ satisfies suitable and physically relevant  growth conditions (as $W(F)\to\infty$ if ${\rm det}\, F\to 0$),   then every minimizer of the elastic energy is a weak solution to the corresponding Euler-Lagrange equations.

\BBB

In \cite{hierarchy}, it has been shown that for forces scaling like $\sim h^3$, which corresponds to an energy per thickness of $\sim h^4$, the nonlinear elastic energy can be related rigorously by $\Gamma$-convergence as $h\to 0$  to the so-called \emph{von K\'{a}rm\'{a}n functional}. This functional is given in terms of rescaled in-plane displacements $u$ and out-of-plane  displacements $v$. The  corresponding Euler-Lagrange equations take the form   \NNN
\begin{align*}
{\rm div}\big(\C_W\big( e(u)  + \tfrac{1}{2} \nabla v \otimes \nabla v  \big) \big) & = 0, \\
-{\rm div}\Big( \C_W\Big( e(u)  + \frac{1}{2} \nabla v \otimes \nabla v      \Big) \nabla v \Big)  + \frac{1}{12} {\rm div} \, {\rm div}\big( \C_W \nabla^2 v \big)  & =   f   \ \ \  \text{in }   S,
\end{align*}
\EEE where $e(u):=(\nabla u+(\nabla u)^\top)/2$ denotes the linear strain tensor, and  $\C_W$ is the tensor of elastic constants, derived suitably from $W$ (see \eqref{eq:Q2}--\eqref{eq: order4} below for details). The first equation corresponds to the \emph{membrane strain},  which  was used already earlier in F\"oppl's work \cite{Foppl} and leads to a nonlinearity in the vK equations. The second equation \NNN includes also a \EEE \emph{bending contribution}.  In the present context, we will see that  the  passage from the nonlinear elastic energy to the vK functional   by $\Gamma$-convergence remains true if the nonlinear energy is enhanced by a second gradient term $\eps \int_{\Omega_h} P(\nabla^2 w)$ for certain scalings of $\eps$, see Theorem \ref{th: Gamma}. 

 In the frame of viscoelastic materials, we address the relation between the nonlinear equations \eqref{eq:viscoel-nonsimple} to the following equations for viscoelastic  vK plates:  \NNN
\begin{align}\label{eq: equation-simp-intro2}
\begin{cases} 
0 = {\rm div}\Big(\C_W\big( e(u)  + \frac{1}{2} \nabla v \otimes \nabla v  \big) +  \C_R \big( e(\partial_t u) +   \nabla \partial_t v    \otimes  \nabla v \big) \Big)  , &\vspace{0.1cm} \\ 
f = -{\rm div}\Big(\Big(\C_W\big( e(u)  + \frac{1}{2} \nabla v \otimes \nabla v  \big)  +  \C_R \big( e(\partial_t u) +   \nabla \partial_t v   \otimes \nabla v \big) \Big) \nabla v \Big)  &\vspace{0.1cm}\\
 \quad \quad  + \tfrac{1}{12}  {\rm div} \, {\rm div}\Big( \C_W \nabla^2 v + \C_R \nabla^2 \partial_t v \Big)     &\ \ \ \    \text{in } [0,\infty) \times S,
\end{cases}
\end{align} \EEE
where  $\C_R$ is the tensor of viscosity coefficients which is derived from the dissipation potential $R$. More precisely, we prove  existence of solutions to \eqref{eq: equation-simp-intro2} and make the dimension reduction rigorous, i.e., we show that solutions to  \eqref{eq:viscoel-nonsimple} converge to solutions of \eqref{eq: equation-simp-intro2} in a specific sense. \second  The solutions have to be understood in a weak sense, see \eqref{eq: weak equation} for the exact definition. \rm  \EEE     We point out that the same relation is expected to hold also for the original problem of simple materials \eqref{eq:viscoel} (i.e., only the first gradient of $w$ is considered), but a proof seems  unreachable (or at least rather difficult) at the moment. \second We emphasize  that a nonsimple material model is used here because  viscous phenomena are  considered, too. In fact, dissipation due to viscosity  leads to the loss of weak lower semicontinuity in  semidiscretized variational problems, see \cite{MOS} for a detailed discussion.  \EEE

\EEE

Our general strategy is to treat the system of quasistatic viscoelasticity in the abstract setting of metric gradient flows \cite{AGS}, \BBB where  the underlying metric is given by a \emph{dissipation distance} suitably related to the potential $R$ (see \eqref{intro:R} below). \EEE To our best knowledge, this was formulated  for the first time in  \cite{MOS} for simple materials.   \BBB Similar to \cite{MFMK},  our starting point is the existence of  time-discrete solutions to the three-dimensional equations \eqref{eq:viscoel-nonsimple}. Here, the second gradient term  allows to obtain an existence result without \cm  polyconvexity conditions \cite{Ball:77}  \EEE for the dissipation  which seems to be incompatible with frame indifference. Existence of  solutions to the equations \eqref{eq: equation-simp-intro2} of viscous vK  plates is guaranteed by identifying them as limits of solutions to the nonlinear three-dimensional equations  \eqref{eq:viscoel-nonsimple}. In this context, we follow  \EEE  the abstract framework of sequences of metric gradient flows, developed in  \cite{Ortner, S1,S2}. \BBB In using this theory, \EEE  the challenge lies in proving that the additional conditions needed to ensure convergence of gradient flows are satisfied.
\BBB

More specifically, to use the  abstract convergence result, lower semicontinuity of  (i) the energies, (ii) the metrics, and (iii) the local slopes is needed. (i) The estimate for the energies  essentially follows from \cite{hierarchy} where we show that
it still holds for nonsimple materials if the contribution of the second gradient in terms of $\eps$ (see  \eqref{eq:viscoel-nonsimple}) is chosen sufficiently small, cf.\  Theorem \ref{th: Gamma}.  (ii) The lower semicontinuity of the metrics can be established
in a very similar fashion. (iii) The lower semicontinuity of the local slopes, however, is very technical and the core of our argument. We briefly explain the main idea. The local slope  of an energy $\phi$ in a metric space with metric $\mathcal{D}$ is defined by
$$|\partial \phi|_{\mathcal{D}}(y): = \limsup_{z \to y} \frac{(\phi(y) - \phi(z))^+}{\mathcal{D}(y,z)}.$$
Consider a sequence of deformations $(w^h)_h$ of thin plates converging to a limit $(u,v)$. The first step in the proof is to show that the local slope in the limiting setting at some configuration $y=(u,v)$ can be determined by considering only variations of the form $z=(u_s, v_s) = (u,v) + s(\tilde{u},\tilde{v})$ for $s>0$ small. This follows from a specific representation, see Lemma~\ref{lemma: slopes}, which is based on some generalized convexity properties. (Recall, however, that the vK model is actually nonconvex.) Then the crucial step is to choose sequences $(w^h_s)_{h,s}$ where $w^h_s \to w^h$ represents a competitor sequence for the local slope in the 3D setting. For each $s>0$, $(w^h_s)_h$ has to be constructed  as a \emph{mutual recovery sequence} of $(u_s, v_s)$, i.e., for both the elastic energies and the dissipations. In this context, the rate of convergence needs to be \emph{linear in $s$} as $h\to 0$. The realization is in fact quite technical and the details are contained in Theorem \ref{theorem: lsc-slope}. An important step in this analysis is to understand the strain difference of the configurations $w^h$ and $w^h_{s}$, see Lemma \ref{lemma: strong convergence}.

 Let us mention that a derivation of a nonlinear bending theory (see \cite{FrieseckeJamesMueller:02}) in the setting of viscoelasticity seems to be even more involved and remains an open problem: the fact that deformations are generically not near a single rigid motion does not comply with  the model investigated in \cite{MFMK}. Even more severely, the characterization of the local slope in the limiting two-dimensional setting appears to be very difficult due to the nonlinear isometry constraint.

\EEE

The plan of the paper is as follows.  In Section~\ref{sec:Model}, we introduce the nonlinear three-dimensional viscoelastic system  and its two-dimensional \BBB vK limiting version \EEE in more detail. Then we state our main results.  In particular, Proposition~\ref{maintheorem1} shows existence of time-discrete solutions to the three-dimensional problem \eqref{eq:viscoel-nonsimple}. In  Theorem~\ref{maintheorem2} we present an existence result for solutions to the vK plate equations  \eqref{eq: equation-simp-intro2} \second in the ``time-continuous setting'', \EEE which is based on identifying these solutions with  so-called  \emph{curves of maximal slope} \cite{DGMT}.     Finally, Theorem~\ref{maintheorem3} shows the relationship between the two systems which relies on an  abstract convergence result for curves of maximal slope  and \BBB  their approximation  via the minimizing movement scheme.

 Section~\ref{sec3} is devoted  to definitions of generalized minimizing movements and curves of maximal slope.  \BBB  Here, we also recall results about sequences of curves of maximal slope and their approximation by minimizing movements.    Section~\ref{sec:energy-dissipation} is devoted to properties of the elastic energies and the dissipation distances. \BBB In particular, we show that the dissipation distances give rise to complete metric spaces, and derive some generalized convexity properties in the limiting 2D setting, as well as a representation of the local slope.  \EEE  Section~\ref{Sec:relation2D3D} discusses the relation between the three- and two-dimensional setting, compactness properties, and $\Gamma$-convergence results. \BBB Moreover, here we prove the fundamental lower semicontinuity properties for local slopes. \EEE Finally, proofs of our main results can be found in Section~\ref{sec results}.

\section{The model and main results}\label{sec:Model}

\BBB \subsection{Notation} \EEE
 In what follows, we use standard notation for Lebesgue spaces,  $L^p(\Omega)$, which are measurable maps on $\Omega\subset\R^d$, \BBB $d=2,3$, \EEE integrable with the $p$-th power (if $1\le p<+\infty$) or essentially bounded (if $p=+\infty$).    Sobolev spaces, i.e., $W^{k,p}(\Omega)$ denote the linear spaces of  maps  which, together with their weak derivatives up to the order $k\in\N$, belong to $L^p(\Omega)$.  Further,  $W^{k,p}_0(\Omega)$ contains maps from $W^{k,p}(\Omega)$ having zero boundary conditions (in the sense of traces).  To emphasize the target space $\R^k$, $k=1,2,3$, we write $L^p(\Omega;\R^k)$. If $k=1$, we write $L^p(\Omega)$ as usual.   We refer to \cite{AdamsFournier:05} for more details on Sobolev spaces and their duals.  We  also denote the components of vector  functions $y$ by $y_1$, $y_2$, and  $y_3$, and so on.

If $A\in\R^{d\times d\times d\times d}$ and $e\in\R^{d\times d}$ then $Ae\in\R^{d\times d}$ is such that for $i,j\in\{1,\ldots, d\}$ we define  $(Ae)_{ij}:=A_{ijkl}e_{kl}$ where we use Einstein's summation convention. An analogous  convention is used in similar  occasions, in the sequel.   \BBB By $\Id \subset \R^{3 \times 3}$ we denote the identity matrix. We often drop $dx$ at the end of integrals if the integration variable is clear from the context. \EEE   Finally, at many spots, we closely follow notation introduced in \cite{AGS} \BBB and \cite{hierarchy} \EEE to ease readability of our work.

\subsection{The setting}
We first introduce a 3D setting following the setup in \cite{hierarchy}. We consider  a right-handed  orthonormal system $\{e_1,e_2,e_3\}$ \PPP and \EEE  $S \subset \R^2$ open, bounded with Lipschitz boundary, in the span of $e_1$ and $e_2$. \PPP  Let $h>0$ small. We consider  \EEE  \emph{deformations} $w: S \times (-\frac{h}{2},\frac{h}{2}) \to \R^3$. It is convenient to work in a fixed domain $\Omega = S \times I$ with $I:= (-\frac{1}{2},\frac{1}{2})$ and to rescale deformations according to $y(x) = w(x',hx_3)$ such that $y: \Omega \to \R^3$, where we use the abbreviation $x' = (x_1,x_2)$. We also introduce the notation $\nabla' y = y_{,1} \otimes e_1  +y_{,2} \otimes e_2$  for the in-plane gradient, and the scaled gradient
\begin{align}\label{eq: scaled1} 
\nabla_h y := \Big(\nabla' y, \frac{1}{h} y_{,3} \Big) = \nabla w.
\end{align}
Moreover, we define the scaled second gradient by 
\begin{align}\label{eq: scaled2} 
(\nabla^2_h y)_{ijk} := h^{-\delta_{3j} - \delta_{3k}} (\nabla^2 y)_{ijk} = (\nabla^2 w)_{ijk} =\partial^2_{jk} w_i \ \  \text{for $i,j,k \in \lbrace 1,2,3\rbrace$}, 
\end{align}
where $\delta_{3j},\delta_{3k}$ denotes the Kronecker delta. 

\smallskip

\textbf{Stored elastic energy \BBB density  \EEE and body forces:} 
 We assume that $W: \R^{3 \times 3} \to [0,\infty]$ is a single well, frame-indifferent stored energy \BBB density \EEE with the usual assumptions in nonlinear elasticity. We   suppose  that there exists $c>0$ such that
\begin{align}\label{assumptions-W}
\begin{split}
(i)& \ \ W \text{ continuous and \BBB $C^3$ \EEE in a neighborhood of $SO(3)$},\\
(ii)& \ \ \text{frame indifference: } W(QF) = W(F) \text{ for all } F \in \R^{3 \times 3}, Q \in SO(3),\\
(iii)& \ \ W(F) \ge c\dist^2(F,SO(3)), \  W(F) = 0 \text{ iff } F \in SO(3),
\end{split}
\end{align}
where $SO(3) = \lbrace Q\in \R^{3 \times 3}: Q^\top Q = \Id, \, \det Q=1 \rbrace$. \BBB Moreover, for $p>3$, \EEE let $P: \R^{3\times 3 \times 3} \to [0,\infty]$ be a higher order perturbation satisfying 
\begin{align}\label{assumptions-P}
\begin{split}
(i)& \ \ \text{frame indifference: } P(QZ) = P(Z) \text{ for all } Z \in \R^{3 \times 3 \times 3}, Q \in SO(3),\\
(ii)& \ \ \text{$P$ is convex and $C^1$},\\
(iii)& \ \ \text{growth condition: For all $Z \in \R^{3 \times 3 \times 3}$ we have } \\&   \ \ \ \ \ \    c_1 |Z|^p \le P(Z) \le c_2 |Z|^p, \ \ \ \ \ \ |\partial_{Z} P(Z)|  \le c_2 |Z|^{p-1} 
\end{split}
\end{align}
for $0<c_1<c_2$. Finally, $f \in L^\infty(\Omega)$ denotes a volume normal force,   i.e., a force oriented in $e_3$ direction.    Note that more general forces could in principle be included, e.g., boundary forces (see \cite{lecumberry}). This is neglected here for the sake of simplicity rather than generality. 

\smallskip

\textbf{Dissipation potential and viscous stress:} We now introduce a dissipation potential. We follow here the discussion in \cite[Section 2.2]{MOS} and \cite[\BBB Section 2\EEE]{MFMK}.  Consider a time dependent deformation $y: [0,T] \times \Omega \to \R^3$. Viscosity is not only related to the strain rate $\partial_t \nabla_h  y(t,x)$  but also to the strain $\nabla_h y(t,x)$.  It  can be expressed in terms of a  dissipation potential $R(\nabla_h y, \partial_t \nabla_h y)$, where $R: \R^{3 \times 3} \times \R^{3 \times 3} \to [0,\infty)$. An admissible potential has to satisfy frame indifference in the sense (see \cite{Antmann, MOS})
\begin{align}\label{R: frame indiff}
R(F,\dot{F}) = R(QF,Q(\dot{F} + AF))  \ \ \  \forall  Q \in SO(3), A \in \R^{3 \times 3}_{\rm skew}
\end{align}
for all $F \in GL_+(3)$ and $\dot{F} \in \R^{3 \times 3}$, where $GL_+(3) = \lbrace F \in \R^{3 \times 3}: \det F>0 \rbrace$ and $\R^{3 \times 3}_{\rm skew} = \lbrace A  \in \R^{3 \times 3}: A=-A^\top \rbrace$. 

From the point of modeling,  it is  more  convenient to \second assume \EEE the existence of a (smooth) global distance $D: GL_+(3) \times GL_+(3) \to [0,\infty)$ satisfying $D(F,F) = 0$ for all $F \in GL_+(3)$. From this, an associated dissipation potential $R$ can be calculated by
\begin{align}\label{intro:R}
R(F,\dot{F}) := \lim_{\eps \to 0} \frac{1}{2\eps^2} D^2(F+\eps\dot{F},F) = \frac{1}{4} \partial^2_{F_1^2} D^2(F,F) [\dot{F},\dot{F}]
\end{align}
for $F \in GL_+(3)$, $\dot{F} \in \R^{3 \times 3}$. Here, $\partial^2_{F_1^2} D^2(F_1,F_2)$ denotes the Hessian of  $D^2$ in direction of $F_1$ at $(F_1,F_2)$, which is a fourth order tensor.  For some $c>0$ we suppose that $D$ satisfies 
\begin{align}\label{eq: assumptions-D}
(i) & \ \ D(F_1,F_2)> 0 \text{ if } F_1^\top F_1 \neq F_2^\top F_2,\notag \\
(ii) & \ \ D(F_1,F_2) = D(F_2,F_1),\\
(iii) & \ \ D(F_1,F_3) \le D(F_1,F_2) + D(F_2,F_3),\notag \\
(iv) & \ \ \text{$D(\cdot,\cdot)$ is $C^3$ in a neighborhood of $SO(3) \times SO(3)$},\notag 
\\
(v)& \ \ \text{Separate frame indifference: } D(Q_1F_1,Q_2F_2) = D(F_1,F_2)\notag \\
& \ \  \ \ \ \ \ \ \ \ \    \ \  \ \ \ \ \ \ \ \ \    \ \  \ \ \ \ \ \ \ \ \    \ \  \ \ \ \ \ \ \ \ \   \forall Q_1,Q_2 \in SO(3),  \ \forall F_1,F_2 \in GL_+(3),\notag\\ 
(vi) & \ \ \text{$D(F,\Id) \ge c\dist(F,SO(3))$  $\forall F \in \R^{3 \times 3}$ in a neighborhood of $SO(3)$}.\notag
\end{align}
Note that conditions (i),(iii) state that $D$ is a true distance when restricted to symmetric matrices   with nonnegative determinants. We cannot expect more due to the separate frame indifference (v). We also point out that (v) implies \eqref{R: frame indiff} as shown in \cite[Lemma 2.1]{MOS}. Note that in our model we do not require any conditions of polyconvexity  \cite{Ball:77} neither for $W$ nor for $D$.  One possible example of $D$ satisfying 
\eqref{eq: assumptions-D} might be $D(F_1,F_2)= \BBB |F_1^\top F_1-F_2^\top F_2|\EEE$.   This leads  to $R(F,\dot F)=|\BBB  F^\top \dot F + \dot F^\top F \EEE |^2/2$ \second which is a standard choice. \EEE  For further examples  we refer  to \cite[Section 2.3]{MOS}.

\smallskip

\textbf{Equations of viscoelasticity in 3D:} Following the study in \cite{lecumberry}, we define clamped boundary conditions as follows. We consider  functions $\hat{u} \in W^{2,\infty}(S;\R^2)$ and $\hat{v} \in W^{3,\infty}(S)$ which represent in-plane and out-of-plane boundary conditions, respectively. We introduce the set of admissible configurations by
\begin{align}\label{eq: nonlinear boundary conditions}
\mathscr{S}_h = \Big\{ y \in W^{2,p}(\Omega;\R^3): \ y(x',x_3) = \begin{pmatrix} x' \\ hx_3 \end{pmatrix} + \begin{pmatrix} h^2 \hat{u}(x') \\ h \hat{v}(x') \end{pmatrix} - & x_3\begin{pmatrix} h^2  (\nabla' \hat{v}(x'))^\top  \notag \\ 0 \end{pmatrix}  \\&
\text{for } x' \in \partial S, \ x_3 \in I \Big\},
\end{align} 
where $I=(-\frac{1}{2},\frac{1}{2})$. \PPP Following \cite{MFMK} we formulate the equations of viscoelasticity  for a nonsimple material involving the perturbation $P$ (cf.\ \eqref{assumptions-P}). \EEE We introduce a differential operator associated to  $P$. To this end, we recall the notation of the scaled gradients in \eqref{eq: scaled1}-\eqref{eq: scaled2}. For $i,j \EEE \in \lbrace 1,\ldots, 3\rbrace$, we denote by $(\partial_ZP(\nabla^2_h y))_{ij*}$ the vector-valued function  $((\partial_ZP(\nabla^2_h y))_{ijk})_{k=1,2,3}$. We also introduce the    scaled (distributional) divergence  ${\rm div}_h g$  for a function $g \in L^1(\Omega;\R^3)$ \BBB by \EEE ${\rm div}_h g = \partial_1 g_1+ \partial_2 g_2 + \frac{1}{h}\partial_3 g_3$. We define
\begin{align}\label{LP-def}
\big(\mathcal{L}^h_P(\nabla^2_h y)\big)_{ij} =  - {\rm div}_h (\partial_ZP(\nabla^2_h y))_{ij*}, \ \ \ \  i,j \EEE \in \lbrace 1,\ldots, 3\rbrace
\end{align}   
for $y \in \mathscr{S}_h$. Let $\beta_1,\beta_2>0$. The equations of nonlinear viscoelasticity  can  be written as 
\begin{align}\label{nonlinear equation}
\begin{cases} -  {\rm div}_h \Big( \partial_FW(\nabla_h y) + \BBB h^{\beta_1} \EEE\mathcal{L}^h_{P}(\nabla^2_h y)  + \partial_{\dot{F}}R(\nabla_h y,\partial_t \nabla_h y)  \Big) =  \BBB  h^{\beta_2} \EEE  fe_3  & \text{in } [0,\infty) \times \Omega \\
y(0,\cdot) = y^h_0 & \text{in } \Omega \\
y(t,\cdot) \in \mathscr{S}_h &\text{for } t\in [0,\infty)
\end{cases}
\end{align}
for some $y^h_0 \in \mathscr{S}_h$, where $\partial_FW(\nabla_h y)$  $+h^{\beta_1}\mathcal{L}^h_{P}(\nabla^2_h y)$ denotes the    \emph{first Piola-Kirchhoff stress tensor} and $\partial_{\dot{F}}R(\nabla_h y,\partial_t \nabla_h y)$ the \emph{viscous stress} with $R$ as introduced in \eqref{intro:R}. \BBB \second As no surface forces are applied, we implicitly assume  zero  Neumann boundary conditions for the stress and the hyperstress on $S\times \lbrace -1/2, 1/2\rbrace$, i.e., on the top and the bottom of the cylinder $\Omega$, see e.g.~\cite{SKTR} for a specific form of such conditions.  As \eqref{eq: nonlinear boundary conditions} prescribes only the values of the function but not of the derivative, on the lateral boundary there arise additional Neumann  conditions from the second deformation gradient. (We again refer to \cite{SKTR} for details.)  As we will see below, however, they do not affect the effective 2D plate model.  \rm \EEE    Suitable scalings  $h^{\beta_1}$ and $h^{\beta_2}$ related to $\mathcal{L}^h_{P}$ and the normal force $f$, respectively, will be discussed below. \PPP Note that $\eps = h^{\beta_1}$ in \eqref{eq:viscoel-nonsimple}.  \EEE

\BBB 
\textbf{Time-discrete solutions to \eqref{nonlinear equation}:}  The first  auxiliary \EEE goal of this paper is to show existence of   time-discrete solutions to \eqref{nonlinear equation} for small $h>0$.  For this, we introduce a functional $I^{\beta_1,\beta_2}_h:W^{2,p}(\Omega;\R^3)\to\R$ describing the elastic energy of the body \BBB by \EEE
\begin{align}\label{nonlinear energy}
\BBB I^{\beta_1,\beta_2}_h(y) \EEE = \int_\Omega W(\nabla_h y(x))\, dx + h^{\beta_1}\int_\Omega P(\nabla^2_h y(x)) \, dx - h^{\beta_2}\int_\Omega f(x) y_3(x) \, dx
\end{align}
for a  deformation $y: W^{2,p}(\Omega;\R^3) \to \R^3$. \BBB We note that the energy takes into account the different scalings of the terms in \eqref{nonlinear equation}.  

We \EEE use an approximation scheme solving suitable time-incremental minimization problems: consider a fixed time step $\tau >0$ and suppose that an initial datum \BBB $y_0^h \in \mathscr{S}_h$ is given. Set $Y^0_{h,\tau} = y_0^h$. \EEE   Whenever $Y^0_{h,\tau}, \ldots, Y^{n-1}_{h,\tau}$ are known, $Y^n_{h,\tau}$ is defined as (if existent)
\begin{align}\label{incremental}
Y^n_{h,\tau} = {\rm argmin}_{y \in\mathscr{S}_h  } \Phi_h(\tau, Y^{n-1}_{h,\tau}; y), \ \ \ \Phi_h(\tau,y_0; y_1):= \BBB I^{\beta_1,\beta_2}_h(y_1)  \EEE +  \frac{1}{2\tau} \mathcal{D}^2(y_0,y_1), 
\end{align}
where \BBB $\mathcal{D}$ denotes the \emph{global dissipation distance} between two deformations, defined by \EEE
\begin{align*}
 \mathcal{D}(y_0,y_1):= \Big(\int_{\Omega} D^2(\nabla_h y_0,\nabla_h y_1)\Big)^{1/2}.
\end{align*} 
Suppose that, for a choice of $\tau$, a sequence $(Y^n_{h,\tau})_{n \in \N}$ solving  \eqref{incremental} exists. We define the  piecewise constant interpolation by
\begin{align}\label{ds}
 \tilde{Y}_{h,\tau} \BBB(0,\cdot) \EEE = Y^0_{h,\tau}, \ \ \ \tilde{Y}_{h,\tau} \BBB (t,\cdot) \EEE = Y^n_{h,\tau}  \ \text{for} \ t \in ( (n-1)\tau,n\tau], \ n\ge 1. \EEE
\end{align}
In the following,  $\tilde{Y}_{h,\tau}$  will be called a \BBB \emph{time-discrete solution}. We often drop the
$x$-dependence and write $ \tilde{Y}_{h,\tau}(t)$ for a time-discrete solution at time $t$.  \EEE Note that the existence of such solutions is usually guaranteed by the direct method of the calculus of variations under suitable compactness, coercivity, and lower semicontinuity assumptions, \BBB see Proposition \ref{maintheorem1} and its proof.  We point out that, in the setting of general  (but not thin)  bodies   with suitable boundary conditions imposed on the entire $\partial \Omega$, it has been shown in \cite[Theorem 2.1]{MFMK} that time-discrete solutions indeed converge to weak solutions to the system \eqref{nonlinear equation} as $\tau \to 0$. In the present context, time-discrete solutions will be the starting point to pass to a two-dimensional, time-continuous framework.

\smallskip

\textbf{Scaling and displacement fields:} Due to the scaling of the boundary conditions in \eqref{eq: nonlinear boundary conditions}, the energy of \BBB time-discrete solutions   $\tilde{Y}_{h,\tau}$  \EEE  is expected to be small in terms of $h$. More specifically, in our setting it will turn out that the energy is of order $h^4$ which corresponds to the so-called \emph{von K\'arm\'an regime}. (For an exhaustive treatment of different scaling regimes we refer the reader to \cite{hierarchy}.) \BBB As $y_3$ scales like $h$, see \eqref{eq: nonlinear boundary conditions}, a suitable choice for the scaling of the forces in \eqref{nonlinear energy} is therefore $h^3$, i.e., we set $\beta_2 = 3$. Moreover, we choose  $\beta_1 = 4-p\alpha$ for some  $0< \alpha< 1$. On the one hand, this will imply that $\Vert \nabla^2_h  \tilde{Y}_{h,\tau} \Vert_{L^p(\Omega)}$ is small, more precisely, of order $h^\alpha$, cf.\ \eqref{assumptions-P}(iii). On the other hand, $\alpha < 1$ will ensure that the higher order perturbation will vanish in the effective 2D limiting model. 

Based on this discussion, we introduce the rescaled nonlinear energy $\phi_h: W^{2,p}(\Omega;\R^3) \to [0,\infty]$ by  
\begin{align}\label{nonlinear energy-rescale}
\phi_h(y) = h^{-4}I^{4-p\alpha,3}_h(y)  =  \frac{1}{h^4}\int_\Omega W(\nabla_h y(x))\, dx + \frac{1}{h^{\alpha p}} \int_\Omega P(\nabla^2_h y(x)) \, dx - \frac{1}{h}\int_\Omega f(x) y_3(x) \, dx
\end{align}
for $y \in \mathscr{S}_h$. Similarly, for $y_0,y_1 \in \mathscr{S}_h$,  the rescaled global dissipation distance is given by
\begin{align}\label{eq: D,D0-1} 
 \mathcal{D}_h(y_0,y_1) = h^{-2}\mathcal{D}(y_0,y_1)=  h^{-2}\Big(\int_\Omega D^2(\nabla_h y_0, \nabla_h y_1) \Big)^{1/2}.
\end{align}
\EEE  Following the discussion in \cite{hierarchy}, for $y \in \mathscr{S}_h$ we introduce the corresponding \PPP averaged, \EEE scaled in-plane and out-of-plane displacements which measure the deviation from the mapping $(x',0)$:
\begin{align}\label{eq: in/out plane} 
u(x')  := \frac{1}{h^2} \int_I \Big( \begin{pmatrix}
 y^h_1 \\  y^h_2  \end{pmatrix} (x',x_3) - \begin{pmatrix}
x_1\\ x_2 \end{pmatrix} \Big) \, dx_3, \ \ \ \ \   v(x') := \frac{1}{h} \int_I  y^h_3  (x',x_3)\, dx_3,
\end{align}
where again $I=(-\frac{1}{2},\frac{1}{2})$.  \BBB In a similar fashion, given a time-discrete solution $\tilde{Y}_{h,\tau}$, we introduce \second  averaged functions $\tilde{U}_{h,\tau}$ and $\tilde{V}_{h,\tau}$ dependent on  $(t,x')$. \EEE Via the minimizing movement scheme,  we will later see that along a sequence $(\tilde{Y}_{h,\tau}(t))_{h,\tau}$ we get  $\tilde{U}_{h,\tau}(t)  \rightharpoonup u(t)$ weakly in $W^{1,2}(S;\R^2)$ and $\tilde{V}_{h,\tau}(t) \to v(t)$ strongly in $W^{1,2}(S)$ with $v(t) \in W^{2,2}(S)$ for each $t \ge 0$, when $h,\tau \to 0$.  The \BBB main \EEE goal of our work is to understand which equations are solved by $(u(t),v(t))$.

 \smallskip

\textbf{Quadratic forms:}
To formulate the effective 2D problem for the scaled in-plane and out-of-plane displacements, we need to consider \BBB various quadratic forms. First, we define \EEE $Q_W^3:\R^{3 \times 3} \to \R$  by $Q_W^3(F) = \partial^2_{F^2} W(\Id)[F,F]$. One can show that it depends only on the symmetric part $\frac{1}{2}(F^\top + F)$ and that it is positive definite on \BBB $\R^{3 \times 3}_{\rm sym} = \lbrace A  \in \R^{3 \times 3}: A=A^\top \rbrace$, \EEE cf.\ Lemma \ref{D-lin}. We also introduce  $Q_W^2: \R^{2 \times 2} \to \R$ by
\begin{align}\label{eq:Q2}
Q_W^2(G) = \min_{a \in\R^3} Q_W^3(G^* + a \otimes e_3 + e_3 \otimes a )
\end{align}
for $G \in \R^{2 \times 2}$, where the entries of  $G^* \in \R^{3 \times 3}$ are given by $G^*_{ij} = G_{ij}$ for $i,j\in \lbrace 1,2\rbrace$ and zero otherwise. Note that \eqref{eq:Q2} corresponds to a minimization over stretches in the $e_3$ direction. We will assume that the minimum in \eqref{eq:Q2} is attained for $a=0$. Similarly, we define
\begin{align}\label{eq:Q22}
Q_D^3(F) =  \frac{1}{2}\partial^2_{F^2_1} D^2(\Id,\Id)[F,F],   \ \ \ \ \ \ \ Q_D^2(G) = \min_{a \in\R^3} Q_D^3(G^* + a \otimes e_3 + e_3 \otimes a ).
\end{align}
 (\second Notice that  $Q_D^3(F)=2R(\Id,F)$ with $R$ from  \eqref{intro:R}.\EEE)   We again assume that the minimum is attained for $a=0$.

The assumption that $a=0$ is a minimum in \eqref{eq:Q2}-\eqref{eq:Q22} corresponds to a model with  zero Poisson's ratio
in the $e_3$ direction. This assumption is not needed in the \PPP purely \EEE static analysis \cite{hierarchy, lecumberry}. In our setting, it is only needed in the proof of lower semicontinuity of slopes, see Theorem \ref{theorem: lsc-slope}. Dropping this assumption would lead to a considerably more involved limiting description which we do not want to pursue here.  We refer to  Remark \ref{rem: Poisson2} for some details in that direction.

We also introduce  corresponding   symmetric   fourth order tensors $\C^d_W$ and $\C^d_D$, $d=2,3$,  satisfying
\begin{align}\label{eq: order4}
Q_W^3(F) = \C^3_W[F,F] \ \ \forall F \in \R^{3\times 3}, \ \ \ \ \ \ \ Q_W^2(G) = \C^2_W[G,G] \ \ \forall G \in \R^{2 \times 2}
\end{align}
and likewise for $\C_D^d$, $d=2,3$.

\smallskip

 \textbf{Equations of viscoelasticity in 2D:}
 We now  present  the effective 2D equations. By recalling definition \eqref{eq: in/out plane} and the boundary conditions in the 3D setting \eqref{eq: nonlinear boundary conditions},   we first introduce the relevant space by 
\begin{align}\label{eq: BClinear}
{\mathscr{S}}_0 = \lbrace (u,v) \in W^{1,2}(S;\R^2) \times W^{2,2}(S): \ u = \hat{u}, \ v = \hat{v}, \BBB \ \nabla' v = \nabla' \hat{v} \EEE \ \text{ on } \partial S \rbrace.
\end{align}
\second  The clamped boundary conditions correspond to the ones considered in \cite{lecumberry}. We emphasize that Neumann conditions in \eqref{nonlinear equation} on the lateral boundary arising from the second deformation gradient do not affect the  plate equations \eqref{eq: equation-simp} below since  our scaling makes the second gradient  vanish as the thickness of the domain tends to zero. \rm \EEE

\PPP Given $(u_0,v_0) \in \mathscr{S}_0$, \EEE we consider the equations  \NNN
\begin{align}\label{eq: equation-simp}
\begin{cases}  {\rm div}_2\Big(\C^2_W\big( e(u)  + \frac{1}{2} \nabla' v \otimes \nabla' v  \big) +  \C^2_D \big( e(\partial_t u) +   \nabla' \partial_t v   \odot \nabla' v \big) \Big)   = 0, &\vspace{0.1cm} \\ 
-{\rm div}_2\Big(\Big(\C^2_W\big( e(u)  + \frac{1}{2} \nabla' v \otimes \nabla' v  \big)  +  \C^2_D \big( e(\partial_t u) +   \nabla' \partial_t v   \odot \nabla' v \big) \Big) \nabla' v \Big)  &\vspace{0.1cm}\\
 \quad\quad\quad\quad\quad\quad\quad \quad \ \, \quad \ \  \quad  + \tfrac{1}{12}  {\rm div}_2 \, {\rm div}_2\Big( \C^2_W (\nabla')^2 v + \C^2_D (\nabla')^2 \partial_t v \Big)   =   f  &   \text{in } [0,\infty) \times S \\
u(0,\cdot) = u_0, \  v(0,\cdot) = v_0 &   \text{in } S \\
(u(t,\cdot), v(t,\cdot)) \in {\mathscr{S}}_0 & \hspace{-0.1cm} \text{ for } t\in [0,\infty)
\end{cases}
\end{align}
\EEE where $\C^2_W$ and  $\C^2_D$ are defined in \eqref{eq: order4}, \BBB and $\odot$ denotes the symmetrized tensor product. \EEE Note that the frame indifference of the energy and the dissipation (see \eqref{assumptions-W}(ii) and \eqref{eq: assumptions-D}(v), respectively) imply that the contributions only depend on the symmetric part of the strain $e(u) := \frac{1}{2}(  \nabla' u  +(\nabla' u)^\top)$ and the strain rate  $e(\partial_t u) := \frac{1}{2}( \partial_t \nabla' u + \partial_t (\nabla' u)^\top)$. Here, ${\rm div}_2$ denotes the  distributional   divergence in dimension two. 

We also say that $(u,v) \in W^{1,2}([0,\infty);{\mathscr{S}}_0)$ is a \emph{weak solution} of \eqref{eq: equation-simp}  if $u(0,\cdot) = u_0$, $v(0,\cdot) = v_0$ and for a.e.\ $t \ge 0$ we have \NNN
 \begin{subequations}\label{eq: weak equation}
\begin{align}
& \int_S \Big(\C^2_W\big( e(u)  + \tfrac{1}{2} \nabla' v \otimes \nabla' v  \big)  +  \C^2_D \big( e(\partial_t u ) +  \nabla' \partial_t v   \odot  \nabla' v \big) \Big) : \nabla' \varphi_u    = 0,\label{eq: weak equation1} \\
& \int_S \Big(\C^2_W\big( e(u)  + \tfrac{1}{2} \nabla' v \otimes \nabla' v  \big)  +  \C^2_D \big( e(\partial_t u ) +  \nabla' \partial_t v  \odot\nabla' v \big) \Big) : \big(\nabla' v  \odot  \nabla' \varphi_v  \big) \notag \\
& \quad\quad\quad\quad\quad\quad\quad \ \  \ \  \, \,\quad  + \frac{1}{12}  \int_S \Big(\C^2_W (\nabla')^2 v + \C^2_D (\nabla')^2 \partial_t v \Big) : (\nabla')^2 \varphi_v = \int_S f\varphi_v, \label{eq: weak equation2} 
\end{align}
\end{subequations} 
\EEE for all $\varphi_u \in W^{1,2}_0(S;\R^2)$ and  $\varphi_v \in W^{2,2}_0(S)$. Note that \eqref{eq: weak equation1} corresponds \second to two scalar equations \EEE and \eqref{eq: weak equation2} corresponds to one \second  scalar \rm \EEE equation, respectively.  Our goal will be to show that \BBB time-discrete solutions to \eqref{nonlinear equation}, as introduced in \eqref{ds}, \EEE converge to \PPP weak \EEE solutions to \eqref{eq: equation-simp} in a suitable sense.

  We also mention that the  equations   are related to the \emph{von K\'arm\'an} functional
\begin{align}\label{eq: phi0}
{\phi}_0(u,v) := \int_S \frac{1}{2}Q_W^2\Big( e(u) + \frac{1}{2} \nabla'v \otimes \nabla'v \Big) + \frac{1}{24}Q_W^2((\nabla')^2 v) - \int_S f v
\end{align}
for $(u,v) \in {\mathscr{S}}_0$: \second actually, \EEE we will also see  that ${\phi}_0$ can be related to $\phi_h$ (see \eqref{nonlinear energy-rescale}) in the sense of $\Gamma$-convergence, cf.\ Section \ref{sec: gamma} below. A similar relation holds \BBB for $\mathcal{D}_h$, introduced in \eqref{eq: D,D0-1}, and the \emph{global dissipation distance} in the  2D setting, \EEE defined by

 \begin{align}\label{eq: D,D0-2} 
  {\mathcal{D}}_0( (u_0,v_0),(u_1,v_1)) & := \Big(\int_S Q^2_D\Big( e( u_1) - e(u_0) + \frac{1}{2} \nabla'v_1 \otimes \nabla'v_1 - \frac{1}{2} \nabla'v_0 \otimes \nabla'v_0 \Big) \notag \\
  & \ \ \ \ \  +  \frac{1}{12}   Q_D^2\big((\nabla')^2 v_1 - (\nabla')^2 v_0 \big)  \Big)^{1/2}
\end{align}
for  $(u_0, v_0), (u_1,v_1) \in {\mathscr{S}}_0$.

\subsection{Main results}
Define the sublevel sets $\mathscr{S}_h^M := \lbrace y\in \mathscr{S}_h: \phi_h(y) \le M\rbrace$. Our general strategy will be to show that the spaces $(\mathscr{S}_h^M, \mathcal{D}_h)$ and $({\mathscr{S}}_0, {\mathcal{D}}_0)$ are complete metric spaces (see Lemma \ref{th: metric space} and Lemma \ref{th: metric space-lin} below) and to follow the \second methodology developed  in \cite{AGS}. \EEE 

To show existence of solutions to the equations,  we will apply the theory of \cite{AGS} about  curves of maximal slope. By using the property that in Hilbert spaces curves of maximal slope can be related to gradient flows, we then find weak solutions to    \eqref{eq: equation-simp}.  \BBB To understand the relation of \EEE solutions in the 3D and 2D setting, we will employ an abstract convergence result for curves of maximal slope  and \BBB  their approximation  via the minimizing movement scheme, see \cite{Ortner, S2}. The relevant results about curves of maximal slope are recalled in Section   \ref{sec3}. \EEE

Our first main result addresses the existence of \BBB time-discrete \EEE solutions to the 3D problem.

\BBB 

\begin{proposition}[Time-discrete solutions in the 3D setting]\label{maintheorem1}
Let $M>0$ and $\mathscr{S}_h^M = \lbrace y \in \mathscr{S}_h: \phi_h(y) \le M\rbrace$. Let $\beta_1 = 4-\alpha p$ and $\beta_2 = 3$.  Let $y_0^h \in \mathscr{S}_h^M$. Then, for $h>0$ sufficiently small only depending on $M$, the  sequence of minimization problems in \eqref{incremental} has a solution, and gives rise to a time-discrete solution $\tilde{Y}_{h,\tau}$ with $\tilde{Y}_{h,\tau}(0) = y_0^h$.
\end{proposition}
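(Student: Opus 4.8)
The statement is an existence result for the time-incremental minimization scheme \eqref{incremental}, so the natural route is the direct method of the calculus of variations applied inductively over the time steps $n$. The plan is to show that for each $n$ the functional $y \mapsto \Phi_h(\tau, Y^{n-1}_{h,\tau}; y) = \phi_h(y) + \frac{1}{2\tau}\mathcal{D}_h^2(Y^{n-1}_{h,\tau}, y)$ (suitably unscaled, but scaling is irrelevant for the argument) admits a minimizer over the admissible set $\mathscr{S}_h$. One first observes that the set is nonempty (it contains $Y^{n-1}_{h,\tau}$ by the induction hypothesis, provided we check $Y^{n-1}_{h,\tau}\in\mathscr{S}_h$, which holds since the constraint $y(t,\cdot)\in\mathscr{S}_h$ is built into \eqref{incremental}) and that the infimum is finite, since evaluating at $y = Y^{n-1}_{h,\tau}$ gives the finite value $\phi_h(Y^{n-1}_{h,\tau})$.

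\smallskip

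\textbf{Key steps.} First I would establish \emph{coercivity}: along a minimizing sequence $(y_k)_k \subset \mathscr{S}_h$, the bound $\Phi_h(\tau, Y^{n-1}_{h,\tau}; y_k) \le C$ together with the growth condition \eqref{assumptions-P}(iii), namely $P(Z)\ge c_1|Z|^p$, controls $\Vert \nabla^2_h y_k\Vert_{L^p(\Omega)}$; combined with the clamped boundary conditions in \eqref{eq: nonlinear boundary conditions} (which fix the trace of $y_k$ on $\partial S \times I$) and the generalized Poincar\'e inequality, this yields a uniform bound on $\Vert y_k \Vert_{W^{2,p}(\Omega;\R^3)}$. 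Here one must absorb the force term $-h^{\beta_2}\int_\Omega f y_{3}$ using $f\in L^\infty$ and Young's inequality, and discard the dissipation term since $\mathcal{D}^2 \ge 0$. Since $p>3$, reflexivity of $W^{2,p}$ gives a subsequence with $y_k \weakly y$ in $W^{2,p}(\Omega;\R^3)$, and by Rellich--Kondrachov $y_k \to y$ in $C^1(\bar\Omega;\R^3)$ (so $\nabla_h y_k \to \nabla_h y$ uniformly), and the boundary conditions pass to the limit, i.e.\ $y\in\mathscr{S}_h$. Second, \emph{lower semicontinuity}: the term $\int_\Omega P(\nabla^2_h y)$ is weakly lower semicontinuous because $P$ is convex (\eqref{assumptions-P}(ii)); the term $\int_\Omega W(\nabla_h y)$ is continuous along the sequence since $\nabla_h y_k \to \nabla_h y$ uniformly and $W$ is continuous (\eqref{assumptions-W}(i)); the force term is linear and continuous under strong $L^1$ convergence of $y_{3}$; and the dissipation term $\mathcal{D}^2(Y^{n-1}_{h,\tau}, y_k) = \int_\Omega D^2(\nabla_h Y^{n-1}_{h,\tau}, \nabla_h y_k)$ converges by continuity of $D$ (\eqref{eq: assumptions-D}(iv)) together with the uniform convergence of $\nabla_h y_k$, provided the gradients stay in a region where $D$ is defined and continuous — which, by the energy bound and \eqref{assumptions-W}(iii), they do, at least for $h$ small. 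Hence $\Phi_h(\tau, Y^{n-1}_{h,\tau}; \cdot)$ attains its infimum at $y$, which we set to be $Y^n_{h,\tau}$. Iterating over $n\in\N$ and defining the piecewise constant interpolant as in \eqref{ds} produces the asserted time-discrete solution $\tilde Y_{h,\tau}$ with $\tilde Y_{h,\tau}(0) = y_0^h$.

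\smallskip

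\textbf{Main obstacle.} The delicate point is ensuring that the scaled gradients $\nabla_h y_k$ of a minimizing sequence remain, uniformly in $k$ (and for $h$ small), in a neighbourhood of $SO(3)$ where $W$ and $D$ are $C^3$ and where the coercivity estimate $W(F)\ge c\,\dist^2(F, SO(3))$ interacts well with the second-gradient control — in other words, deriving an $L^\infty$ bound on $\nabla_h y_k$ that is \emph{compatible with $W$ possibly taking the value $+\infty$} away from a neighbourhood of $SO(3)$. This is exactly where the smallness of $h$ (depending only on $M$) and the interpolation between the $W^{2,p}$-bound on $\nabla^2_h y_k$ and the $L^2$-type control of $\dist(\nabla_h y_k, SO(3))$ coming from the energy bound $\phi_h(y_k)\le M$ enter; a rigidity/interpolation argument in the spirit of \cite{FrieseckeJamesMueller:02, hierarchy} shows $\Vert \dist(\nabla_h y_k, SO(3))\Vert_{L^\infty}$ is small, placing the gradients in the good region. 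Once this is in hand, the remaining lower semicontinuity and compactness arguments are routine. I expect this $L^\infty$-proximity-to-$SO(3)$ step to be the technical heart of the proof, with everything else following the standard template of \cite{MFMK, MOS}.
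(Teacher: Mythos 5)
Your proposal is correct and follows essentially the same route as the paper: the paper's proof restricts the incremental minimization to the sublevel set $\mathscr{S}_h^M$ (using that energies are nonincreasing along the scheme) and then invokes Lemma \ref{th: metric space}(ii)--(iv), whose proofs contain exactly the steps you spell out — a $W^{2,p}$ bound from \eqref{assumptions-P}(iii) and the boundary conditions, compactness into strong $W^{1,\infty}$ since $p>3$, convexity of $P$ plus continuity of $W$ and $D$ along uniformly convergent gradients, and the $L^\infty$-proximity of $\nabla_h y$ to $\Id$ from Lemma \ref{lemma:rigidity}(v), which is precisely the rigidity/interpolation step you identify as the technical heart. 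No gaps.
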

\EEE
Note that  the existence  of weak  \BBB (time-continuous) \EEE solutions to \eqref{nonlinear equation}  has been addressed in \cite{MFMK} for \BBB bulk materials. \EEE However, due to the \BBB thinness of the domain   representing the body \BBB and the imposed boundary conditions,   the results obtained there  are not applicable. \PPP This is  due to the fact that specific \EEE constants depend on the domain and \PPP blow up \EEE for vanishing  thickness. Therefore,  here we only prove existence of time-discrete solutions. \EEE 

 For the main definitions and notation for curves of maximal slope and strong upper gradients we refer to Section \ref{sec: defs}. In particular, we write $|\partial {\phi}_0|_{{\mathcal{D}}_0}$ for the (local) slopes, see Definition \ref{main def2}.  For the 2D problem we obtain the following results. 

\begin{theorem}[Solutions in the 2D setting]\label{maintheorem2}
The limiting 2D problem has the following properties: 

(i)  (Curves of maximal slope) \EEE For all $(u_0,v_0) \in {\mathscr{S}}_0$  there exists a curve of maximal slope $(u,v):[0,\infty) \to {\mathscr{S}}_0$ for ${\phi}_0$   with respect to the strong upper gradient $|\partial {\phi}_0|_{{\mathcal{D}}_0}$ satisfying $(u,v)(0)=(u_0,v_0)$. 
  
  (ii)  (Relation to PDE)  For all $(u_0,v_0) \in {\mathscr{S}}_0$,  each  curve of maximal slope $(u,v) :[0,\infty) \to {\mathscr{S}}_0$ with $(u,v)(0)=(u_0,v_0)$  is a weak solution  to the partial differential equations \eqref{eq: equation-simp} in the sense of \eqref{eq: weak equation}.

\end{theorem}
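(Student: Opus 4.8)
The plan is to obtain both statements as consequences of the abstract theory of curves of maximal slope in complete metric spaces (as recalled in Section \ref{sec3}), specialized to the Hilbert-type structure of the limiting 2D problem. For part (i), the first step is to verify that $({\mathscr{S}}_0, {\mathcal{D}}_0)$ is a complete metric space (Lemma \ref{th: metric space-lin}), that ${\phi}_0$ is lower semicontinuous on sublevel sets with respect to ${\mathcal{D}}_0$-convergence (equivalently, weak $W^{1,2}\times W^{2,2}$-convergence on bounded sets), and that ${\phi}_0$ is bounded below and has compact sublevel sets — all of which follow from the quadratic, positive-definite nature of $Q_W^2$ and $Q_D^2$ together with Poincaré-type inequalities using the clamped boundary conditions in ${\mathscr{S}}_0$. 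The second step is to confirm that $|\partial{\phi}_0|_{{\mathcal{D}}_0}$ is a strong upper gradient for ${\phi}_0$; here I would invoke the generalized convexity properties of ${\phi}_0$ along ${\mathcal{D}}_0$-geodesics established in Section \ref{sec:energy-dissipation} (the representation in Lemma \ref{lemma: slopes}), which guarantees that the slope is lower semicontinuous and that the chain-rule inequality along absolutely continuous curves holds. Granted these ingredients, the existence of a curve of maximal slope emanating from any $(u_0,v_0)\in{\mathscr{S}}_0$ follows from the minimizing-movement (JKO) scheme of \cite{AGS}: the discrete interpolants are relatively compact, and their limit is a curve of maximal slope.

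For part (ii), the key observation is that the metric derivative and the local slope can be computed explicitly because ${\mathcal{D}}_0$ is induced by the (degenerate) quadratic form $Q_D^2$ on the affine space ${\mathscr{S}}_0$. Concretely, for an absolutely continuous curve $(u,v)$ one has $|(u,v)'|^2_{{\mathcal{D}}_0}(t) = \int_S Q_D^2\big(e(\partial_t u) + \nabla'\partial_t v \odot \nabla' v\big) + \tfrac{1}{12}Q_D^2((\nabla')^2\partial_t v)$, using the chain rule for the membrane strain $e(u)+\tfrac12\nabla'v\otimes\nabla'v$. On the other side, differentiating ${\phi}_0$ along admissible variations $(u,v)+s(\varphi_u,\varphi_v)$ with $(\varphi_u,\varphi_v)\in W^{1,2}_0\times W^{2,2}_0$ and using the representation of Lemma \ref{lemma: slopes}, the slope $|\partial{\phi}_0|_{{\mathcal{D}}_0}(u,v)$ equals the dual norm of the Gâteaux differential $D{\phi}_0(u,v)$ with respect to the $Q_D^2$-inner product. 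The maximal-slope (energy-dissipation) equality $\tfrac{d}{dt}{\phi}_0(u(t),v(t)) = -\tfrac12|(u,v)'|^2_{{\mathcal{D}}_0}(t) - \tfrac12|\partial{\phi}_0|^2_{{\mathcal{D}}_0}(u(t),v(t))$ then forces, by the Cauchy–Schwarz case of equality, that the velocity is the negative $Q_D^2$-gradient of ${\phi}_0$, i.e.\ $(u,v)$ satisfies $\C^2_D$-weighted gradient-flow equations. Unpacking the Euler–Lagrange form of $D{\phi}_0(u,v)$ — computing the first variation of the membrane energy $\tfrac12 Q_W^2(e(u)+\tfrac12\nabla'v\otimes\nabla'v)$ and the bending energy $\tfrac{1}{24}Q_W^2((\nabla')^2 v)$ — and testing against $\varphi_u$ and $\varphi_v$ separately yields precisely the weak formulation \eqref{eq: weak equation1}–\eqref{eq: weak equation2}.

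I would organize the second part as: (a) compute the metric derivative of ${\mathcal{D}}_0$-absolutely continuous curves; (b) compute the slope $|\partial{\phi}_0|_{{\mathcal{D}}_0}$ via Lemma \ref{lemma: slopes} and identify it with a dual norm; (c) use the definition of curve of maximal slope plus the chain rule to derive the pointwise identity velocity $= -\nabla_{Q_D^2}{\phi}_0$; (d) translate this identity into the weak PDE by taking first variations. The main obstacle is step (b)–(c): because ${\mathcal{D}}_0$ is built from a form that is \emph{degenerate} (it does not see $e(u)$ and $\nabla'v$ independently, only the combined membrane strain and the Hessian of $v$) and because ${\phi}_0$ is genuinely nonconvex, one must be careful that the slope is still represented by the naive directional-derivative formula and that the equality case in the energy-dissipation balance can indeed be inverted. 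This is exactly what the generalized-convexity representation of Lemma \ref{lemma: slopes} is designed to supply, so the argument hinges on applying it correctly; the remaining computations — the first variations of the membrane and bending terms — are routine and I would not spell them out in full.
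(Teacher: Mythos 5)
Your plan for part (ii) is essentially the paper's argument: the paper first proves the fine representation of Lemma \ref{lemma: lin-slope}, which is exactly your step (b) --- the slope equals $\Vert\sqrt{\C^2_D}^{-1}(\C^2_W G(u,v)+\mathcal{L}(u,v))\Vert_{L^2(\Omega)}$, where the auxiliary operator $\mathcal{L}(u,v)$, obtained from a coercive quadratic minimization over test functions and orthogonal to all admissible strain variations $H(\varphi_u,\varphi_v\,|\,v)$, is precisely the device that handles the degeneracy you point out (the dissipation only sees the combined membrane strain and the Hessian of $v$). The paper then establishes $(u,v)\in W^{1,2}([0,\infty);W^{1,2}\times W^{2,2})$, bounds the metric derivative from below by $\Vert\sqrt{\C^2_D}H(\partial_t u,\partial_t v\,|\,v)\Vert_{L^2(\Omega)}$, bounds $\tfrac{d}{dt}\phi_0$ from below by the pairing of $\C^2_W G+\mathcal{L}$ with $H(\partial_t u,\partial_t v\,|\,v)$, and forces equality in the Young-inequality chain, exactly your steps (a), (c), (d). So for (ii) the only substantive items you defer are carried out in Lemma \ref{lemma: lin-slope} and the computations of Section \ref{sec results}; this part of the proposal is sound.

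For part (i), however, your route genuinely diverges from the paper and contains a gap. The paper does not run a 2D minimizing-movement scheme at all: it obtains (i) as an immediate corollary of Theorem \ref{maintheorem3}, i.e.\ curves of maximal slope in 2D are produced as $\pi\sigma$-limits of the 3D time-discrete solutions of Proposition \ref{maintheorem1}, with the hard analytic work (lower semicontinuity of slopes) done in Theorem \ref{theorem: lsc-slope}. Your direct approach is viable in principle (a version is carried out in \cite{MFMKJV}), but as stated it rests on the claim that $\phi_0$ "has compact sublevel sets", which is false for the topology induced by $\mathcal{D}_0$: by Lemma \ref{th: metric space-lin}(iii), $\mathcal{D}_0$ is equivalent to the \emph{strong} $W^{1,2}\times W^{2,2}$ norm, so sublevel sets are merely bounded. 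To get compactness one must introduce the weak topology as the auxiliary topology $\sigma$ of \cite{AGS}, and then the abstract existence theorem additionally requires lower semicontinuity of $|\partial\phi_0|_{\mathcal{D}_0}$ along weakly converging sequences. This does not follow softly from the representation in Lemma \ref{lemma: slopes}: there, $\mathcal{D}_0(\cdot,(u',v'))$ appears in the denominator and, composed with the increasing function $\Phi^2_M$, with a negative sign in the numerator, so the weak lower semicontinuity of $\mathcal{D}_0$ points in the wrong direction for both terms. Closing this gap requires a dedicated argument (of the same nature as Theorem \ref{theorem: lsc-slope}), not an off-the-shelf citation; alternatively one must note, as the paper does, that (i) is already contained in the 3D-to-2D convergence statement.
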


\first 
We  mention that in \cite[Theorem 4.1]{MFMKJV} we provide a slightly different proof of Theorem \ref{maintheorem2}(i), without using the fact that the 2D model is the limit of the time-discrete 3D model. However, the proof still heavily relies on the properties of $\phi_0$, ${\mathcal{D}}_0$, and $|\partial {\phi}_0|_{{\mathcal{D}}_0}$ derived in the present paper. \rm \EEE

Finally, we study the relation of  \BBB time-discrete solutions \eqref{ds} and \PPP weak \EEE solutions to the equations \eqref{eq: equation-simp}. To this end, we need to specify the topology of the convergence. Given $y^h \in \mathscr{S}_h$ and $(u,v) \in {\mathscr{S}}_0$, we say $y^h \stackrel{\pi\sigma}{\to} (u,v)$ as $h \to 0$ if the corresponding \second averaged and scaled \EEE displacements fields defined in  \eqref{eq: in/out plane}, denoted by $u^h$ and $v^h$,  satisfy $u^h \rightharpoonup u$ weakly in $W^{1,2}(S;\R^2)$ and $v^h \to v$ strongly in $W^{1,2}(S)$. (The symbol $\pi\sigma$ is  used because of  the abstract convergence result, see Section \ref{sec: auxi-proofs}.)

\second 
\begin{theorem}[Relation between 3D and 2D systems]\label{maintheorem3}
Let $\beta_1 = 4-p\alpha$ and $\beta_2 = 3$. Let $(u_0,v_0) \in {\mathscr{S}}_0$ be an initial datum. \\
\noindent
(i) Then, for  the family of sequences of initial data we have
\begin{align}\label{eq:datasequence}
\mathcal{B}(u_0,v_0) = \big\{ (y^h_0)_h: \ y^h_0 \in \mathscr{S}_h, \ y^h_0 \stackrel{\pi\sigma}{\to} (u_0,v_0), \ \phi_h(y^h_0) \to {\phi}_0((u_0,v_0)) \big\}\ne \emptyset.
\end{align}

\noindent
(ii) We consider a sequence $(y^h_0)_h \in \mathcal{B}(u_0,v_0)$,  a null sequence $(\tau_h)_h$, and a sequence of time-discrete solutions $\tilde{Y}_{h,\tau_h}$ as in \eqref{ds}  with  $\tilde{Y}_{h,\tau_h}(0)=y^h_0$. \\ 
\noindent Then, there exists a curve of maximal slope $(u,v): [0,\infty) \to {\mathscr{S}}_0$ for ${\phi}_0$  with respect to  $|\partial {\phi}_0|_{{\mathcal{D}}_0}$ satisfying $u(0) = u_0$ and $v(0) = v_0$ such that up to a subsequence (not relabeled) there holds 
$$\tilde{Y}_{h,\tau_h}(t) \stackrel{\pi\sigma}{\to} (u(t),v(t)), \ \ \ \ \ \ \phi_h(\tilde{Y}_{h,\tau_h}(t)) \to {\phi}_0(u(t),v(t))  \ \ \ \ \text{for all} \ t \in [0,\infty) \ \  \text{ as $h \to 0$}.$$
\end{theorem}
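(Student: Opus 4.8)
\emph{Proof proposal.}
The plan is to cast Theorem~\ref{maintheorem3} as an instance of the abstract convergence result for sequences of minimizing movements and curves of maximal slope in the spirit of Sandier--Serfaty \cite{S1,S2} and Ortner \cite{Ortner}, recalled in Section~\ref{sec3}. Concretely, the three-dimensional problems \eqref{incremental}--\eqref{ds} are viewed, after the rescaling $\phi_h = h^{-4}I_h^{4-p\alpha,3}$, $\mathcal{D}_h = h^{-2}\mathcal{D}$, as minimizing movements of $\phi_h$ in the complete metric spaces $(\mathscr{S}_h^M,\mathcal{D}_h)$, and the two-dimensional problem as a curve of maximal slope of $\phi_0$ in $(\mathscr{S}_0,\mathcal{D}_0)$; completeness of both is provided by Lemma~\ref{th: metric space} and Lemma~\ref{th: metric space-lin}. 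The hypotheses of the abstract theorem that have to be verified are: (a) a uniform lower bound on $\phi_h$, a uniform energy bound along the discrete trajectories, and a uniform a priori dissipation estimate; (b) $\pi\sigma$-compactness of energy sublevel sets together with a Helly-type selection principle; (c) the $\Gamma$-$\liminf$ inequality for the energies, $\liminf_{h\to 0}\phi_h(y^h)\ge\phi_0(u,v)$ whenever $y^h\stackrel{\pi\sigma}{\to}(u,v)$, from Theorem~\ref{th: Gamma}; (d) the sequential lower semicontinuity of the dissipation distances with respect to $\pi\sigma$-convergence, established in Section~\ref{Sec:relation2D3D}; and (e) the sequential lower semicontinuity of the local slopes, $\liminf_{h\to 0}|\partial\phi_h|_{\mathcal{D}_h}(y^h)\ge|\partial\phi_0|_{\mathcal{D}_0}(u,v)$ along bounded-energy $\pi\sigma$-convergent sequences, which is exactly Theorem~\ref{theorem: lsc-slope}.

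\emph{Part (i).} Non-emptiness of $\mathcal{B}(u_0,v_0)$ follows from the recovery-sequence ($\Gamma$-$\limsup$) part of Theorem~\ref{th: Gamma}: for $(u_0,v_0)\in\mathscr{S}_0$ one obtains $y_0^h\in\mathscr{S}_h$ with $y_0^h\stackrel{\pi\sigma}{\to}(u_0,v_0)$ and $\phi_h(y_0^h)\to\phi_0(u_0,v_0)$. The only point to check is that the standard ansatz (the map $(x',0)$ plus $h^2$-scaled in-plane and $h$-scaled out-of-plane correctors, together with an affine-in-$x_3$ correction and the optimal stretch in the $e_3$ direction) respects the clamped boundary conditions defining $\mathscr{S}_h$; this holds because $\hat u,\hat v$ enter \eqref{eq: nonlinear boundary conditions} and \eqref{eq: BClinear} in precisely the matching way.

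\emph{Part (ii).} Fix $(y_0^h)_h\in\mathcal{B}(u_0,v_0)$ and a null sequence $(\tau_h)_h$. By Proposition~\ref{maintheorem1}, for $h$ small the scheme \eqref{incremental} has a solution, and the telescoping energy-dissipation estimate for minimizing movements gives, for every $t\ge 0$,
\begin{align*}
\phi_h(\tilde Y_{h,\tau_h}(t)) + \frac{1}{2}\sum_{k}\frac{1}{\tau_h}\mathcal{D}_h^2\big(Y^{k-1}_{h,\tau_h},Y^k_{h,\tau_h}\big) \le \phi_h(y_0^h)\le M,
\end{align*}
so that all $\tilde Y_{h,\tau_h}(t)$ stay in a fixed sublevel set $\mathscr{S}_h^M$, and the interpolants obey the discrete Hölder estimate $\mathcal{D}_h(\tilde Y_{h,\tau_h}(s),\tilde Y_{h,\tau_h}(t))\le C(|t-s|^{1/2}+\tau_h^{1/2})$ for $s\le t$. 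The compactness results of Section~\ref{Sec:relation2D3D} (which rest on the quantitative rigidity estimate \cite{FrieseckeJamesMueller:02} combined with the second-gradient bound $\Vert\nabla^2_h\tilde Y_{h,\tau_h}\Vert_{L^p(\Omega)}\le Ch^{\alpha}$ coming from $\beta_1=4-p\alpha$ and \eqref{assumptions-P}(iii)) show $\mathscr{S}_h^M$ is $\pi\sigma$-precompact; together with the equicontinuity estimate and a refined Ascoli--Arzel\`a/Helly selection as in \cite{AGS,S2}, we extract a (non-relabeled) subsequence with $\tilde Y_{h,\tau_h}(t)\stackrel{\pi\sigma}{\to}(u(t),v(t))$ for all $t\ge 0$, where $(u,v)\in W^{1,2}_{\mathrm{loc}}([0,\infty);\mathscr{S}_0)$, $u(0)=u_0$, $v(0)=v_0$. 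Feeding (c)--(e) into the abstract convergence theorem of Section~\ref{sec3} then yields that $(u,v)$ is a curve of maximal slope for $\phi_0$ with respect to $|\partial\phi_0|_{\mathcal{D}_0}$, and — as part of the conclusion of that result, not an additional argument — that $\phi_h(\tilde Y_{h,\tau_h}(t))\to\phi_0(u(t),v(t))$ for every $t\ge 0$. By Theorem~\ref{maintheorem2}(ii) this $(u,v)$ is moreover a weak solution of \eqref{eq: equation-simp}.

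\emph{Main obstacle.} The genuinely hard ingredient, the lower semicontinuity of the slopes (e), is isolated in Theorem~\ref{theorem: lsc-slope} and may be invoked here. Within the present proof the delicate points are instead the simultaneous passage $h\to 0$, $\tau_h\to 0$ — one must check that the interpolation defect $\tau_h^{1/2}$ is harmless and that the Helly selection is compatible with the fact that the underlying metrics $\mathcal{D}_h$ themselves vary with $h$ and converge to $\mathcal{D}_0$ only in the $\pi\sigma$-sense — together with the bookkeeping required to ensure that the $\pi\sigma$-topology is simultaneously strong enough for the lower-semicontinuity inequalities (c)--(e) and weak enough for the uniform compactness. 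These are exactly the ``additional conditions needed to ensure convergence of gradient flows'' referred to in the introduction.
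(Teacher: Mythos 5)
Your proposal is correct and follows essentially the same route as the paper: both parts reduce to the abstract convergence result of Theorem \ref{th:abstract convergence 2}, with its hypotheses supplied by Theorem \ref{th: Gamma} (recovery sequences and $\Gamma$-liminf), Theorem \ref{th: lscD}, Lemma \ref{lemma: compacti}, and Theorem \ref{theorem: lsc-slope}, so the telescoping estimate, discrete H\"older bound, and Helly selection that you rederive are already internal to the cited abstract result and need not be repeated. The one hypothesis you do not address is that $|\partial \phi_0|_{\mathcal{D}_0}$ is a \emph{strong} upper gradient for $\phi_0$, which Theorem \ref{th:abstract convergence 2} explicitly requires and which the paper supplies via Lemma \ref{lemma: slopes} (resting on the generalized convexity of Lemma \ref{th: convexity2}); you should cite this, along with the easy verification of \eqref{eq: abstract assumptions1}(i) via Lemma \ref{th: metric space-lin}(iii).
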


\EEE

\second  We point out that \eqref{eq:datasequence} corresponds to the existence of recovery sequences for the static problem. We note that the existence of the time-discrete solutions $\tilde{Y}_{h,\tau_h}$ in (ii) is guaranteed by Proposition \ref{maintheorem1}. Item (ii) shows the convergence of time-discrete solutions to   \eqref{nonlinear equation} to (time-continuous) weak solutions to \eqref{eq: equation-simp}. Moreover, we also have convergence of the energies.  \rm \EEE From now on we set $f \equiv 0 $ for convenience. The general case indeed follows with minor modifications, which are standard.\EEE

\section{Preliminaries: Curves of maximal slope}\label{sec3}

    In this section we  recall the relevant definitions \BBB about curves of maximal slope and present a convergence result of time-discrete solutions to curves of maximal slope.  \EEE

\subsection{Definitions}\label{sec: defs}

We consider a   complete metric space $(\mathscr{S},\mathcal{D})$. We say a curve $y: (a,b) \to \mathscr{S}$ is \emph{absolutely continuous} with respect to $\mathcal{D}$ if there exists $m \in L^1(a,b)$ such that
\begin{align*}
\mathcal{D}(y(s),y(t)) \le \int_s^t m(r) \, dr \ \ \   \text{for all} \ a \le s \le t \le b.
\end{align*}
The smallest function $m$ with this property, denoted by $|y'|_{\mathcal{D}}$, is called \emph{metric derivative} of  $y$  and satisfies  for a.e.\ $t \in (a,b)$   (see \cite[Theorem 1.1.2]{AGS} for the existence proof)
$$|y'|_{\mathcal{D}}(t) := \lim_{s \to t} \frac{\mathcal{D}(y(s),y(t))}{|s-t|}.$$
We  define the notion of a \emph{curve of maximal slope}. We only give the basic definition here and refer to \cite[Section 1.2, 1.3]{AGS} for motivations and more details.  By  $h^+:=\max(h,0)$ we denote the positive part of a function  $h$.

\begin{definition}[Upper gradients, slopes, curves of maximal slope]\label{main def2} 
 We consider a   complete metric space $(\mathscr{S},\mathcal{D})$ with a functional $\phi: \mathscr{S} \to (-\infty,+\infty]$.

(i) A function $g: \mathscr{S} \to [0,\infty]$ is called a strong upper gradient for $\phi$ if for every absolutely continuous curve $ \BBB y: \EEE (a,b) \to \mathscr{S}$ the function $g \circ y$ is Borel and 
$$|\phi(y(t)) - \phi(y(s))| \le \int_s^t g( y(r)) |y'|_{\mathcal{D}}(r)\,dr \  \ \  \text{for all} \ a< s \le t < b.$$

(ii) For each $y \in \mathscr{S}$ the local slope of $\phi$ at $y$ is defined by 
$$|\partial \phi|_{\mathcal{D}}(y): = \limsup_{z \to y} \frac{(\phi(y) - \phi(z))^+}{\mathcal{D}(y,z)}.$$

(iii) An absolutely continuous curve $y: (a,b) \to \mathscr{S}$ is called a curve of maximal slope for $\phi$ with respect to the strong upper gradient $g$ if for a.e.\ $t \in (a,b)$
$$\frac{\rm d}{ {\rm d} t} \phi(y(t)) \le - \frac{1}{2}|y'|^2_{\mathcal{D}}(t) - \frac{1}{2}g^2(y(t)).$$
\end{definition}

\subsection{Curves of maximal slope as limits of time-discrete solutions}\label{sec: auxi-proofs}

We consider a sequence of complete metric spaces $(\mathscr{S}_k, \mathcal{D}_k)_k$, as well as a limiting complete metric space $(\mathscr{S},\mathcal{D})$. Moreover, let $(\phi_k)_k$ be a sequence of functionals with $\phi_k: \mathscr{S}_k \to [0,\infty]$ and $\phi: \mathscr{S} \to [0,\infty]$.

We introduce time-discrete solutions for the energy $\phi_k$ and the metric $\mathcal{D}_k$ by solving suitable time-incremental minimization problems: consider a fixed time step $\tau >0$ and suppose that an initial datum $Y^0_{k,\tau}$ is given. Whenever $Y_{k,\tau}^0, \ldots, Y^{n-1}_{k,\tau}$ are known, $Y^n_{k,\tau}$ is defined as (if existent)
\begin{align}\label{eq: ds-new1}
Y_{k,\tau}^n = {\rm argmin}_{v \in \mathscr{S}_k} \Phi_k(\tau,Y^{n-1}_{k,\tau}; v), \ \ \ \Phi_k(\tau,u; v):=  \frac{1}{2\tau} \mathcal{D}_k(v,u)^2 + \phi_k(v). 
\end{align}
We suppose that for a choice of $\tau$ a sequence $(Y_{k,\tau}^n)_{n \in \N}$ solving \BBB \eqref{eq: ds-new1} \EEE exists. Then we define the  piecewise constant interpolation by
\begin{align}\label{eq: ds-new2}
 \tilde{Y}_{k,\tau}(0) = Y^0_{k,\tau}, \ \ \ \tilde{Y}_{k,\tau}(t) = Y^n_{k,\tau}  \ \text{for} \ t \in ( (n-1)\tau,n\tau], \ n\ge 1.  
\end{align}
We call  $\tilde{Y}_{k,\tau}$  a \emph{time-discrete solution}. Note that the existence of such  solutions is usually guaranteed by the direct method of the calculus of variations under suitable compactness, coercivity, and lower semicontinuity assumptions. 

Our goal is to study the limit of time-discrete solutions as $k \to \infty$. To this end, we need to introduce a suitable topology for the convergence. First, although $\mathcal{D}$ naturally induces a topology on the limiting space $\mathscr{S}$, it is often convenient to consider a weaker Hausdorff topology $\sigma$ on $\mathscr{S}$ to have more flexibility in the derivation of compactness properties (see \cite[Remark 2.0.5]{AGS}). We assume that for each $k \in \N$ there exists a map $\pi_k: \mathscr{S}_k \to \mathscr{S}$. Given  a sequence $(z_k)_k$, $z_k \in \mathscr{S}_k$, and $z \in \mathscr{S}$,  we say
\begin{align}\label{eq: sigma'}
 z_k \stackrel{\pi\sigma}{\to} z \  \ \ \ \  \text{if} \ \ \ \pi_k(z_k) \stackrel{\sigma}{\to} z. 
 \end{align}
We suppose that the topology $\sigma$ satisfies  
\begin{align}\label{compatibility}
\begin{split}
z_k \stackrel{\pi\sigma}{\to} z, &\ \  \bar{z}_k \stackrel{\pi\sigma}{\to} \bar{z}  \ \ \  \Rightarrow \ \ \ \liminf_{k \to \infty} \mathcal{D}_k(z_k,\bar{z}_k) \ge  \mathcal{D}(z,\bar{z}).
\end{split}
\end{align}
Moreover, assume that    there exists a  $\sigma$-sequentially   compact set   $K_N \subset \mathscr{S}$   such that for all $k \in \N$
\begin{align}\label{basic assumptions2}
\lbrace  \pi_k(z): \ z \in \mathscr{S}_k, \ \phi_k(z) \le N \rbrace \subset K_N.
\end{align}
Specifically, for a sequence $(z_k)_k$ with $\phi_k(z_k) \le N$, we find a subsequence (not relabeled) and $z \in \mathscr{S}$ such that $\pi_k(z_k) \stackrel{\sigma}{\to} z$. We suppose lower semicontinuity of the energies and the slopes in the following sense: for all $z \in \mathscr{S}$ and $(z_k)_k$, $z_k \in \mathscr{S}_k$, we have
\begin{align}\label{eq: implication}
\begin{split}
z_k \stackrel{\pi\sigma}{\to}  z \ \ \ \ \  \Rightarrow \ \ \ \ \  \liminf_{k \to \infty} |\partial \phi_{k}|_{\mathcal{D}_{k}} (z_{k}) \ge |\partial \phi|_{\mathcal{D}} (z), \ \ \ \ \  \liminf_{k \to \infty} \phi_{k}(z_{k}) \ge \phi(z).
\end{split}
\end{align}

We now formulate the main convergence result of time-discrete solutions to curves of maximal slope, proved in \cite[Section 2]{Ortner}.


\begin{theorem}\label{th:abstract convergence 2}
Suppose that   \eqref{compatibility}-\eqref{eq: implication} hold. Moreover, assume that    $|\partial \phi|_{\mathcal{D}}$ is a  strong upper gradient for $ \phi $.   Consider a  null sequence $(\tau_k)_k$. Let   $(Y^0_{k,\tau_k})_k$ with $Y^0_{k,\tau_k} \in \mathscr{S}_k$  and $\bar{z}_0 \in \mathscr{S}$ be initial data satisfying 
\begin{align}\label{eq: abstract assumptions1}
(i)& \ \ \sup\nolimits_k \mathcal{D} \big(\pi_k\big(Y^0_{k,\tau_k}\big),\bar{z}_0\big) < + \infty, \notag \\ 
(ii)& \ \  Y^0_{k,\tau_k} \stackrel{\pi\sigma}{\to} \bar{z}_0 , \ \ \ \ \  \phi_k(Y^0_{k,\tau_k}) \to \phi(\bar{z}_0).
\end{align}
Then for each sequence of discrete solutions $(\tilde{Y}_{k,\tau_k})_k$  starting from $(Y^0_{k,\tau_k})_k$ there exists a limiting function $z: [0,+\infty) \to \mathscr{S}$ such that up to a  subsequence (not relabeled) 
$$\tilde{Y}_{k,\tau_k}(t) \stackrel{\pi\sigma}{\to} z(t), \ \ \ \ \ \phi_k(\tilde{Y}_{\tau_k}(t)) \to \phi(z(t)) \ \ \  \ \ \ \ \ \forall t \ge 0$$
as $k \to \infty$, and $z$ is a curve of maximal slope for $\phi$ with respect to $|\partial \phi|_{\mathcal{D}}$. In particular,  $z$ satisfies the energy identity 
\begin{align}\label{maximalslope}
\frac{1}{2} \int_0^T |z'|_{\mathcal{D}}^2(t) \, dt + \frac{1}{2} \int_0^T |\partial \phi|_{\mathcal{D}}^2(z(t)) \, dt + \phi(z(T)) = \phi(\bar{z}_0) \ \  \ \ \ \forall T>0. 
\end{align} 

\end{theorem}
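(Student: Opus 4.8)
The plan is to follow the classical strategy for passing from minimizing-movement (variational time-discretization) approximations to curves of maximal slope, as developed by Sandier--Serfaty \cite{S1,S2} and Ortner \cite{Ortner}, using the fine estimates for the De Giorgi scheme from \cite[Ch.~2--3]{AGS}; the structural hypotheses \eqref{compatibility}, \eqref{basic assumptions2}, \eqref{eq: implication}, \eqref{eq: abstract assumptions1} are exactly what make each limit passage work. \textbf{Step 1: discrete a priori estimates.} I would first test the minimality \eqref{eq: ds-new1} of $Y^n_{k,\tau_k}$ against the competitor $Y^{n-1}_{k,\tau_k}$ to get the discrete energy--dissipation inequality $\tfrac{1}{2\tau_k}\mathcal{D}_k(Y^n_{k,\tau_k},Y^{n-1}_{k,\tau_k})^2+\phi_k(Y^n_{k,\tau_k})\le\phi_k(Y^{n-1}_{k,\tau_k})$. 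Summing shows that $n\mapsto\phi_k(Y^n_{k,\tau_k})$ is non-increasing and that $\sum_n\tfrac{1}{2\tau_k}\mathcal{D}_k(Y^n_{k,\tau_k},Y^{n-1}_{k,\tau_k})^2\le\phi_k(Y^0_{k,\tau_k})$, bounded uniformly in $k$ by \eqref{eq: abstract assumptions1}(ii). A Cauchy--Schwarz estimate on the telescoped dissipations gives a uniform averaged modulus of continuity for $\tilde Y_{k,\tau_k}$ and, with \eqref{eq: abstract assumptions1}(i) and \eqref{compatibility}, a uniform bound on $\mathcal{D}(\pi_k(\tilde Y_{k,\tau_k}(t)),\bar z_0)$ on compact time intervals. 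I would also invoke the sharp De Giorgi estimate: writing $\tilde{\tilde Y}_{k,\tau_k}$ for the variational interpolant, one has $\tfrac12\int_0^T|\tilde Y'_{k,\tau_k}|_{\mathcal{D}_k}^2+\tfrac12\int_0^T|\partial\phi_k|_{\mathcal{D}_k}^2(\tilde{\tilde Y}_{k,\tau_k}(t))\,dt+\phi_k(\tilde Y_{k,\tau_k}(T))\le\phi_k(Y^0_{k,\tau_k})$, with $\tilde{\tilde Y}_{k,\tau_k}$ staying $\mathcal{D}_k$-close to $\tilde Y_{k,\tau_k}$.

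\textbf{Step 2: compactness of the discrete curves.} Since the energy decreases along the scheme, $\phi_k(\tilde Y_{k,\tau_k}(t))\le\phi_k(Y^0_{k,\tau_k})$ stays bounded, so by \eqref{basic assumptions2} the points $\pi_k(\tilde Y_{k,\tau_k}(t))$ lie in a fixed $\sigma$-sequentially compact set $K_N$ for all $t$ and $k$. Combining this with the averaged equicontinuity from Step 1 and \eqref{compatibility}, I would apply a refined Arzel\`a--Ascoli/Helly selection (as in \cite[Prop.~2.3.1, 3.3.1]{AGS}, \cite{Ortner}) to extract a subsequence and a limit curve $z:[0,\infty)\to\mathscr{S}$ with $\tilde Y_{k,\tau_k}(t)\stackrel{\pi\sigma}{\to}z(t)$ for \emph{every} $t\ge0$; a diagonal argument over $T\in\N$ covers the unbounded interval. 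Passing to the liminf in the discrete modulus of continuity via \eqref{compatibility} shows $z$ is absolutely continuous with respect to $\mathcal{D}$ and $\int_0^T|z'|_{\mathcal{D}}^2\le\liminf_k\int_0^T|\tilde Y'_{k,\tau_k}|_{\mathcal{D}_k}^2$; the same $\pi\sigma$-limit is shared by $\tilde{\tilde Y}_{k,\tau_k}(t)$, and $z(0)=\bar z_0$ because $\tilde Y_{k,\tau_k}(0)=Y^0_{k,\tau_k}\stackrel{\pi\sigma}{\to}\bar z_0$ and $\sigma$ is Hausdorff.

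\textbf{Step 3: limit passage and upgrade to an identity.} Taking $\liminf_{k\to\infty}$ in the sharp discrete estimate of Step 1: $\liminf_k\phi_k(\tilde Y_{k,\tau_k}(T))\ge\phi(z(T))$ and $\liminf_k|\partial\phi_k|_{\mathcal{D}_k}(\tilde{\tilde Y}_{k,\tau_k}(t))\ge|\partial\phi|_{\mathcal{D}}(z(t))$ follow from \eqref{eq: implication} (the latter combined with Fatou), the metric-derivative term is handled by Step 2, and $\phi_k(Y^0_{k,\tau_k})\to\phi(\bar z_0)$ by \eqref{eq: abstract assumptions1}(ii). This yields
\[
\tfrac12\int_0^T|z'|_{\mathcal{D}}^2(t)\,dt+\tfrac12\int_0^T|\partial\phi|_{\mathcal{D}}^2(z(t))\,dt+\phi(z(T))\le\phi(\bar z_0)=\phi(z(0)).
\]
Conversely, since $|\partial\phi|_{\mathcal{D}}$ is assumed a strong upper gradient and $z$ is absolutely continuous, the chain-rule inequality followed by Young's inequality gives $\phi(z(0))-\phi(z(T))\le\int_0^T|\partial\phi|_{\mathcal{D}}(z(t))\,|z'|_{\mathcal{D}}(t)\,dt\le\tfrac12\int_0^T|\partial\phi|_{\mathcal{D}}^2(z(t))\,dt+\tfrac12\int_0^T|z'|_{\mathcal{D}}^2(t)\,dt$. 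Hence the two inequalities are equalities for every $T>0$, which is \eqref{maximalslope}; in particular $\phi_k(\tilde Y_{k,\tau_k}(t))\to\phi(z(t))$ for all $t$, Young's inequality is saturated so $|z'|_{\mathcal{D}}=|\partial\phi|_{\mathcal{D}}(z)$ a.e., and differentiating \eqref{maximalslope} in $T$ gives $\frac{\rm d}{{\rm d}t}\phi(z(t))\le-\tfrac12|z'|_{\mathcal{D}}^2(t)-\tfrac12|\partial\phi|_{\mathcal{D}}^2(z(t))$ a.e., i.e.\ $z$ is a curve of maximal slope for $\phi$ with respect to $|\partial\phi|_{\mathcal{D}}$.

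\textbf{Main obstacle.} The two delicate points are the compactness step --- because $\sigma$ is only a weak Hausdorff topology and the dissipation bound gives merely an averaged (Cauchy--Schwarz) modulus of continuity, selecting a single subsequence along which $\tilde Y_{k,\tau_k}(t)$ converges for all $t$ to an absolutely continuous curve needs the refined Helly argument of \cite{AGS,Ortner} rather than naive Arzel\`a--Ascoli --- and the limit passage in the slope term, which forces one to work with the De Giorgi variational interpolant (the only object satisfying the sharp discrete estimate) and to have the lower semicontinuity of slopes \eqref{eq: implication} available; verifying \eqref{eq: implication} is itself the hardest ingredient of the whole program and must be established separately in the concrete plate setting (cf.\ Theorem \ref{theorem: lsc-slope}).
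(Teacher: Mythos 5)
Your proposal is correct and follows essentially the same route as the paper, which does not reprove this result but defers to the minimizing-movement argument of \cite[Section 2]{Ortner} (combined with \cite{AGS, S2}): discrete energy--dissipation estimates, the De Giorgi variational interpolant, refined Helly-type compactness, lower semicontinuity of energies and slopes via \eqref{compatibility}--\eqref{eq: implication}, and the strong-upper-gradient chain rule to upgrade the limiting inequality to the identity \eqref{maximalslope}. Your handling of the two delicate points (pointwise-in-$t$ selection under the weak topology $\sigma$, and identifying the $\pi\sigma$-limit of the variational interpolant with that of the piecewise constant one via \eqref{basic assumptions2} and \eqref{compatibility}) matches the cited proofs.
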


The statement is a combination of convergence results for curves of maximal slope \cite{MFMK, S2} with their approximation by time-discrete solutions via the minimizing movement scheme. We refer to \cite[Theorem 3.6]{MFMK} for an abstract convergence result for curves of maximal slope in a setting \BBB where \EEE conditions  \eqref{compatibility}-\eqref{eq: implication} hold. We also mention the version in the seminal work \cite{S2} where condition \eqref{compatibility} is replaced by a lower bound condition on the metric derivatives along the sequence.  

For the proof \BBB of Theorem \ref{th:abstract convergence 2} \EEE we refer to \cite[Section 2]{Ortner}. We also mention \cite[Theorem 3.7]{MFMK} for a formulation of the result which is a bit closer to the statement given here. Strictly speaking, in \cite{Ortner}, only the case of a single metric space is considered. The generalization   to a sequence of spaces,  however, is straightforward, cf. \cite[equation (2.7)]{S2}. Note that the nonnegativity of the energies $\phi_k$ can be generalized to a suitable \emph{coerciveness} condition, see \cite[(2.1.2b)]{AGS} or \cite[(2.5)]{Ortner}. This is not included here for the sake of simplicity.

 Let us also mention the recently obtained variant \cite{BCGS} where it is  not necessary to require that  $|\partial \phi|_{\mathcal{D}}$ is a \emph{strong}   upper gradient,  cf.\ \cite[Definition 1.2.1 and Definition 1.2.2]{AGS} for the definition of  strong and weak upper gradients.  This comes at the expense of the fact that  the lower semicontinuity along the sequence $(\phi_k)_k$ (see \eqref{eq: implication}) has to be replaced by a continuity condition along $(\phi_k)_k$ for sequences  $(\pi_k(z_k))_k$   converging with respect to the metric $\mathcal{D}$.

\section{Properties of energies and dissipation distances}\label{sec:energy-dissipation}

In this section we prove \BBB basic \EEE properties of the energies and dissipation distances, \BBB and we establish properties for the local slope in the 2D setting. \EEE  Let $h>0$ and $0 < \alpha < 1$. We  recall the definition of the nonlinear energy and the dissipation distance in \eqref{nonlinear energy-rescale} and \eqref{eq: D,D0-1}, respectively.  We also recall \eqref{eq: nonlinear boundary conditions} and the notation for the sublevel sets  $\mathscr{S}_h^M = \lbrace y \in \mathscr{S}_h: \phi_h(y) \le   M \rbrace$. In the whole section, $C\ge 1$ and $ 0 < c \le 1$ indicate generic constants, which may vary from line to line and depend on $M$, $S$, the exponent $p>3$ (see \eqref{assumptions-P}), \BBB $\alpha$, \EEE on the constants in \eqref{assumptions-W},  \eqref{assumptions-P}, \eqref{eq: assumptions-D}, and on the boundary data $\hat{u}$ and $\hat{v}$. However, all constants are always independent of the small parameter $h$ \BBB and the deformations $y$. \EEE

\subsection{Basic properties}

We start with some properties about the Hessian of $W$ and $D^2$. By $\partial^2 D^2$ we denote the Hessian and by $\partial^2_{F_1^2} D^2, \partial^2_{F_2^2} D^2$ the Hessian in direction of the first or second entry of $D^2$, respectively. Moreover, we define ${\rm sym }(F) = \frac{1}{2}(F + F^\top)$ for $F \in \R^{d \times d}$,   $d=2,3$.  Recall the definitions of the quadratic forms in \eqref{eq:Q2}-\eqref{eq:Q22}.  By $\Id \subset \R^{3\times 3}$ we again denote the identity matrix.   

\begin{lemma}[Properties of Hessian]\label{D-lin}
 (i) $\partial^2_{F_1^2}D^2(Y,Y) = \partial^2_{F_2^2}D^2(Y,Y)$ for all  $Y \in \R^{3 \times 3}$ in a neighborhood of \BBB $SO(3)$ \EEE such that $\partial^2 D^2(Y,Y)$ exists.  

(ii) There \BBB exists \EEE a  constant $c>0$ such that $Q^d_W(F) = Q^d_W({\rm sym}(F))  \ge c|{\rm sym}(F)|^2$ and  $Q^d_D(F) = Q^d_D({\rm sym}(F))  \ge c|{\rm sym}(F)|^2$ for \BBB all $F \in \R^{d \times d}$,  $d=2,3$. \EEE

\BBB (iii) There  exists   $C>0$ such that for all $F_0, F_1 \in \R^{3 \times 3} $ in a neighborhood of  $SO(3)$  there holds $|W(F_1) - W(F_0) - \frac{1}{2}(Q_W^3(F_1 - \Id) - Q_W^3(F_0 - \Id))| \le  \sum\nolimits_{k=1}^3  C|F_0 - \Id|^{3-k}|F_1 - F_0|^k$. \EEE
\end{lemma}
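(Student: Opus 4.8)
The plan is to prove the three items of Lemma~\ref{D-lin} essentially by Taylor expansion, exploiting the frame-indifference assumptions \eqref{assumptions-W}(ii) and \eqref{eq: assumptions-D}(v) together with the regularity stated in \eqref{assumptions-W}(i) and \eqref{eq: assumptions-D}(iv).

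\textbf{Part (i).} I would start from the separate frame indifference \eqref{eq: assumptions-D}(v), which gives $D^2(Q_1 F_1, Q_2 F_2) = D^2(F_1, F_2)$ for all $Q_1, Q_2 \in SO(3)$, together with the symmetry $D^2(F_1,F_2) = D^2(F_2,F_1)$ from \eqref{eq: assumptions-D}(ii). The symmetry immediately gives $\partial^2_{F_1^2} D^2(F_1,F_2) = \partial^2_{F_2^2} D^2(F_2,F_1)$ as an identity of fourth-order tensors (just swap the roles of the slots), so evaluating on the diagonal $F_1 = F_2 = Y$ yields $\partial^2_{F_1^2} D^2(Y,Y) = \partial^2_{F_2^2} D^2(Y,Y)$ at every point $Y$ near $SO(3)$ where $\partial^2 D^2(Y,Y)$ exists. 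This is short; the only care needed is to phrase the chain rule / symmetry of the Hessian correctly in multi-index notation.

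\textbf{Part (ii).} For $Q_W^3$, frame indifference $W(QF) = W(F)$ differentiated twice at $F = \Id$ in directions of skew matrices shows that $Q_W^3(F) = \partial^2_{F^2} W(\Id)[F,F]$ depends only on ${\rm sym}(F)$: writing $F = {\rm sym}(F) + {\rm skew}(F)$ and using that $W$ vanishes to second order along the orbit $SO(3)$ kills the antisymmetric contribution. Positive definiteness $Q_W^3(F) \ge c|{\rm sym}(F)|^2$ on symmetric matrices follows from the lower bound $W(F) \ge c\,\dist^2(F,SO(3))$ in \eqref{assumptions-W}(iii): a second-order Taylor expansion of $W$ at $\Id$ along $\Id + tG$ with $G$ symmetric gives $\tfrac{1}{2}t^2 Q_W^3(G) + o(t^2) = W(\Id + tG) \ge c\,\dist^2(\Id + tG, SO(3)) \ge c' t^2 |G|^2 + o(t^2)$ for small $t$, hence $Q_W^3(G) \ge c''|G|^2$. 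Scaling by homogeneity of the quadratic form extends this to all symmetric $G$, and then to all $F$ via the reduction to ${\rm sym}(F)$. The argument for $Q_D^3$ is identical, using $D(F,\Id) \ge c\,\dist(F,SO(3))$ from \eqref{eq: assumptions-D}(vi) and the separate frame indifference in place of the frame indifference of $W$; note $Q_D^3(F) = 2R(\Id, F)$ as recorded after \eqref{eq:Q22}, and the claim descends to $Q_W^2, Q_D^2$ since the minimization in \eqref{eq:Q2}, \eqref{eq:Q22} of a positive definite form over the affine family $G^* + a\otimes e_3 + e_3 \otimes a$ is itself positive definite on $\R^{2\times2}_{\rm sym}$ (and the value at a minimizing $a$ still controls $|{\rm sym}(G^*)| = |G|$ when $G$ is symmetric).

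\textbf{Part (iii).} This is a quantitative second-order Taylor estimate. I would expand both $W(F_1)$ and $W(F_0)$ about $\Id$ with integral (or Lagrange) remainder: since $W \in C^3$ near $SO(3)$ (by \eqref{assumptions-W}(i)) and $W(\Id) = 0$, $\partial_F W(\Id) = 0$ (minimum), we get $W(F) = \tfrac12 Q_W^3(F - \Id) + O(|F-\Id|^3)$, but to obtain the stated form with the cross terms $|F_0 - \Id|^{3-k}|F_1 - F_0|^k$ I would instead Taylor-expand $F \mapsto W(F)$ and $F \mapsto \tfrac12 Q_W^3(F-\Id)$ around the point $F_0$ rather than around $\Id$: write $g(F) := W(F) - \tfrac12 Q_W^3(F - \Id)$, note $g \in C^3$ with $g(F_0), \partial g(F_0), \partial^2 g(F_0)$ all controlled by $|F_0 - \Id|$ (indeed $\partial^2 g(\Id) = 0$, so $|\partial^2 g(F_0)| \le C|F_0 - \Id|$, and similarly the lower-order derivatives vanish appropriately at $\Id$), and then Taylor-expand $g(F_1) - g(F_0)$ to third order in $F_1 - F_0$; collecting terms gives exactly $\sum_{k=1}^3 C|F_0 - \Id|^{3-k}|F_1 - F_0|^k$. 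The one subtlety, and the part I'd treat most carefully, is bookkeeping the vanishing orders of the derivatives of $g$ at $\Id$ (that $g(\Id) = 0$, $\partial g(\Id) = 0$, $\partial^2 g(\Id) = 0$, while $\partial^3 g$ is merely bounded) so that each factor of $|F_0 - \Id|$ appears with the right power; this is routine but is where a careless expansion would produce the wrong exponents.

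Overall I expect no serious obstacle: all three parts are consequences of smoothness plus the symmetry and frame-indifference hypotheses, and the only place demanding genuine care is the precise matching of powers in the remainder estimate of part~(iii).
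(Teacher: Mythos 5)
Your proposal is correct. For part (iii) it is essentially the paper's own argument: the paper Taylor-expands $W(F_1)$ about $F_0$ to second order with an ${\rm O}(|F_1-F_0|^3)$ remainder, controls $DW(F_0)$ and $D^2W(F_0)$ through their values at $\Id$ (using $DW(\Id)=0$ and $|D^2W(F_0)-D^2W(\Id)|\le C|F_0-\Id|$), and then compares with the algebraic identity $Q_W^3(F_1-\Id)-Q_W^3(F_0-\Id)=2D^2W(\Id)[F_0-\Id,F_1-F_0]+D^2W(\Id)[F_1-F_0,F_1-F_0]$; your device of expanding $g:=W-\tfrac12 Q_W^3(\cdot-\Id)$ about $F_0$, with $g(\Id)=\partial g(\Id)=\partial^2 g(\Id)=0$, is equivalent bookkeeping and yields the same exponents. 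For (i) and (ii) the paper gives no proof at all but cites \cite[Lemma 4.1]{MFMK}; your direct arguments — the symmetry $D(F_1,F_2)=D(F_2,F_1)$ for (i), and for (ii) frame indifference to reduce to ${\rm sym}(F)$, the coercivity hypotheses \eqref{assumptions-W}(iii) and \eqref{eq: assumptions-D}(vi) expanded along $\Id+tG$ for the lower bound, plus the observation that the upper-left $2\times 2$ block of $a\otimes e_3+e_3\otimes a$ vanishes so the minimization in \eqref{eq:Q2}--\eqref{eq:Q22} preserves the bound — are sound and fill that gap correctly.
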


\begin{proof}
\BBB For the proof of (i) and (ii) we refer to \cite[Lemma 4.1]{MFMK}. To see (iii), we perform a Taylor expansion. First, we find 
$$W(F_1) = W(F_0) + DW(F_0):(F_1 -F_0) + \tfrac{1}{2}D^2W(F_0)[F_1-F_0,F_1-F_0] + {\rm O}(|F_1 - F_0|^3).$$
We observe that  $DW(F_0) = DW(\Id) + D^2W(\Id)(F_0 - \Id) + {\rm O}(|F_0-\Id|^2)$ and $DW(\Id) = 0$ by \eqref{assumptions-W}(iii). Moreover, $|D^2W(F_0) - D^2W(\Id)| \le C|F_0 -\Id|$  by the regularity of $W$.     Thus, we get
 \begin{align}\label{eq: W-lemma1}
W(F_1) & =   W(F_0) + D^2W(\Id)[F_0 - \Id, F_1 -F_0]  + \tfrac{1}{2}D^2W(\Id)[F_1-F_0, F_1 -F_0] \notag\\
& \ \ \   +  {\rm O}(|F_0 - \Id||F_1 - F_0|^2)  +  {\rm O}(|F_0 - \Id|^2|F_1 - F_0|)  + {\rm O}(|F_1 - F_0|^3).
\end{align} 
By recalling that $Q_W^3(F) = D^2W(\Id)[F, F]$ for $F \in \R^{3 \times 3}$ and the fact that $D^2W(\Id)[\cdot,\cdot]$ is symmetric in the two entries, an elementary computation yields
\begin{align}\label{eq: W-lemma2}
Q_W^3(F_1 - \Id) - Q_W^3(F_0 - \Id) = 2D^2W(\Id)[F_0 - \Id, F_1 -F_0]  + D^2W(\Id)[F_1-F_0, F_1 -F_0].
\end{align}
The result follows by combination of \eqref{eq: W-lemma1} and \eqref{eq: W-lemma2}.  \EEE
\end{proof}

\BBB The following geometric rigidity result will be a key ingredient for our analysis. \EEE

\begin{lemma}[Rigidity in thin domains]\label{lemma:rigidity}
For $h$ sufficiently small, for all $y  \in \mathscr{S}_h^M$ there exists a mapping $R(y) \in W^{1,2}(S; SO(3))$ satisfying  
\begin{align}\label{eq:rigidity}
(i) & \ \ \Vert \nabla_h y - R(y) \Vert^2_{L^2(\Omega)} \le Ch^4, \notag\\
(ii) & \ \  \Vert \nabla_h y - \Id \Vert^2_{L^2(\Omega)} \le Ch^2, \notag \\
(iii) & \ \ \Vert \nabla' R(y)   \Vert^2_{L^2(S)}\le C h^2  \notag \\
(iv) & \ \   \Vert R(y) -\Id  \Vert_{L^q(S)}\le C_qh,\notag\\  
(v) & \ \ \Vert \nabla_h y -\Id  \Vert_{L^\infty(\Omega)}\le Ch^{\alpha},\notag \\
(vi) & \ \ \BBB \Vert R(y) -\Id  \Vert_{L^\infty(S)}\le Ch^\alpha, \EEE  
\end{align}
where $C_q$ depends also on $q \in \PPP [1,\infty)\EEE$. (In (i), $R(y)$ is extended to   $\Omega$ by $R(y)(x',x_3) = R(y)(x')$.) 
\end{lemma}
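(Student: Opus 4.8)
The plan is to reduce everything to the Friesecke--James--M\"uller geometric rigidity estimate \cite{FrieseckeJamesMueller:02} applied in the thin cylinder $\Omega = S \times I$, combined with the energy bound $\phi_h(y) \le M$ and the higher-order regularization term. First I would record that $\phi_h(y)\le M$ together with \eqref{assumptions-W}(iii) gives $\int_\Omega \dist^2(\nabla_h y, SO(3)) \le Ch^4$, and together with \eqref{assumptions-P}(iii) gives $\Vert \nabla_h^2 y\Vert_{L^p(\Omega)}^p \le C h^{\alpha p}$, i.e. $\Vert \nabla_h^2 y\Vert_{L^p(\Omega)}\le Ch^\alpha$. (One must be a little careful here: the boundary terms and the force term in $\phi_h$ must be absorbed using the trace/Poincar\'e inequalities adapted to the clamped boundary conditions \eqref{eq: nonlinear boundary conditions} and the bound $f\in L^\infty$, so that the coercive part of the energy is genuinely controlled by $M$ plus a constant; this is where the hypothesis ``$h$ sufficiently small depending on $M$'' enters.)

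\textbf{The rotation field and items (i)--(iv).} These are essentially the standard consequences of quantitative rigidity in the von K\'arm\'an scaling and I would follow \cite{hierarchy, lecumberry} closely. Applying the rigidity estimate on $\Omega$ (rescaled so that the thin direction has unit length, which is exactly what $\nabla_h$ encodes) produces a single constant rotation $\bar Q \in SO(3)$ with $\Vert \nabla_h y - \bar Q\Vert_{L^2(\Omega)}^2 \le C h^4$; because the clamped boundary data in \eqref{eq: nonlinear boundary conditions} are $h$-close to the identity, $\bar Q$ may be replaced by $\Id$ at the cost of the same order, which already gives a rough form of (ii). For the $x'$-dependent field $R(y)$ one subdivides $S$ into squares of side $\sim h$, applies rigidity on each slab $S_k \times I$ to get rotations $R_k$, and interpolates to a map $R(y)\in W^{1,2}(S;SO(3))$; the local estimates sum to $\Vert \nabla_h y - R(y)\Vert_{L^2(\Omega)}^2 \le Ch^4$, which is (i), and comparing rotations on neighbouring squares yields the gradient bound $\Vert \nabla' R(y)\Vert_{L^2(S)}^2 \le Ch^2$, which is (iii). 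Then (ii) follows from (i) and (iii) (or directly), and (iv) follows from (iii) together with the boundary condition $R(y) = \Id + \mathrm{O}(h)$ on $\partial S$ via the Poincar\'e and Sobolev inequalities in the two-dimensional domain $S$, giving $\Vert R(y)-\Id\Vert_{L^q(S)} \le C_q h$ for every $q<\infty$.

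\textbf{The $L^\infty$ bounds (v)--(vi): the main obstacle.} This is the part that genuinely uses the nonsimple (second-gradient) structure and is the crux of the lemma. The point is that (i)--(iv) only give $L^2$- or $L^q$-type control, whereas (v)--(vi) are uniform bounds; these cannot follow from rigidity alone and must exploit $\Vert \nabla_h^2 y\Vert_{L^p(\Omega)} \le Ch^\alpha$ with $p>3$. I would argue as follows: write $\nabla_h y - \Id = (\nabla_h y - R(y)) + (R(y)-\Id)$. For the first summand, interpolate the $L^2$ bound (i) against the $W^{1,p}$ bound coming from $\Vert \nabla_h^2 y\Vert_{L^p} \le Ch^\alpha$; since $p>3 = \dim\Omega$, the Gagliardo--Nirenberg/Morrey embedding $W^{1,p}(\Omega)\hookrightarrow C^0(\overline\Omega)$ combined with the interpolation inequality $\Vert g\Vert_{L^\infty} \le C\Vert g\Vert_{L^2}^{\theta}\Vert g\Vert_{W^{1,p}}^{1-\theta}$ (with the appropriate $\theta\in(0,1)$ determined by $p$ and the dimension) turns $h^4$ and $h^\alpha$ into a positive power of $h$; one then checks that the resulting exponent is at least $\alpha$, which holds precisely because $\alpha<1$ and the $h^4$ factor is so small — this is exactly the role of the choice $\beta_1 = 4-\alpha p$. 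For the second summand $R(y)-\Id$, use that $R(y)$ takes values in the compact manifold $SO(3)$, so $|R(y)-\Id| \le C$ pointwise, and upgrade this to the sharp rate by interpolating (iv) (an $L^q$ bound of order $h$) against the $W^{1,2}$ bound (iii); since we may take $q$ as large as we wish, a Gagliardo--Nirenberg inequality in the plane gives $\Vert R(y)-\Id\Vert_{L^\infty(S)} \le C h^{1-\delta}$ for any $\delta>0$, in particular $\le Ch^\alpha$, which is (vi). Combining the two summands yields (v). The delicate bookkeeping — making sure the interpolation exponents actually close up and produce the exponent $\alpha$ rather than something worse, and doing the slab construction of $R(y)$ so that all constants are $h$-independent — is where the real work lies; everything else is a careful but routine assembly of known estimates.
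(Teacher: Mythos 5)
Your treatment of (i)--(iv) matches the paper's: these are quoted from \cite{hierarchy} (Theorem 6, Remark 5) and \cite{lecumberry} (Lemma 13, for replacing $\bar Q$ by $\Id$ via the clamped boundary conditions), and your slab-decomposition sketch is the standard argument behind those results. The gap is in (v)--(vi). For the second summand $R(y)-\Id$ you propose to interpolate the $L^q(S)$ bound (iv) against the $W^{1,2}(S)$ bound (iii) to get $L^\infty(S)$ control. This cannot work: $S\subset\R^2$ and $W^{1,2}$ is the critical space in two dimensions, so $W^{1,2}(S)\not\hookrightarrow L^\infty(S)$, and no Gagliardo--Nirenberg inequality of the form $\Vert g\Vert_{L^\infty}\le C\Vert \nabla g\Vert_{L^2}^{a}\Vert g\Vert_{L^q}^{1-a}$ with $q<\infty$ is available (the scaling forces $a=1$, which is exactly the excluded borderline case). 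Nor do the bounds $\Vert g\Vert_{L^q}\le C_q h$ for all finite $q$ by themselves imply any $L^\infty$ smallness: a unit bump on a set of measure $e^{-1/h}$ satisfies them. There is a second, more minor, gap in the first summand: to interpolate $\nabla_h y - R(y)$ between $L^2$ and $W^{1,p}$ you need $\nabla' R(y)\in L^p$ with a quantitative bound, but (iii) only gives an $L^2$ bound on $\nabla' R(y)$.

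The paper avoids both problems by reversing the logical order. Property (v) is proved \emph{without} $R(y)$: from $\Vert\nabla_h^2 y\Vert_{L^p(\Omega)}\le Ch^\alpha$ (which also gives $\Vert\nabla y_{,3}\Vert_{L^p}\le Ch^{1+\alpha}$, accounting for the extra factor $h$ in the third column) and $p>3$, Poincar\'e/Morrey in the three-dimensional domain $\Omega$ yields a \emph{constant} matrix $F$ with $\Vert\nabla_h y - F\Vert_{L^\infty(\Omega)}\le Ch^\alpha$; then (ii), the triangle inequality, and $\alpha<1$ force $|F-\Id|\le Ch^\alpha$. Property (vi) is then deduced \emph{from} (v) by using the explicit construction of $R(y)$ in \cite{hierarchy}: $R(y)(x')$ is the nearest-point projection onto $SO(3)$ of an $h$-scale mollified average of $\nabla_h y$, and such an average of a field that is uniformly $Ch^\alpha$-close to $\Id$ is itself $Ch^\alpha$-close to $\Id$. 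If you want to keep your decomposition, you must either adopt this construction of $R(y)$ (so that (vi) follows pointwise from (v)) or find some other source of supercritical regularity for $R(y)$; interpolating (iii) and (iv) will not close.
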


\begin{proof} Property (i) is based on geometric rigidity \cite{FrieseckeJamesMueller:02} and is proved in \cite[Theorem 6, Remark 5]{hierarchy}, where we use that $\Vert \dist(\nabla_h y,SO(3)) \Vert^2_{L^2(\Omega)} \le CMh^4$ by \eqref{assumptions-W}(iii), \BBB \eqref{nonlinear energy-rescale}, \EEE and the fact that $y \in \mathscr{S}_h^M$. Also (ii)-(iv) are proved there with a rotation $\bar{Q}$ in place of $\Id$. The fact that we may choose $\bar{Q} = \Id$ is due to the boundary conditions, which  break the rotational invariance, see \cite[Lemma 13]{lecumberry}. 

We now show (v). By the definition of $\phi_h$ and \eqref{assumptions-P}(iii) we get $\Vert \nabla^2_h y \Vert_{L^p(\Omega)} \le Ch^{\alpha}$ for all $y \in \mathscr{S}^M_h$,   where the constant depends on $M$.   In particular, \BBB by \eqref{eq: scaled1}-\eqref{eq: scaled2}  \EEE this implies
$$\Vert \nabla y_{,3} \Vert_{L^p(\Omega)} \le \Vert \nabla_h y_{,3} \Vert_{L^p(\Omega)} = h\Vert \nabla_h \, h^{-1} y_{,3} \Vert_{L^p(\Omega)} \le h \Vert \nabla^2_h y \Vert_{L^p(\Omega)} \le Ch^{1+\alpha}.$$
As $p>3$, Poincar\'e's inequality yields some $F \in \R^{3 \times 3}$   such that  
\begin{align*}
\Vert \nabla' y -   (Fe_1, Fe_2)  \Vert_{L^\infty(\Omega)} &\le C \Vert \nabla^2 y \Vert_{L^p(\Omega)} \le C \Vert \nabla^2_h y \Vert_{L^p(\Omega)} \le C h^{\alpha}, \\ \notag
  \Vert y_{,3} -h Fe_3 \Vert_{L^\infty(\Omega)} &  \le   C   \Vert \nabla y_{,3} \Vert_{L^p(\Omega)}  \le  Ch^{1+\alpha}
\end{align*}
for a constant additionally depending on $\Omega$ and $p$. This implies $\Vert \nabla_h y -F \Vert_{L^\infty(\Omega)} \le Ch^{\alpha}$. Along with (ii), the triangle inequality, and $\alpha < 1$ we obtain $|F - \Id| \le Ch^\alpha$. This \BBB concludes the proof of \EEE (v).  

\BBB Finally, we show (vi). A careful inspection of the proof of \cite[Theorem 6]{hierarchy} shows that $R(y)(x')$ may be defined as the nearest-point projection onto $SO(3)$ of
$$\PPP \int_I \EEE \int_{ x' + h(-1,1)^2} \frac{1}{h^2}\psi\Big( \frac{x'-z'}{h} \Big) \, \nabla_h y(z',z_3) \, dz' \, dz_3, $$
where $I = (-1/2,1/2)$ and $\psi \in C_c^\infty((-1,1)^2)$ denotes a standard mollifier, i.e., $\psi \ge 0$ and $\int_{(-1,1)^2}\psi=1$. Then (vi) follows from (v).
\end{proof}

By Lemma \ref{lemma:rigidity} we get that $\nabla_h y$ is approximated by the $SO(3)$-valued function $R(y)$. As the energy is invariant under rotations, the energy of $y$ is essentially controlled by the distance of $R(y)^\top \nabla_h y$ from $\Id$. To this end, we introduce the quantity   
\begin{align}\label{eq: G(y)}
G^h(y) := \frac{R(y)^\top \nabla_h y - \Id}{h^2}. 
\end{align}

In the following we set for shorthand $H_Y := \frac{1}{2}\partial^2_{F_1^2} D^2(Y,Y) = \frac{1}{2}\partial^2_{F_2^2} D^2(Y,Y)$  for $Y \in \R^{3 \times 3}$  in a neighborhood of $SO(3)$. Given a deformation $y \in \mathscr{S}^M_h$, we also introduce the mapping $H_{\nabla_h y}: \Omega \to \R^{3 \times 3 \times 3 \times 3}$   by $H_{\nabla_h y}(x) = H_{\nabla_h y(x)}$ for $x \in \Omega$. \BBB Note that this is well defined for $h$ sufficiently small by \eqref{eq:rigidity}(v). \EEE  Recall the definition of $\mathcal{D}_h$ and ${\mathcal{D}}_0$ in \eqref{eq: D,D0-1} and \eqref{eq: D,D0-2}, respectively, and the  definition of $Q_W^3$, $Q_D^3$ in \eqref{eq:Q2}-\eqref{eq:Q22}.

\begin{lemma}[Dissipation and energy]\label{lemma: metric space-properties}
 Let $h$ sufficiently small. Then, for all $y,y_0,y_1 \in \mathscr{S}_h^M$ and all open subsets $U \subset \Omega$  we have  
\begin{align*}
(i) & \ \ \Big|\int_U D^2(\nabla_h y_0,\nabla_h y_1) - \int_U Q_D^3 (\nabla_h y_1 -  \nabla_h y_0) \Big| \le Ch^\alpha \Vert \nabla_h y_1-   \nabla_h y_0 \Vert^2_{L^2(U)}, \notag\\
(ii) & \ \ \BBB \Big|\mathcal{D}_h(y_0,y_1)^2 -  \int_\Omega  Q^3_D \big(G^h(y_0) - G^h(y_1)\big)  \Big| \le C h^{\alpha} \Vert G^h(y_0) - G^h(y_1) \Vert^2_{L^2(\Omega)} \le Ch^{\alpha}, \EEE \notag \\
(iii) & \ \ |\Delta(y)| \le Ch^{\alpha}, \ \ \ \text{where} \ \ \Delta(y) := \frac{1}{h^4}\int_\Omega W(\nabla_h y)  - \int_\Omega  \frac{1}{2} Q_W^3(G^h( y)),\\
 (iv)  & \ \ \BBB |\Delta(y_0) - \Delta(y_1)|   \le Ch^{\alpha}\Vert G^h(y_0) - G^h(y_1) \Vert_{L^2(\Omega)} \le Ch^{\alpha}. \EEE
\end{align*}
\end{lemma}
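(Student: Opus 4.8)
The plan is to exploit the geometric rigidity estimates from Lemma~\ref{lemma:rigidity} together with the Taylor expansion provided by Lemma~\ref{D-lin}(iii), writing everything in terms of the rescaled strain $G^h(y) = h^{-2}(R(y)^\top \nabla_h y - \Id)$ which, by \eqref{eq:rigidity}(i) and (v), is bounded in $L^2(\Omega)$ with $\Vert \nabla_h y - \Id\Vert_{L^\infty} \le Ch^\alpha$ and $\Vert R(y) - \Id\Vert_{L^\infty} \le Ch^\alpha$. For part~(i), I would use frame indifference of $D$: since $D(Q_1 F_1, Q_2 F_2) = D(F_1, F_2)$, I can write $D^2(\nabla_h y_0, \nabla_h y_1)$ in a form where the arguments are close to $\Id$ (up to rotations $R(y_0)$, $R(y_1)$, whose difference is controlled by \eqref{eq:rigidity}(iii)--(iv)), Taylor expand $D^2$ around $(\Id, \Id)$ using the $C^3$ regularity \eqref{eq: assumptions-D}(iv), identify the leading quadratic term as $Q_D^3(\nabla_h y_1 - \nabla_h y_0)$ via \eqref{eq:Q22} and Lemma~\ref{D-lin}(i), and absorb the cubic remainder into $Ch^\alpha \Vert \nabla_h y_1 - \nabla_h y_0 \Vert_{L^2(U)}^2$ using the $L^\infty$ smallness of $\nabla_h y_j - \Id$.

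For part~(ii), I would start from the definition $\mathcal{D}_h(y_0,y_1)^2 = h^{-4}\int_\Omega D^2(\nabla_h y_0, \nabla_h y_1)$, apply part~(i) with $U = \Omega$ to replace $D^2$ by $Q_D^3(\nabla_h y_1 - \nabla_h y_0)$ up to an error $Ch^\alpha \Vert \nabla_h y_1 - \nabla_h y_0\Vert_{L^2}^2/h^4$, and then relate $h^{-2}(\nabla_h y_1 - \nabla_h y_0)$ to $G^h(y_0) - G^h(y_1)$. The latter requires care: $\nabla_h y_j = R(y_j)(\Id + h^2 G^h(y_j))$, so the difference involves both $R(y_1) - R(y_0)$ and $G^h(y_1) - G^h(y_0)$; here I would use that $Q_D^3$ depends only on the symmetric part (Lemma~\ref{D-lin}(ii)) and that the rotational contributions, after symmetrization and using $\Vert R(y_j) - \Id\Vert_{L^\infty} \le Ch^\alpha$, contribute only lower-order terms — this is analogous to the linearization arguments in \cite{hierarchy, lecumberry}. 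The final bound $Ch^\alpha$ then follows from $\Vert G^h(y_0) - G^h(y_1)\Vert_{L^2(\Omega)} \le C$, which is a consequence of the energy bounds in $\mathscr{S}_h^M$ via \eqref{eq:rigidity}(i).

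For parts~(iii) and (iv), the strategy is identical but applied to $W$ instead of $D^2$. Using frame indifference $W(\nabla_h y) = W(R(y)^\top \nabla_h y) = W(\Id + h^2 G^h(y))$, Lemma~\ref{D-lin}(iii) with $F_0 = \Id$, $F_1 = \Id + h^2 G^h(y)$ gives $|W(\Id + h^2 G^h(y)) - \tfrac{1}{2}Q_W^3(h^2 G^h(y))| \le C h^6 |G^h(y)|^3$ (only the $k=3$ term survives since $F_0 = \Id$), whence $h^{-4}|W(\nabla_h y) - \tfrac{h^4}{2}Q_W^3(G^h(y))| \le Ch^2 |G^h(y)|^3$; integrating and using $\Vert G^h(y)\Vert_{L^\infty} \le Ch^\alpha/h^2$ from \eqref{eq:rigidity}(v) together with $\Vert G^h(y)\Vert_{L^2}^2 \le C$ yields $|\Delta(y)| \le Ch^\alpha$ — actually one should track the exponents carefully, but the $L^\infty$ bound on $h^2 G^h(y)$ being $\le Ch^\alpha$ is exactly what produces the $h^\alpha$ factor. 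For~(iv), I would instead apply Lemma~\ref{D-lin}(iii) with $F_0 = \Id + h^2 G^h(y_0)$ and $F_1 = \Id + h^2 G^h(y_1)$, so $F_1 - F_0 = h^2(G^h(y_1) - G^h(y_0))$ and $|F_0 - \Id| \le Ch^\alpha$; then $|W(F_1) - W(F_0) - \tfrac{1}{2}(Q_W^3(F_1 - \Id) - Q_W^3(F_0-\Id))| \le \sum_{k=1}^3 Ch^{\alpha(3-k)} (h^2)^k |G^h(y_1) - G^h(y_0)|^k$, and after dividing by $h^4$ and integrating, Hölder's inequality plus the uniform $L^2$-bound on the $G^h(y_j)$ and the $L^\infty$-bound $\Vert h^2 G^h(y_j)\Vert_{L^\infty} \le Ch^\alpha$ give $|\Delta(y_0) - \Delta(y_1)| \le Ch^\alpha \Vert G^h(y_0) - G^h(y_1)\Vert_{L^2(\Omega)} \le Ch^\alpha$.

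I expect the main obstacle to be the bookkeeping in part~(ii): carefully separating the genuine strain difference $G^h(y_1) - G^h(y_0)$ from the rotational mismatch $R(y_1) - R(y_0)$ inside the quadratic form $Q_D^3$, and verifying that the cross terms and the purely rotational terms are indeed absorbed into the $Ch^\alpha\Vert G^h(y_0) - G^h(y_1)\Vert_{L^2(\Omega)}^2$ error rather than contributing at leading order. This hinges on $Q_D^3$ seeing only symmetric parts and on the smallness estimates \eqref{eq:rigidity}(iii)--(vi); the argument parallels the standard linearization of nonlinear elasticity functionals but must be done at the level of the dissipation distance, keeping both deformations' rotations in play simultaneously.
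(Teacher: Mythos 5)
Your parts (iii)--(iv) follow essentially the paper's route (frame indifference of $W$ plus Lemma~\ref{D-lin}(iii) applied to $F_i=\Id+h^2G^h(y_i)$), but your arguments for the dissipation do not close as stated. For (i), the base point of the Taylor expansion must be the \emph{diagonal} point $(\nabla_h y_0,\nabla_h y_0)$: there $D^2$ and its first derivative vanish (for every $F$ near $SO(3)$, not just at $\Id$), so the remainder is genuinely cubic in $\nabla_h(y_1-y_0)$ and is absorbed via $\Vert\nabla_h(y_1-y_0)\Vert_{L^3(U)}^3\le Ch^\alpha\Vert\nabla_h(y_1-y_0)\Vert_{L^2(U)}^2$, after which $\tfrac12\partial^2_{F_1^2}D^2(\nabla_h y_0,\nabla_h y_0)$ is replaced by $\C^3_D$ with error $Ch^\alpha$ using \eqref{eq:rigidity}(v). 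Expanding around $(\Id,\Id)$, as you propose, leaves a remainder of size $|F_0-\Id|^3+|F_1-\Id|^3\sim h^{3\alpha}$ that is \emph{not} controlled by the difference $\nabla_h y_1-\nabla_h y_0$ on an arbitrary open $U$ (this locality is needed later for the positivity of $\mathcal{D}_h$); and introducing the rotations $R(y_i)$ here is counterproductive, since the target quadratic form is in the raw gradient difference and $R(y_1)-R(y_0)$ on a small $U$ is not controlled by $\Vert\nabla_h y_1-\nabla_h y_0\Vert_{L^2(U)}$.

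The route you propose for (ii) --- apply (i) with $U=\Omega$, divide by $h^4$, then convert $\nabla_h y_1-\nabla_h y_0$ into $G^h(y_1)-G^h(y_0)$ --- fails quantitatively. The error inherited from (i) is $h^{-4}Ch^\alpha\Vert\nabla_h y_1-\nabla_h y_0\Vert^2_{L^2(\Omega)}$, and by \eqref{eq:rigidity}(ii) one only has $\Vert\nabla_h y_i-\Id\Vert^2_{L^2(\Omega)}\le Ch^2$, so this error is of order $h^{\alpha-2}\to\infty$. Worse, the leading terms do not agree: since ${\rm sym}(R-\Id)=-\tfrac12(R-\Id)^\top(R-\Id)$ for $R\in SO(3)$, the term ${\rm sym}(R(y_1)-R(y_0))$ inside ${\rm sym}(\nabla_h y_1-\nabla_h y_0)$ is of order $h^2$ in $L^2$ --- the same order as $h^2G^h$ --- and in the limit it produces exactly the quadratic F\"oppl contributions $\tfrac12\nabla'v_i\otimes\nabla'v_i$ that distinguish $G^h(y_1)-G^h(y_0)$ from $h^{-2}(\nabla_h y_1-\nabla_h y_0)$; these rotational terms do not cancel and are not $O(h^\alpha)$. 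The missing idea is to invoke the separate frame indifference \eqref{eq: assumptions-D}(v) with the \emph{two different} rotations $Q_1=R(y_0)$, $Q_2=R(y_1)$ \emph{before} expanding, which turns the integrand into $D^2(\Id+h^2G^h(y_0),\Id+h^2G^h(y_1))$; one then Taylor-expands around its own diagonal exactly as in (i), with the cubic remainder controlled by $\Vert h^2G^h(y_i)\Vert_{L^\infty}\le Ch^\alpha$. Finally, a smaller slip in (iv): bounding $|F_0-\Id|^{3-k}$ purely in $L^\infty$ by $h^{\alpha(3-k)}$ gives, for $k=1$, a contribution of order $h^{2\alpha-2}\Vert G^h(y_1)-G^h(y_0)\Vert_{L^2}$, which exceeds $h^\alpha\Vert\cdot\Vert_{L^2}$ since $\alpha<1$; one must use the $L^\infty$ bound for only one such factor and the $L^2$ bound $\Vert R(y_0)^\top\nabla_h y_0-\Id\Vert_{L^2(\Omega)}\le Ch^2$ from \eqref{eq:rigidity}(i) for the other.
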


\begin{proof}
As a preparation, we observe that by the uniform bound on $\nabla_h y_0$, $\nabla_h y_1$ (see \eqref{eq:rigidity}(v)) and  a Taylor expansion \BBB at $(\nabla_h y_0,\nabla_h y_0)$ \EEE we obtain for all open subsets $U \subset \Omega$
\begin{align}\label{eq: Taylor1}
\Big|\int_U D^2(\nabla_h y_0, \nabla_h y_1) - \int_U H_{\nabla_h y_0}[\nabla_h (y_1 -  y_0),\nabla_h (y_1 -   y_0) ] \Big| \le C \Vert \nabla_h (y_1-   y_0) \Vert^3_{L^3(U)}.
\end{align}
\BBB We recall \EEE \eqref{eq: G(y)} and define $G(y_i) \BBB := \EEE h^2G^h(y_i) \BBB = R(y_i)^\top \nabla_h y_i - \Id\EEE$, $i=0,1$, for convenience. Using the separate frame indifference  \eqref{eq: assumptions-D}(v) we have 
$$\int_\Omega D^2(\nabla_h y_0, \nabla_h y_1) = \int_\Omega D^2\big(R(y_0)^\top\nabla_h y_0, R(y_1)^\top \nabla_h y_1\big).$$
 Thus, by \BBB $h^4 \mathcal{D}_h(y_0,y_1)^{2} =  \int_\Omega D^2(\nabla_h y_0, \nabla_h y_1) $ and  \EEE  again by Taylor expansion we also get
\begin{align}\label{eq: Taylor2}
\Big|h^4 \mathcal{D}_h(y_0,y_1)^{\BBB 2 \EEE} - \int_\Omega H_{R(y_0)^\top\nabla_h y_0}& [G(y_1) - G(y_0), G(y_1) - G(y_0)] \Big| \le C \Vert   G(y_1) - G(y_0) \Vert^3_{L^3(\Omega)}. 
\end{align} 

We now show (i). By the regularity of $D$ and \eqref{eq:rigidity}(v)  we get $\Vert H_{\nabla_h y_0} -   \C^3_D   \Vert_\infty \le Ch^\alpha$, where the fourth order tensor  $\C^3_D$ associated to $Q^3_D $ is defined in \eqref{eq: order4}. Therefore, we obtain
$$\Big|\int_U H_{\nabla_h y_0}[\nabla_h (y_1 -  y_0),\nabla_h (y_1 -   y_0) ] - \int_U Q^3_D(\nabla_h y_1 -  \nabla_h y_0) \Big| \le Ch^\alpha \Vert \nabla_h y_1 - \nabla_h y_0 \Vert^2_{L^2(U)}$$
\BBB for all open $U \subset \Omega$. \EEE By using \eqref{eq: Taylor1} and again \eqref{eq:rigidity}(v) we  get (i).

To see (ii), we observe \BBB $\Vert H_{R(y_0)^\top\nabla_h y_0} - \C^3_D\Vert_\infty\le C\Vert\nabla_h y_0 - R(y_0)\Vert_\infty \le Ch^{\alpha}$ by the regularity of $D$ and \eqref{eq:rigidity}(v),(vi).   Thus,  we get  
\begin{align}\label{eq: Taylor3}
\Big|\int_\Omega  H_{R(y_0)^\top\nabla_h y_0}[G(y_1) - G(y_0),G(y_1) - G(y_0) ] & - \int_\Omega Q^3_D \big(G(y_1) - G(y_0) \big) \Big| \notag\\& \le Ch^\alpha \Vert G(y_1) - G(y_0) \Vert_{L^2(\Omega)}^{2}.
\end{align}
In a similar fashion, \eqref{eq:rigidity}(v),(vi) also imply $\Vert G(y_i) \Vert_\infty \le Ch^\alpha$ for $i=0,1$ and thus 
\begin{align*}
\Vert   G(y_1) - G(y_0) \Vert^3_{L^3(\Omega)}  \le C h^\alpha \Vert G(y_1) - G(y_0) \Vert_{L^2(\Omega)}^{2}.
\end{align*}
This together with \eqref{eq: Taylor2}-\eqref{eq: Taylor3} (divided by $h^4$), and $G^h(y_i) = h^{-2}G(y_i)$ for $i=0,1$ yields 
$$
\Big|\mathcal{D}_h(y_0,y_1)^2 -  \int_\Omega  Q^3_D \big(G^h(y_0) - G^h(y_1)\big)  \Big| \le C h^\alpha \Vert G^h(y_0) - G^h(y_1) \Vert^2_{L^2(\Omega)}.
$$
This  shows the first inequality of (ii). To see the second inequality, we use \eqref{eq:rigidity}(i) and \eqref{eq: G(y)}.  \EEE 

We now show (iii) and (iv).  We use the frame indifference of $W$ and   \BBB Lemma \ref{D-lin}(iii) (with $F_i = R(y_i)^\top\nabla_h y_i = \Id + G(y_i)$ for $i=0,1$)   to obtain
\begin{align*}
|\Delta(y_1) - \Delta(y_0)|  & \le Ch^{-4}  \sum\nolimits_{k=1}^3  \int_\Omega |R(y_0)^\top\nabla_h y_0-\Id|^{3-k}|R(y_1)^\top\nabla_h y_1 - R(y_0)^\top\nabla_h y_0|^k \\
&  = Ch^{-4}  \sum\nolimits_{k=1}^3  \int_\Omega|G(y_0)|^{3-k}|G(y_1) - G(y_0)|^k \\
& \le Ch^{-4} \int_\Omega (|G(y_1)| + |G(y_0)|)^2|G(y_1) - G(y_0)|,
\end{align*}
where  $\Delta(y_0)$ and $\Delta(y_1)$ \EEE are defined in the statement of the lemma. The fact that $\Vert G(y_i) \Vert_\infty \le Ch^\alpha$ for $i=0,1$ (see \eqref{eq:rigidity}(v),(vi)) and H\"older's inequality yield
$$|\Delta(y_1) -  \Delta(y_0)| \le Ch^{\alpha-4} \big(\Vert   G(y_0) \Vert_{L^2(\Omega)} + \Vert G(y_1)   \Vert_{L^2(\Omega)} \big) \, \Vert G(y_1) - G(y_0) \Vert_{L^2(\Omega)}. $$
Using $G^h(y_i) = h^{-2}G(y_i)$ for $i=0,1$ and \eqref{eq:rigidity}(i) we obtain the first inequality of (iv). The second inequality follows again by \eqref{eq:rigidity}(i). Finally, to see (iii), we apply (iv)   for $y_0 = y$ and $y_1 = \id$, where we use $\Delta(y_1) = 0$. \EEE 
\end{proof}

\subsection{Metric spaces and their properties}\label{sec: metric}

In this section we  prove that $(\mathscr{S}^M_h, \mathcal{D}_h)$ and  $({\mathscr{S}}_0,{\mathcal{D}}_0)$ are complete metric spaces.   We start with the 3D setting. Recall the definition of  $\phi_h$ and $\mathcal{D}_h$ in \eqref{nonlinear energy-rescale}-\eqref{eq: D,D0-1}. \BBB As a preparation, \EEE  we address the positivity of $\mathcal{D}_h$.

\begin{lemma}[Positivity of $\mathcal{D}_h$]\label{lemma: positivity}
Let $M>0$ and let $h$ sufficiently small.   Let $y_0, y_1 \in \mathscr{S}^M_h$ with $\mathcal{D}_h(y_0,y_1) = 0$. Then $y_0 = y_1$.
\end{lemma}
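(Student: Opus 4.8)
The plan is to combine the strict positivity of the dissipation density on non-isometric strains, encoded in \eqref{eq: assumptions-D}(i), with the rigidity estimates of Lemma~\ref{lemma:rigidity} and the clamped boundary conditions \eqref{eq: nonlinear boundary conditions}. Since $\mathcal{D}_h(y_0,y_1)^2 = h^{-4}\int_\Omega D^2(\nabla_h y_0,\nabla_h y_1) = 0$ and $D\ge 0$, we have $D(\nabla_h y_0(x),\nabla_h y_1(x)) = 0$ for a.e.\ $x\in\Omega$, so the contrapositive of \eqref{eq: assumptions-D}(i) yields the pointwise strain identity
$$(\nabla_h y_0)^\top\nabla_h y_0 = (\nabla_h y_1)^\top\nabla_h y_1 \qquad \text{a.e.\ in }\Omega.$$
Because $p>3$, the gradients $\nabla_h y_i$ have continuous representatives, and by Lemma~\ref{lemma:rigidity}(v) they satisfy $\Vert\nabla_h y_i - \Id\Vert_{L^\infty(\Omega)}\le Ch^\alpha$ for $h$ small, so $\det\nabla_h y_i>0$ everywhere. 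Hence $Q := (\nabla_h y_1)(\nabla_h y_0)^{-1}$ is well defined, a direct computation with the strain identity shows $Q(x)\in SO(3)$, and a routine estimate (Cramer's rule, the algebra structure of $W^{1,p}\cap L^\infty$ for $p>3$) gives $Q\in W^{1,p}(\Omega;SO(3))$. Since $\nabla_h y_i$ and $\nabla y_i$ differ only by post-multiplication with the fixed diagonal matrix ${\rm diag}(1,1,h)$, the relation $\nabla_h y_1 = Q\,\nabla_h y_0$ is equivalent to $\nabla y_1 = Q\,\nabla y_0$.

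The heart of the argument is then a Liouville-type rigidity statement: I would show that $Q$ is a \emph{constant} rotation. Differentiating $\nabla y_1 = Q\nabla y_0$ (licit for $p>3$, after mollification if one prefers) and using the symmetry of the second derivatives of $y_0$ and $y_1$ gives $(\partial_m Q)(\partial_k y_0) = (\partial_k Q)(\partial_m y_0)$ for all $m,k$; writing $A_m := (\partial_m Q)Q^\top$, which is skew-symmetric because $QQ^\top = \Id$, this becomes $A_m(\partial_k y_1) = A_k(\partial_m y_1)$ for all $m,k$. Since the columns $\partial_1 y_1,\partial_2 y_1,\partial_3 y_1$ of the invertible matrix $\nabla y_1$ form a basis of $\R^3$, an elementary computation with skew matrices in $\R^3$ forces $A_m = 0$ for all $m$, i.e.\ $\nabla Q = 0$. (Alternatively, one may invoke the classical fact that a $W^{1,p}$ deformation of a connected domain with invertible gradient is determined by its pull-back metric up to a rigid motion.) Thus $Q\equiv Q_0\in SO(3)$ is constant, $\nabla(y_1 - Q_0 y_0) = 0$ on the connected set $\Omega$, and therefore $y_1 = Q_0 y_0 + b$ for some $b\in\R^3$.

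It remains to identify $(Q_0,b)$. On $\partial S\times I$ both $y_0$ and $y_1$ agree with the prescribed boundary map $g$ of \eqref{eq: nonlinear boundary conditions}, so $g = Q_0 g + b$ there; equivalently, the rigid motion $x\mapsto Q_0 x + b$ fixes $g(\partial S\times I)$ pointwise. Since $\partial S$ is not contained in a line (it bounds a nonempty bounded open set), one can pick three non-collinear points $x_1',x_2',x_3'\in\partial S$, and for $h$ small the images $g(x_1',0), g(x_2',0), g(x_3',0)$, being $O(h)$-close to $(x_1',0),(x_2',0),(x_3',0)$, are still non-collinear. A rotation in $SO(3)$ together with a translation fixing three non-collinear points is the identity, so $Q_0 = \Id$ and $b = 0$, i.e.\ $y_0 = y_1$.

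I expect the Liouville step to be the main obstacle: although morally standard, one must justify the pointwise differentiation at Sobolev regularity (this is where $p>3$ is used), and if one instead argues via the change of variables $y_1\circ y_0^{-1}$, whose gradient is rotation-valued, one first has to establish that $y_0$ is a global diffeomorphism onto its image on the thin cylinder $\Omega$ — which again follows from the uniform closeness of $\nabla_h y_0$ to $\Id$ provided by Lemma~\ref{lemma:rigidity} together with the boundary data. The remaining ingredients — the pointwise consequence of $\mathcal{D}_h(y_0,y_1) = 0$, the invertibility of the gradients, and the boundary-condition bookkeeping — are routine.
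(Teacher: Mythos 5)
Your argument is correct, but it takes a genuinely different route from the paper. You exploit the pointwise strict positivity \eqref{eq: assumptions-D}(i) to get $(\nabla_h y_0)^\top\nabla_h y_0=(\nabla_h y_1)^\top\nabla_h y_1$ a.e., factor out a rotation field $Q$, and then run a Liouville-type rigidity argument to show $Q$ is constant, finishing with the boundary conditions; all the steps you flag as delicate (weak differentiation of $Q\in W^{1,p}\cap L^\infty$ for $p>3$, the skew-matrix algebra forcing $A_m=0$, the non-collinearity of three boundary image points for small $h$) do go through, and your approach has the merit of isolating the exact gauge freedom — absent the clamping, $y_1$ and $y_0$ would differ by a single rigid motion. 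The paper instead never uses assumption (i): it works on the unrescaled domain $\Omega_h$, covers it by cubes of sidelength $h$, combines the Taylor expansion of $D^2$ near $SO(3)$ (Lemma \ref{lemma: metric space-properties}(i)) with Korn's inequality on each cube having a face where $w_0=w_1$, absorbs the $Ch^\alpha\Vert\nabla w_1-\nabla w_0\Vert^2_{L^2(Q)}$ error for $h$ small to conclude $\nabla w_0=\nabla w_1$ there, and propagates inward cube by cube from the clamped lateral boundary. The paper's route is more quantitative and robust (it would survive weakening (i) to a local statement near $SO(3)$, and avoids any Liouville theorem at the cost of a uniform-Korn-constant covering argument), while yours is conceptually shorter and uses the global positivity hypothesis in an essential way.
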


\begin{proof}
It is convenient to formulate the problem for the original (not rescaled) functions $w_0$ and $w_1$ defined on $\Omega_h = S \times (-\frac{h}{2},\frac{h}{2})$. To explain the main idea, we first assume that $\Omega_h$ is the union of pairwise disjoint cubes of sidelength $h$ up to a set of negligible measure.  Denote the family of cubes by $\mathcal{Q}$. By $\mathcal{Q}_1 \subset \mathcal{Q}$ we denote the cubes whose boundaries share at least one face with $\partial S \times (-\frac{h}{2},\frac{h}{2})$. Let $\mathcal{Q}_2 \subset \mathcal{Q}\setminus \mathcal{Q}_1$ be the cubes whose boundaries share at least one face with a cube in $\mathcal{Q}_1$. In a similar fashion, we define $\mathcal{Q}_i$, $i \ge 2$, and find $\mathcal{Q} = \bigcup_{i=1}^I \mathcal{Q}_i$ for some $I \in \N$. 

We now first show that $w_0 = w_1$ on each $Q \in \mathcal{Q}_1$. To this end, fix $Q \in \mathcal{Q}_1$. From Lemma \ref{lemma: metric space-properties}(i) (in terms of $w_0,w_1$ instead of $y_0,y_1$) we get
\begin{align}\label{eq: positivity1}
\Big|\int_Q D^2(\nabla w_0,\nabla w_1) - \int_Q  Q^3_D (\nabla w_1 -  \nabla w_0)) \Big| \le Ch^\alpha \Vert \nabla  w_1-  \nabla  w_0 \Vert^2_{L^2(Q)}. 
\end{align}
Since $w_1 = w_0$ on $\partial S \times (-\frac{h}{2}, \frac{h}{2})$, we get that $w_1 = w_0$ on at least one face of $\partial Q$. Then Korn's inequality (see, e.g., \cite[Proposition 1]{hierarchy}) implies
$$\Vert \nabla w_1 - \nabla w_0 \Vert^2_{L^2(Q)} \le C \Vert {\rm sym}(\nabla w_1 - \nabla w_0) \Vert^2_{L^2(Q)}. $$
Together with Lemma \ref{D-lin}(ii) this shows
\begin{align}\label{eq: positivity2}
\Vert \nabla w_1 - \nabla w_0 \Vert^2_{L^2(Q)} \le C \int_Q Q^3_D (\nabla w_1 -  \nabla w_0). 
\end{align}
For $h$ sufficiently small, \eqref{eq: positivity1}-\eqref{eq: positivity2} \BBB along \EEE with $\int_Q D^2(\nabla w_0,\nabla w_1) = 0$ show $\nabla w_1 = \nabla w_0$ a.e.\ on $Q$. Since  $w_1 = w_0$ on at least one face of $\partial Q$, this also gives $w_1 = w_0$ a.e.\ on $Q$, as desired. 

We now proceed \BBB iteratively \EEE to show that $w_1 = w_0$ on each $Q \in \mathcal{Q}$: suppose that the property has already been shown for all $Q \in \bigcup_{i=1}^j \mathcal{Q}_i$. Then $w_1 = w_0$ on each $Q \in \mathcal{Q}_{j+1}$ follows from the above arguments noting that $w_1 = w_0$ on at least one face of  $\partial Q$ since $w_1 = w_0$ on all squares  $Q \in \bigcup_{i=1}^j \mathcal{Q}_i$. 

This shows $w_1 = w_0$ on $\Omega_h$ in the case that $\Omega_h$ is the union of pairwise disjoint cubes of sidelength $h$ up to a set of negligible measure. In the general case, we may cover $\Omega_h$ by cubes of sidelength $h$, again denoted by $\mathcal{Q}$, such that for each $Q \in \mathcal{Q}$ the set $Q \cap \Omega_h$ is bilipschitzly equivalent to a cube of sidelength $h$ with a  controlled Lipschitz constant. We note that the constant in Korn's inequality can be chosen uniformly for these sets. We may again decompose $\mathcal{Q}$ into pairwise disjoint families $(\mathcal{Q}_i)_i$ and show \BBB iteratively \EEE  that $w_1$ coincides with $w_0$ on all cubes. 
\end{proof}

\begin{lemma}[Properties of $(\mathscr{S}^M_h, \mathcal{D}_h)$ and $\phi_h$]\label{th: metric space}
\BBB Let $M>0$. \EEE For $h>0$ sufficiently small we have
\begin{itemize}
\item[(i)] $(\mathscr{S}^M_h, \mathcal{D}_h)$ is a complete metric space.
\item[(ii)] Compactness: If $(y_n)_n \subset \mathscr{S}^M_h$, then $(y_n)_n$ admits a subsequence converging weakly in $W^{2,p}(\Omega;\R^3)$ and  strongly in  $W^{1,\infty}(\Omega;\R^3)$.
\item[(iii)]  Topologies: The \BBB topology \EEE induced by $\mathcal{D}_h$ coincides with the weak $W^{2,p}(\Omega;\R^3)$ topology. 
\item[(iv)] Lower semicontinuity: $\mathcal{D}_h(y_n,y) \to 0$ \   \ $\Rightarrow$   \ \ $\liminf_{n \to \infty} \phi_h(y_n) \ge \phi_h(y)$.
\end{itemize}
\end{lemma}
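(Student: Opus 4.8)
The plan is to verify the four claims in order, using the estimates of Lemma \ref{lemma: metric space-properties}, the rigidity of Lemma \ref{lemma:rigidity}, and the positivity of Lemma \ref{lemma: positivity} as the main building blocks.

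\emph{Step 1: metric space structure and the topology.} First I would observe that $\mathcal{D}_h$ is symmetric and satisfies the triangle inequality; these pass directly from the corresponding properties \eqref{eq: assumptions-D}(ii),(iii) of $D$ via the integral definition \eqref{eq: D,D0-1} and Minkowski's inequality (the latter because $\mathcal{D}_h$ is an $L^2$-type norm of the pointwise quantity $D(\nabla_h y_0,\nabla_h y_1)$, using the triangle inequality for $D$ pointwise). Positivity, i.e.\ $\mathcal{D}_h(y_0,y_1)=0 \Rightarrow y_0 = y_1$, is exactly Lemma \ref{lemma: positivity}. To identify the topology, the key is Lemma \ref{lemma: metric space-properties}(ii): combined with Lemma \ref{D-lin}(ii) (positive definiteness of $Q_D^3$ on symmetric matrices), a Korn-type inequality on $\Omega$ with the clamped boundary conditions of \eqref{eq: nonlinear boundary conditions}, and rigidity, one shows that $\mathcal{D}_h(y_0,y_1)^2$ is comparable (up to constants depending on $h$) to $\Vert \mathrm{sym}(\nabla_h y_0 - \nabla_h y_1)\Vert^2_{L^2(\Omega)}$, hence to $\Vert \nabla_h y_0 - \nabla_h y_1\Vert^2_{L^2(\Omega)}$. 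Thus $\mathcal{D}_h$-convergence is equivalent to $L^2$-convergence of the scaled gradients. Since $\phi_h(y)\le M$ forces (by \eqref{assumptions-P}(iii)) a uniform $W^{2,p}(\Omega;\R^3)$ bound on $\mathscr{S}_h^M$ and $W^{2,p}\hookrightarrow W^{1,\infty}$ compactly for $p>3$, on the bounded-energy set the $L^2$-gradient topology, the $W^{1,\infty}$ strong topology, and the weak $W^{2,p}$ topology all coincide; this gives (ii) and (iii) simultaneously.

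\emph{Step 2: completeness.} Given a $\mathcal{D}_h$-Cauchy sequence $(y_n)_n \subset \mathscr{S}_h^M$, by Step 1 it is bounded in $W^{2,p}$ and Cauchy in $W^{1,\infty}$, hence converges strongly in $W^{1,\infty}$ and weakly in $W^{2,p}$ to some limit $y$. The boundary conditions $\mathscr{S}_h$ are preserved under $W^{1,\infty}$ (in fact weak $W^{2,p}$) convergence since they are linear constraints closed under weak limits (note \eqref{eq: nonlinear boundary conditions} prescribes only the trace of $y$, which is stable). It remains to show $\phi_h(y)\le M$; this is exactly claim (iv), which I treat next, and which closes the loop.

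\emph{Step 3: lower semicontinuity of $\phi_h$.} Suppose $\mathcal{D}_h(y_n,y)\to 0$. By Step 1, $\nabla_h y_n \to \nabla_h y$ in $W^{1,\infty}$ and $\nabla^2_h y_n \rightharpoonup \nabla^2_h y$ weakly in $L^p(\Omega)$. The force term $-h^{-1}\int_\Omega f y_{n,3}$ converges by strong convergence. The second-gradient term $h^{-\alpha p}\int_\Omega P(\nabla^2_h y_n)$ is weakly lower semicontinuous in $L^p$ by convexity of $P$ (\eqref{assumptions-P}(ii)) and Tonelli's theorem. The elastic term $h^{-4}\int_\Omega W(\nabla_h y_n)$ converges by strong (uniform) convergence of $\nabla_h y_n$ and continuity of $W$ near $SO(3)$ (which is where all these gradients live, by rigidity \eqref{eq:rigidity}(v)), so it even passes to the limit with equality. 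Summing, $\liminf_n \phi_h(y_n) \ge \phi_h(y)$.

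\emph{Main obstacle.} The genuinely delicate point is Step 1's identification of the $\mathcal{D}_h$-topology with a gradient topology uniformly enough to make Step 2 work: one must combine Lemma \ref{lemma: metric space-properties}(ii) (valid only on $\mathscr{S}_h^M$ for $h$ small), Korn's inequality with a constant that is \emph{uniform in $h$} despite the thinness of $\Omega$ (this is where the clamped boundary conditions and the rescaled gradient $\nabla_h$ are essential — a direct Korn inequality on $\Omega_h$ would blow up, but after rescaling and with the boundary data it does not), and the compact Sobolev embedding. Everything else is a routine assembly of the cited lemmas; the uniform-in-$h$ bookkeeping is the part demanding care, though here we only need the statement for fixed small $h$, so the constants may depend on $h$, which simplifies matters considerably.
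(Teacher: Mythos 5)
Your Steps 2--4 are essentially the paper's proof: compactness (ii) from the uniform $W^{2,p}$ bound and $p>3$, lower semicontinuity (iv) from convexity of $P$ plus continuity of $W$ under the uniform convergence of the scaled gradients, and the metric axioms from \eqref{eq: assumptions-D} together with Lemma \ref{lemma: positivity}. The genuine gap is in Step~1, on which (iii), the completeness in (i), and your version of (iv) all lean: the claimed two-sided comparability of $\mathcal{D}_h(y_0,y_1)^2$ with $\Vert \nabla_h y_0-\nabla_h y_1\Vert^2_{L^2(\Omega)}$ is not obtained by the argument you sketch. Lemma \ref{lemma: metric space-properties}(i) gives
\begin{align*}
\Big| h^4\mathcal{D}_h(y_0,y_1)^2-\int_\Omega Q_D^3(\nabla_h y_1-\nabla_h y_0)\Big| \le Ch^\alpha\,\Vert \nabla_h y_1-\nabla_h y_0\Vert^2_{L^2(\Omega)}
\end{align*}
with $C$ \emph{independent} of $h$, and $Q_D^3$ controls only ${\rm sym}(\nabla_h y_1-\nabla_h y_0)$. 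To deduce a lower bound in terms of the full gradient you must absorb the error term using Korn's inequality for the scaled gradient with lateral clamping, i.e.\ you need $Ch^\alpha\,C_K(h)<c$, where $C_K(h)$ is the (squared-norm) Korn constant on $\Omega$ for $\nabla_h$. That constant degenerates like $h^{-2}$ (test with a bending field $u=(-x_3\nabla' v, v)$, $v\in C^\infty_c(S)$), so $h^\alpha C_K(h)\to\infty$ because $\alpha<1$: the absorption fails for \emph{every} sufficiently small $h$, so allowing $h$-dependent constants does not rescue it. This is precisely why the paper's positivity proof works on cubes of sidelength $h$, where Korn is scale invariant, and only in the degenerate situation $\int_Q D^2=0$, where no quantitative propagation of constants across the $O(1/h)$ cubes is needed.

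The conclusion of Step~1 is nevertheless true, and the repair is the paper's soft argument, which you should substitute for the norm equivalence: one direction of (iii) is immediate (weak $W^{2,p}$ convergence on $\mathscr{S}_h^M$ gives strong $W^{1,\infty}$ convergence by compact embedding, hence $\mathcal{D}_h\to 0$ by dominated convergence); for the converse, given $\mathcal{D}_h(y_n,y)\to 0$, extract by (ii) a subsequence converging weakly in $W^{2,p}$ to some $\tilde y$, note $\mathcal{D}_h(y_n,\tilde y)\to 0$ by the easy direction, and conclude $y=\tilde y$ from the triangle inequality and Lemma \ref{lemma: positivity}. With this compactness-plus-uniqueness argument in place of your quantitative comparability, your completeness and lower semicontinuity steps go through essentially as written.
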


\begin{proof}
\BBB We start with (ii). \EEE We recall   \eqref{assumptions-P}(iii) and \eqref{nonlinear energy-rescale}, and find $\Vert \PPP \nabla^2_h \EEE y \Vert^p_{L^p(\Omega)} \le CMh^{p\alpha}$ for all $y \in \mathscr{S}^M_h$. This together with   \eqref{eq:rigidity}(v)  and the boundary conditions \eqref{eq: nonlinear boundary conditions}    shows  $\sup_{y \in \mathscr{S}^M_h}\Vert y \Vert_{W^{2,p}(\Omega)} < \infty$. Since $p>3$, (ii) follows from a standard compactness argument. 

\BBB We now show (iii). (a) We first suppose that $y_n \rightharpoonup y$ weakly in $W^{2,p}(\Omega;\R^3)$. As $p>3$, this \EEE implies $y_n \to y$ strongly in $W^{1,\infty}(\Omega;\R^3)$ and thus $\mathcal{D}_h(y_n,y) \to 0$ by dominated convergence. \BBB (b) On the other hand, assume that $\mathcal{D}_h(y_n,y) \to 0$.  Item \EEE (ii) yields that a subsequence \BBB (not relabeled) \EEE of $(y_n)_n$ converges weakly in $W^{2,p}(\Omega;\R^3)$ to some $\tilde{y}$. \BBB By (a) we   find $\mathcal{D}_h(y_n,\tilde{y}) \to 0$. \EEE  The triangle inequality, \PPP see \eqref{eq: assumptions-D}(iii), \EEE then shows $\mathcal{D}_h(y,\tilde{y}) \le \lim_{n\to \infty} (\mathcal{D}_h(y,y_n) + \mathcal{D}_h(y_n,\tilde{y})) = 0$. \BBB This yields $y = \tilde{y}$ by Lemma \ref{lemma: positivity}, \EEE  and therefore $y_n \rightharpoonup y$ weakly in $W^{2,p}(\Omega;\R^3)$. \PPP As the limit is independent of the subsequence, the convergence actually holds for the whole sequence. \EEE

The proof of (iv) follows again by \second the dominated convergence theorem \EEE and the convexity of $P$, see \eqref{assumptions-P}(ii).

\BBB Finally, we show (i). \EEE Apart from the positivity, all properties of a metric follow directly from \eqref{eq: assumptions-D} and \eqref{eq: D,D0-1}.  The positivity has has been addressed in Lemma \ref{lemma: positivity}. \BBB It therefore remains to show that $(\mathscr{S}^M_h, \mathcal{D}_h)$ is complete. Let $(y_k)_k \PPP \subset \mathscr{S}_h^M\EEE$ be a Cauchy sequence with respect to $\mathcal{D}_h$. By  (ii) and (iii) we find $y \in W^{2,p}(\Omega;\R^3)$ and a subsequence   (not relabeled)   such that $\lim_{k\to \infty}\mathcal{D}_h(y_k,y) = 0$. By (iv) \BBB and the trace theorem \EEE we get $y \in \mathscr{S}_h^M$. The fact that $(y_k)_k$ is a Cauchy sequence now implies that the whole sequence $y_k$ converges to $y$ with respect to $\mathcal{D}_h$. This concludes the proof. \EEE
\end{proof}

Similar properties can be derived in the   2D setting.   Recall the definition of $\phi_0$ and $\mathcal{D}_0$   in   \eqref{eq: phi0} and \eqref{eq: D,D0-2}, respectively.   For convenience, we will use the notations 
\begin{align}\label{eq: important notation}
e(u) = {\rm sym}(\nabla' u), \ \ \ \ \ \ \ \  B(v_1,v_2) = \frac{1}{2}{\rm sym}(\nabla' v_1 \otimes \nabla' v_2).
\end{align}

\begin{lemma}[Properties of $({\mathscr{S}}_0,  {\mathcal{D}}_0)$ and ${\phi}_0$]\label{th: metric space-lin}
We have:

\begin{itemize}
\item[(i)] $({\mathscr{S}}_0,  {\mathcal{D}}_0)$ is a complete metric space.
\item[(ii)] Compactness: If $(u_n,v_n)_n \subset {\mathscr{S}}_0$ is a sequence with $\sup_n {\phi}_0(u_n,v_n)<+\infty$, then $(u_n,v_n)_n$ is bounded in $W^{1,2}(S;\R^2) \times W^{2,2}(S)$. 
\item[(iii)] Topologies: The metrics ${\mathcal{D}}_0$ and $d((u,v),(u',v') ) := \Vert u - u'\Vert_{W^{1,2}(S)} + \Vert v - v' \Vert_{W^{2,2}(S)}$ are equivalent.  In particular, $\Vert v - v' \Vert_{W^{2,2}(S)} \le C{\mathcal{D}}_0((u,v),(u',v') )$  for $C=C(S)>0$. 
\item[(iv)] Continuity: ${\mathcal{D}}_0( (u_n,v_n), (u,v)) \to 0$ \ \   $\Rightarrow$ \ \  $\lim_{n \to \infty} {\phi}_0 (u_n,v_n) =  {\phi}_0(u,v)$.
\end{itemize}
\end{lemma}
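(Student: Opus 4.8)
The plan is to establish the four items following the scheme already used for the 3D space in Lemma~\ref{th: metric space}, but now the arguments are simpler because $\phi_0$ and $\mathcal{D}_0$ are (essentially) quadratic expressions in linear strain quantities. I would treat (ii) first, then (iii), then (iv), and finally assemble (i) from the previous three, mirroring the order in the proof of Lemma~\ref{th: metric space}.

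\textbf{Compactness (ii).} Let $(u_n,v_n) \subset \mathscr{S}_0$ with $\sup_n \phi_0(u_n,v_n) =: M_0 < \infty$. By Lemma~\ref{D-lin}(ii), $Q_W^2(G) \ge c|{\rm sym}(G)|^2$, so the bending term controls $\Vert {\rm sym}((\nabla')^2 v_n)\Vert_{L^2(S)} = \Vert (\nabla')^2 v_n \Vert_{L^2(S)}$ (the Hessian is automatically symmetric). Together with the boundary conditions $v_n = \hat v$, $\nabla' v_n = \nabla' \hat v$ on $\partial S$ (see \eqref{eq: BClinear}) and the Poincaré inequality, this gives a uniform $W^{2,2}(S)$ bound on $v_n$. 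Since $p>2$ and $S \subset \R^2$, $W^{2,2}(S) \hookrightarrow W^{1,q}(S)$ for all $q<\infty$, so $\nabla' v_n \otimes \nabla' v_n$ is uniformly bounded in $L^2(S;\R^{2\times 2})$; then the membrane term $Q_W^2(e(u_n) + \tfrac12 \nabla' v_n \otimes \nabla' v_n) \ge c|e(u_n) + \tfrac12 \nabla' v_n \otimes \nabla' v_n|^2$ bounds $\Vert e(u_n)\Vert_{L^2(S)}$, and Korn's inequality (with the clamped condition $u_n = \hat u$ on $\partial S$) yields a uniform $W^{1,2}(S;\R^2)$ bound on $u_n$. (If $f \not\equiv 0$, the force term $-\int_S f v_n$ is absorbed by Young's inequality using $f \in L^\infty$ and the $W^{2,2}$ bound on $v_n$.)

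\textbf{Equivalence of metrics (iii) and continuity (iv).} For (iii), one inequality follows from the coercivity $Q_D^2(G) \ge c|{\rm sym}(G)|^2$: writing $\mathcal{D}_0((u_0,v_0),(u_1,v_1))^2$ using the notation $e(u)$, $B(v,v)$ of \eqref{eq: important notation}, the bending part controls $\Vert (\nabla')^2(v_1-v_0)\Vert_{L^2(S)}$, hence $\Vert v_1 - v_0\Vert_{W^{2,2}(S)}$ via Poincaré and the common boundary data; then $\Vert \nabla' v_1 \otimes \nabla' v_1 - \nabla' v_0 \otimes \nabla' v_0\Vert_{L^2(S)} \le C(\Vert \nabla' v_0\Vert_{L^\infty} + \Vert\nabla' v_1\Vert_{L^\infty})\Vert \nabla'(v_1-v_0)\Vert_{L^2}$, so the membrane part together with Korn controls $\Vert u_1-u_0\Vert_{W^{1,2}(S)}$. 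The reverse inequality follows directly from the upper bound $Q_D^2(G) \le C|G|^2$ and the same product estimate. Here one must note that the $L^\infty$ bounds on $\nabla' v_i$ are not uniform over $\mathscr{S}_0$; however, along any sequence one restricts to a $\phi_0$-sublevel set via (ii), on which $W^{2,2} \hookrightarrow W^{1,\infty}$ fails in 2D --- so I would instead keep the $W^{1,q}$ ($q<\infty$) bound and use $\Vert \nabla' v_1 \otimes \nabla' v_1 - \nabla' v_0 \otimes \nabla' v_0\Vert_{L^{2}} \le C\Vert \nabla' v_1 + \nabla' v_0\Vert_{L^{2r'}}\Vert\nabla'(v_1-v_0)\Vert_{L^{2r}}$ with $r$ slightly above $1$, which suffices for the topological equivalence on sublevel sets. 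For (iv), $\mathcal{D}_0$-convergence gives $d$-convergence by (iii), i.e. $u_n \to u$ in $W^{1,2}$ and $v_n \to v$ in $W^{2,2}$; then $e(u_n) + B(v_n,v_n) \to e(u) + B(v,v)$ strongly in $L^2$ (the product $B$ is continuous from $W^{2,2}$ by the compact embedding into $W^{1,4}$), $(\nabla')^2 v_n \to (\nabla')^2 v$ in $L^2$, and $\int_S f v_n \to \int_S f v$; since $\phi_0$ is a continuous quadratic functional of these quantities, $\phi_0(u_n,v_n) \to \phi_0(u,v)$.

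\textbf{Completeness (i).} All metric axioms for $\mathcal{D}_0$ except positivity follow immediately from \eqref{eq: assumptions-D}-type properties of $Q_D^2$ and the definition \eqref{eq: D,D0-2}; positivity is contained in the coercivity argument of (iii) (if $\mathcal{D}_0((u_0,v_0),(u_1,v_1))=0$ then $v_1 = v_0$ by the bending term plus boundary data, hence $B(v_1,v_1)=B(v_0,v_0)$, and then $e(u_1-u_0)=0$ with zero boundary data forces $u_1=u_0$ by Korn). Given a $\mathcal{D}_0$-Cauchy sequence $(u_n,v_n)$, it is $d$-Cauchy by (iii), hence converges in $W^{1,2}(S;\R^2)\times W^{2,2}(S)$ to some $(u,v)$; the boundary conditions in \eqref{eq: BClinear} pass to the limit by continuity of traces, so $(u,v) \in \mathscr{S}_0$, and $\mathcal{D}_0((u_n,v_n),(u,v))\to 0$ again by (iii). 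The main obstacle --- mild as it is --- is purely bookkeeping: since $W^{2,2}(S) \not\hookrightarrow W^{1,\infty}(S)$ in two dimensions, the product term $B(v,v)$ must be handled with the Gagliardo--Nirenberg/Sobolev embedding $W^{2,2}(S)\hookrightarrow W^{1,q}(S)$, $q<\infty$, rather than a naive $L^\infty$ estimate, but this is enough for all the estimates above because one always works with differences times a factor that is $W^{2,2}$-bounded (hence $W^{1,q}$-bounded for every finite $q$).
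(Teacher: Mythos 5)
Your proposal is correct and follows essentially the same route as the paper's proof: coercivity of $Q_W^2,Q_D^2$ on symmetric matrices (Lemma \ref{D-lin}(ii)) combined with Korn's and Poincar\'e's inequalities for the clamped boundary data, the two-dimensional embedding $W^{2,2}(S)\hookrightarrow W^{1,4}(S)$ to control the quadratic term $\nabla' v\otimes\nabla' v$, and completeness/positivity deduced from the metric equivalence in (iii). The only cosmetic differences are the stray ``$p>2$'' (no $p$ enters here) and the unnecessary restriction to $\phi_0$-sublevel sets in (iii) --- along a $\mathcal{D}_0$-convergent sequence the needed $W^{1,4}$ bounds on $\nabla' v_n$ already follow from the bending part of the metric, which is exactly how the paper argues.
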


\begin{proof}
We first prove (ii). By Lemma \ref{D-lin}(ii) and the \BBB triangle inequality   we find
$$\Vert e(u) \Vert^2_{L^2(S)} \le C\int_S Q_W^2 \big( e(u) + B(v,v) \big)  + C\Vert B(v,v) \Vert^2_{L^2(S)}$$
for a    universal  constant $C>0$. \EEE This together with Korn's   and Poincar\'e's inequality, and   the fact that $u = \hat{u}$ on $\partial S$ gives
\begin{align}\label{eq: linear comp1}
\Vert  u \Vert^2_{W^{1,2}(S)} &\le  C \Vert   u- \hat{u} \Vert^2_{W^{1,2}(S)} + C\Vert   \hat{u} \Vert^2_{W^{1,2}(S)} \le   C\Vert e(u-\hat{u}) \Vert^2_{L^2(S)} + C\Vert   \hat{u} \Vert^2_{W^{1,2}(S)}\notag\\
&  \le C\Vert   \hat{u} \Vert^2_{W^{1,2}(S)} + C\Vert B(v,v) \Vert^2_{L^2(S)} + C\int_S Q_W^2\big( e(u) + B(v,v)\big).
\end{align}
Now consider $(u,v) \in {\mathscr{S}}_0$ with ${\phi}_0(u,v) \le M$ for $M>0$. Then by Lemma \ref{D-lin}(ii) \BBB and \eqref{eq: phi0} \EEE we find $\Vert (\nabla')^2 v \Vert_{L^2(S)} \le C$,  where the constant depends on $M$.  Since $v = \hat{v}$ and $\nabla' v = \nabla' \hat{v}$ on $\partial S$, \BBB by an argumentation similar to \eqref{eq: linear comp1} \EEE we also get 
\begin{align}\label{eq: linear comp1XXX}
\Vert v \Vert_{W^{2,2}(S)} \BBB \le \Vert v - \hat{v} \Vert_{W^{2,2}(S)} + \Vert \hat{v} \Vert_{W^{2,2}(S)} \le C\Vert (\nabla')^2(v - \hat{v}) \Vert_{L^{2}(S)} + \Vert \hat{v} \Vert_{W^{2,2}(S)} \EEE \le C,
 \end{align} where the constant additionally depends on \BBB $\Vert \hat{v} \Vert_{W^{2,2}(S)}$. \EEE   Then  ${\phi}_0(u,v) \le M$ together with  \eqref{eq: linear comp1} \BBB and the embedding $W^{2,2}  \hookrightarrow W^{1,4}$ (in dimension two) \EEE yields   $\Vert  u \Vert_{W^{1,2}(S)} \le C$, where the constant also depends on $\Vert \hat{u} \Vert_{W^{1,2}(S)}$.  This shows property (ii).

We now address (iii). \BBB As a preparation, consider \EEE $(u_0,v_0), (u_1,v_1) \in {\mathscr{S}}_0$.  Arguing similarly as before, using Lemma \ref{D-lin}(ii) and Poincar\'e's inequality, we find 
\begin{align}\label{eq: linear comp2}
\Vert v_0 - v_1 \Vert_{W^{2,2}(S)}^2 \le \BBB C \EEE  \int_S Q_D^2\big((\nabla')^2  v_0-   (\nabla')^2 v_1 \big).
\end{align}
 Here, we used that $v_0$ and $v_1$ as well as their first derivatives coincide on $\partial S$. Repeating the argumentation leading to \eqref{eq: linear comp1} we find  
\begin{align}\label{eq: linear comp3}
\Vert u_0 - u_1\Vert^2_{W^{1,2}(S)}& \le C\Vert  e( u_0) - e( u_1) \Vert^2_{L^2(S)}\\
&  \le  C\int_S \big(|B(v_0,v_0) - B(v_1,v_1)|^2 +   Q_D^2 \big( e(u_0 - u_1) + B(v_0,v_0) - B(v_1,v_1)\big)\big).\notag
\end{align}
Here, in the first step, we used Korn's and Poincar\'e's inequality. In the second step, \BBB we applied \EEE Lemma \ref{D-lin}(ii)  \BBB  and the triangle inequality. \EEE

\BBB We now suppose that $\mathcal{D}_0((u_n,v_n),(u,v)) \to 0$  (see \eqref{eq: D,D0-2}). Then $v_n \to v$ strongly in $W^{2,2}(S)$ by  \eqref{eq: linear comp2}. This also implies   $v_n \to v$ in $W^{1,4}(S)$ and thus, in view of \eqref{eq: linear comp3}, we find that $u_n \to u$ strongly in $W^{1,2}(S;\R^2)$. Therefore, the \EEE sequence converges with respect to  the metric $d$ \BBB defined in the statement of \EEE  (iii). \BBB Conversely, \EEE given a sequence $(u_n,v_n)_n$ with $u_n \to u$ strongly in $W^{1,2}(S;\R^2)$ and $v_n \to v$ in $W^{2,2}(S)$, we also get $v_n \to v$ in $W^{1,4}(S)$. We observe that \BBB $\mathcal{D}_0$ \EEE  is continuous with respect to this convergence, i.e., ${\mathcal{D}}_0( (u_n,v_n),(u,v) ) \to 0$.   This yields the equivalence of the metrics, and \eqref{eq: linear comp2} shows the second part of (iii).   

The proof of (iv) is similar, noting that also ${\phi}_0$ is continuous with respect to   the topology  induced by $\mathcal{D}_0$, see \eqref{eq: phi0}.  

We finally prove (i). The positivity \BBB and the completeness \EEE follow from  (iii).  Thus, the only thing left to show is the triangle inequality. It can be derived by an elementary computation, using that ${\mathcal{D}}^2_0$ is   the sum of two quadratic forms. We omit the details. 
\end{proof}

\begin{rem}\label{rem: small computation}
{\normalfont

For later purposes, we remark that the proof of (ii) can be generalized: a similar argument shows that for given $(\tilde{u},\tilde{v}) \in {\mathscr{S}}_0$ and $v \in W^{2,2}(S)$  we get
$$\int_S | e(\tilde{u}) + {\rm sym}(\nabla' \tilde{v} \otimes \nabla' v)|^2 + \int_S | (\nabla')^2 \tilde{v} |^2    + \Vert v \Vert_{W^{\BBB 2,2}(S)}   \le M \ \,   \Rightarrow \ \,    \Vert \tilde{u} \Vert_{W^{1,2}(S)}    + \Vert \tilde{v} \Vert_{W^{2,2}(S)} \le C,$$
where $C$ depends on $M$, $\hat{u}$ and $\hat{v}$. This follows by repeating the argument in \eqref{eq: linear comp1}-\eqref{eq: linear comp1XXX} and using $\Vert B(\tilde{v},{v})\Vert_{L^2(S)} \le C\Vert \tilde{v} \Vert_{W^{1,4}(S)}\Vert v \Vert_{W^{1,4}(S)} \le C\Vert \tilde{v} \Vert_{W^{\BBB 2,2}(S)}\Vert v \Vert_{W^{2,2}(S)}$.  
}\end{rem}

\subsection{\BBB Generalized geodesics and properties of slopes in the 2D setting\EEE}\label{sec: convexity}

In this section,  we \BBB first \EEE derive convexity properties for the energy and the dissipation distance \BBB in the 2D setting along generalized geodesics. (We refer to \cite[Section 3.2, Section 3.4]{MOS} for a discussion about generalized geodesics in a related problem.)  Afterwards, we derive fundamental properties for the local slope which will be instrumental to  \EEE use the theory in \cite{AGS}. For $M >0$, we define   the sublevel sets  ${\mathscr{S}}_0^M = \lbrace (u,v) \in {\mathscr{S}}_0: \ {\phi}_0(u,v) \le M \rbrace$.

 \begin{lemma}[Convexity and generalized geodesics in the 2D setting]\label{th: convexity2}
 Let $M >0$. Then there exist smooth functions $\Phi^1,\Phi^2_M:[0,\infty) \to [0,\infty)$ satisfying   $\lim_{t \to 0}\Phi^1(t)/t =1$ and $\lim_{t \to 0}\Phi^2_M(t)/t =0$ such that for all $(u_0,v_0) \in {\mathscr{S}}_0^M$ \BBB and all $(u_1,v_1) \in {\mathscr{S}}_0$ \EEE we have  
\begin{align*}
(i)& \ \    \mathcal{D}_0\big((u_0,v_0), (u_s,v_s)\big)  \le  s \Phi^1\big(\mathcal{D}_0\big((u_0,v_0), (u_1,v_1)\big)\big), \\
(ii) & \ \ \phi_0(u_s,v_s)  \le (1-s) \phi_0(u_0,v_0) + s\phi_0(u_1,v_1) +s\Phi^2_M\big(\mathcal{D}_0\big((u_0,v_0), (u_1,v_1)\big)\big),  
\end{align*}
where $u_s := (1-s) u_0 + su_1$ and  $v_s := (1-s) v_0 + sv_1$,  $s \in [0,1]$. 
\end{lemma}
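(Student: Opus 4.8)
First I would set up the linear interpolation $(u_s,v_s) = (1-s)(u_0,v_0) + s(u_1,v_1)$ and compute the difference of the two relevant quadratic expressions directly. For the dissipation distance, recall from \eqref{eq: D,D0-2} that ${\mathcal{D}}_0^2$ is a sum of two positive semidefinite quadratic forms $Q_D^2$ evaluated at the ``membrane-type'' argument $e(u_1) - e(u_0) + B(v_1,v_1) - B(v_0,v_0)$ and the ``bending-type'' argument $(\nabla')^2 v_1 - (\nabla')^2 v_0$ (using the notation $B$ from \eqref{eq: important notation}). The bending part is \emph{linear} in the pair, so along the interpolation the increment from $(u_0,v_0)$ to $(u_s,v_s)$ is exactly $s$ times the full increment. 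The membrane part is the only source of nonlinearity, coming from the quadratic term $B(v,v) = \frac12 {\rm sym}(\nabla'v \otimes \nabla'v)$. The key algebraic identity I would exploit is
\begin{align*}
B(v_s,v_s) - B(v_0,v_0) = s\big(B(v_1,v_1) - B(v_0,v_0)\big) - s(1-s) B(v_1 - v_0, v_1 - v_0),
\end{align*}
which follows by expanding $v_s = v_0 + s(v_1-v_0)$. An entirely analogous identity holds for $Q_W^2(e(u)+B(v,v))$ appearing in ${\phi}_0$ (see \eqref{eq: phi0}), together with the fact that the pure bending term $\frac{1}{24}Q_W^2((\nabla')^2 v)$ is quadratic, hence genuinely convex, so its value at $v_s$ is bounded by the convex combination with a \emph{negative} correction $-\tfrac{s(1-s)}{24}Q_W^2((\nabla')^2 v_1 - (\nabla')^2 v_0) \le 0$; the linear force term is exactly affine.

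For (i): using the triangle-type decomposition of ${\mathcal{D}}_0$ and the identity above, one writes ${\mathcal{D}}_0((u_0,v_0),(u_s,v_s))$ as the $L^2$-norm-like quantity built from $s$ times the full increment plus the $O(s(1-s))$ correction term $B(v_1-v_0,v_1-v_0)$ in the membrane slot and the analogous linear term in the $e(u)$ slot. By the Minkowski inequality for the norm induced by the (fixed, positive semidefinite) quadratic form $Q_D^2$, this is bounded by $s\,{\mathcal{D}}_0((u_0,v_0),(u_1,v_1)) + Cs(1-s)\|\,|\nabla'(v_1-v_0)|^2\|_{L^2(S)} + Cs(1-s)\|e(u_1-u_0)\|_{L^2(S)}$. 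The quantity $\|e(u_1-u_0)\|_{L^2(S)}$ is controlled by $\mathcal{D}_0((u_0,v_0),(u_1,v_1))$ plus $\|B(v_1,v_1)-B(v_0,v_0)\|_{L^2(S)}$, and $\|\,|\nabla'(v_1-v_0)|^2\|_{L^2(S)} \le \|\nabla'(v_1-v_0)\|_{L^4(S)}^2 \le C\|v_1-v_0\|_{W^{2,2}(S)}^2 \le C\,{\mathcal{D}}_0((u_0,v_0),(u_1,v_1))^2$ by the Sobolev embedding $W^{2,2}(S)\hookrightarrow W^{1,4}(S)$ in dimension two and Lemma \ref{th: metric space-lin}(iii). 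Collecting these, the right-hand side takes the form $s\,\Phi^1(r)$ with $r = {\mathcal{D}}_0((u_0,v_0),(u_1,v_1))$ and $\Phi^1(r) := r + Cr^2 + Cr\sqrt{1+r}$ or a similarly smooth explicit function with $\Phi^1(r)/r \to 1$ as $r\to 0$; the dependence on $M$ only enters through the bound on $\|v_0\|_{W^{2,2}(S)}$ from Lemma \ref{th: metric space-lin}(ii), which is harmless here (so in fact $\Phi^1$ need not depend on $M$, consistent with the statement).

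For (ii): expanding ${\phi}_0(u_s,v_s)$ using the two identities above, the convex combination $(1-s){\phi}_0(u_0,v_0)+s{\phi}_0(u_1,v_1)$ appears as the main term, the pure-bending and force contributions produce only nonpositive corrections, and the membrane term $\frac12\int_S Q_W^2(e(u_s)+B(v_s,v_s))$ produces a remainder of the schematic form $\int_S \big[ \text{(cross terms)}\times s(1-s)\big]$ involving products of $e(u_1-u_0)$, $B(v_1,v_1)-B(v_0,v_0)$, $e(u_s)+B(v_s,v_s)$, and $B(v_1-v_0,v_1-v_0)$. Each such product is estimated by Cauchy--Schwarz in $L^2(S)$: one factor is controlled by ${\mathcal{D}}_0((u_0,v_0),(u_1,v_1))$ (via Lemma \ref{th: metric space-lin}(iii) and the argument for (i)), and the ``base-point'' factors $e(u_s)+B(v_s,v_s)$, $e(u_0)+B(v_0,v_0)$ are controlled by $\sqrt{M}$ plus lower-order terms, using ${\phi}_0(u_0,v_0)\le M$, the bounds from Lemma \ref{th: metric space-lin}(ii), Remark \ref{rem: small computation}, and positive definiteness of $Q_W^2$ on symmetric matrices (Lemma \ref{D-lin}(ii)). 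This yields a bound of the form $s\,\Phi^2_M(r)$ with, e.g., $\Phi^2_M(r) := C_M(r^2 + r^3 + r^4)$ — a smooth function vanishing at $0$ faster than linearly, so $\Phi^2_M(r)/r \to 0$ as $r \to 0$, as required.

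The routine but slightly delicate bookkeeping — and the step I expect to be the main obstacle — is keeping track of the mixed terms in (ii) so that \emph{every} remainder carries a genuine factor $s(1-s)$ (never a bare $s$) and at least a factor $r^2$ (so that $\Phi^2_M(r)/r\to 0$), while simultaneously ensuring all ``base-point'' factors are absorbed into the constant $C_M$ via the a priori energy bound rather than being left to depend on $(u_1,v_1)$. The quadratic structure of both ${\mathcal{D}}_0^2$ and the membrane integrand makes this purely an exercise in expanding squares and applying Hölder/Sobolev, so no conceptual difficulty arises; one only needs the interpolation identity for $B(\cdot,\cdot)$, Minkowski's inequality for quadratic-form norms, the two-dimensional embedding $W^{2,2}\hookrightarrow W^{1,4}$, and the coercivity/equivalence facts already recorded in Lemma \ref{D-lin}(ii) and Lemma \ref{th: metric space-lin}.
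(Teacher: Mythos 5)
Your overall strategy coincides with the paper's: the same interpolation identity
$B(v_s,v_s)-B(v_0,v_0)=s\big(B(v_1,v_1)-B(v_0,v_0)\big)-s(1-s)B(v_1-v_0,v_1-v_0)$,
expansion of the quadratic forms $Q_D^2$ and $Q_W^2$, the two-dimensional embedding $W^{2,2}(S)\hookrightarrow W^{1,4}(S)$ together with Lemma \ref{th: metric space-lin}(iii) to get $\Vert B(v_1-v_0,v_1-v_0)\Vert_{L^2(S)}\le C\,\mathcal{D}_0((u_0,v_0),(u_1,v_1))^2$, and Cauchy--Schwarz plus the bound $\phi_0(u_0,v_0)\le M$ to absorb the base-point factors into $\Phi^2_M(t)=C_M(t^2+t^3+t^4)$. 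Part (ii) is in order and matches the paper's Step for \eqref{eq: suffices to show2}.

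Part (i), as written, contains a genuine error. The membrane argument of $\mathcal{D}_0^2$ between $(u_0,v_0)$ and $(u_s,v_s)$ is $e(u_s)-e(u_0)+B(v_s,v_s)-B(v_0,v_0)$, and since $e(u_s)-e(u_0)=s\,(e(u_1)-e(u_0))$ \emph{exactly}, the only deviation from ``$s$ times the full increment'' is the single term $-s(1-s)B(v_1-v_0,v_1-v_0)$, whose $L^2$-norm is $O(sD^2)$ with $D=\mathcal{D}_0((u_0,v_0),(u_1,v_1))$. There is no correction in the $e(u)$ slot, so your additional remainder $Cs(1-s)\Vert e(u_1-u_0)\Vert_{L^2(S)}$ should not appear. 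Since $\Vert e(u_1-u_0)\Vert_{L^2(S)}$ is only $O(D)$ (not $O(D^2)$), keeping it leads to a candidate such as your $\Phi^1(r)=r+Cr^2+Cr\sqrt{1+r}$, for which $\Phi^1(r)/r\to 1+C\neq 1$ as $r\to 0$ (and which, through the control of $\Vert e(u_1-u_0)\Vert$, would also make $\Phi^1$ depend on $M$). The normalization $\lim_{t\to0}\Phi^1(t)/t=1$ is not cosmetic: it is precisely what Lemma \ref{lemma: slopes} uses to replace $\mathcal{D}_0$ by $\Phi^1(\mathcal{D}_0)$ in the denominator of the local slope, so a limit $1+C$ would break that representation. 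Dropping the spurious term gives $\Phi^1(t)=t+Ct^2$ (the paper takes $\Phi^1(t)=\sqrt{t^2+Ct^3+Ct^4}$ from the expansion of $\int_S Q_D^2(G_0^s-G_0^0)$), and the rest of your argument then goes through.
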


 \begin{proof}
 For convenience, we first introduce some abbreviations and provide some preliminary estimates. We let $D = \mathcal{D}_0((u_0,v_0), (u_1,v_1))$. We  use the notation defined in \eqref{eq: important notation} and also set $B_{\rm diff} = B(v_0-v_1,v_0-v_1)$. Particularly,   by Lemma \ref{th: metric space-lin}(iii) and a Sobolev embedding   we observe that 
  \begin{align}\label{eq: calcu-v}
\Vert B_{\rm diff} \Vert_{L^2(S)}  \le C \Vert \nabla' v_0 - \nabla' v_1 \Vert^2_{L^4(S)} \le C\Vert v_0 - v_1 \Vert^2_{W^{2,2}(S)}  \le CD^2.  
\end{align}
For brevity, \BBB we also \EEE   introduce
\begin{align}\label{eq: H-1}
G^s_0 = e(u_s ) + B(v_s,v_s) 
\end{align}
for $s \in [0,1]$. (The notation $G_0$ is borrowed from \cite{hierarchy}, see also Lemma \ref{lemma: compacti2} below.)  
With the definition of $\mathcal{D}_0$ in \eqref{eq: D,D0-2} and Lemma \ref{D-lin}(ii) we get
\begin{align}\label{eq: H-2}
\Vert G_0^1 - G_0^0 \Vert_{L^2(S)}^2 \le C \int_S Q^2_D(G_0^1-G^0_0) \le \BBB C \EEE D^2.
\end{align} 
Similarly, we observe \BBB by \eqref{eq: phi0}  (with $f \equiv 0$), \EEE Lemma \ref{D-lin}(ii), and the fact that $(u_0,v_0) \in \mathscr{S}^M_0$ 
\begin{align}\label{eq: H-3}
\Vert G^0_0 \Vert_{L^2(S)}^2 \le C \int_S Q_W^2(G^0_0) \le C\phi_0(u_0,v_0)\le CM.
\end{align}

We now start with the proof of (i).  First, we observe  
$$  \frac{1}{12}   \int_S Q^2_D( (\nabla')^2 v_s - (\nabla')^2 v_0 )  =  s^2   \frac{1}{12}   \int_S Q^2_D ( (\nabla')^2 v_1 - (\nabla')^2 v_0 ).$$ 
We will show that \BBB there exists $C>0$ independent of $s$ such that \EEE
\begin{align}\label{eq: suffices to show1}
\int_S  Q^2_D(G^s_0 - G^0_0) \le  s^2 \int_S  Q^2_D(G^1_0-G_0^0) + C s^2 D^3 + Cs^2 D^4 
\end{align}
\BBB for $s \in [0,1]$. \EEE Then  recalling the definition of $\mathcal{D}_0$ in \eqref{eq: D,D0-2},   (i) follows for the function $\Phi^1(t) = \sqrt{t^2 + Ct^3 + Ct^4}$.  To show \eqref{eq: suffices to show1},   recalling \eqref{eq: important notation}, we obtain by an elementary expansion  
\begin{align*}
  B( v_s,v_s)  - B(v_0,v_0) & =  s \big( B( v_1,v_1 )  -  B( v_0,v_0 ) \big) - s(1-s)\big( B( v_0,v_0) +   B( v_1, v_1 )  -2 B( v_0, v_1) \big) \notag\\
&= s (B(v_1,v_1 ) -  B( v_0,v_0)) - s(1-s) B( v_0 - v_1, v_0 - v_1) \notag \\ 
&= s ( B(v_1,v_1)   -  B(v_0,v_0)) - s(1-s) B_{\rm diff},  
\end{align*}
where the last equality follows from the definition of $B_{\rm diff}$. By recalling \eqref{eq: H-1}   this implies
\begin{align}\label{eq: expand-v}
G_0^s-G^0_0  = s \big( G_0^1 - G^0_0 - (1-s) B_{\rm diff}\big).  
\end{align}
Recalling also $\C^2_D$ in  \eqref{eq: order4},   an expansion   and the Cauchy-Schwartz inequality   then yield
 \begin{align*}
 \int_S  Q^2_D(G^s_0-G^0_0)   & = s^2 \int_S   Q^2_D (G_0^1-G^0_0) +  s^2(1-s)^2  \int_S  Q^2_D(B_{\rm diff})  \\ & \ \ \ - 2s^2(1-s) \int_S\C^2_D[  G_0^1-G^0_0, B_{\rm diff} ]\\
 & \le  s^2 \int_S   Q^2_D(G^1_0-G^0_0) + Cs^2 \Vert B_{\rm diff} \Vert_{L^2(S)}^2  + Cs^2 \Vert G_0^1-G^0_0 \Vert_{L^2(S)} \Vert B_{\rm diff}\Vert_{L^2(S)} .
  \end{align*}
 By using \eqref{eq: calcu-v} and \eqref{eq: H-2} we now see that \eqref{eq: suffices to show1} holds. This concludes the proof of (i).

\BBB Recall the definition of $\phi_0$ in \eqref{eq: phi0}. \EEE We show (ii) for the function $\Phi^2_M(t) = C\sqrt{M}t^2 + \BBB Ct^3 + \EEE Ct^4$ for some $C>0$. Due to convexity of   $s \mapsto \int_S \frac{1}{24} \EEE Q^3_W( \BBB (\nabla')^2 \EEE v_s)$, it suffices to show
 \begin{align}\label{eq: suffices to show2}
\frac{1}{2}   \int_S  Q_W^2(G^s_0) \, dx' &\le    \frac{1-s}{2} \int_S  Q_W^2(G_0^0)  +    \frac{s}{2}  \int_S  Q_W^2(G^1_0)    +C\sqrt{M}s D^2 \BBB +CsD^3\EEE + Cs^2D^4.
\end{align}  
In view of  \eqref{eq: order4} and  \eqref{eq: expand-v}, an elementary expansion yields  
\begin{align*}
Q_W^2(G^s_0) &= Q^2_W( G^0_0 + G^s_0 - G^0_0)=  Q_W^2\big( (1-s) G^0_0 +s G^1_0 - s(1-s) B_{\rm diff}\big) \\
& = (1-s)Q^2_W(G^0_0) + sQ^2_W(G^1_0) - (1-s)s Q^2_W(G^0_0-G^1_0) \\
& \ \ \ -  2s(1-s)^2C^2_W[G^0_0,B_{\rm diff}] -  2s^2(1-s) C^2_W[G^1_0,B_{\rm diff}] + s^2(1-s)^2 Q_W^2(B_{\rm diff}).
 \end{align*}
 Then  taking the integral  we obtain
 \begin{align*}
 \int_S   Q_W^2(G^s_0)  &\le (1-s) \int_S   Q_W^2(G^0_0)   + s \int_S   Q_W^2(G^1_0)   \\ &  \ \ \  +Cs\big( \Vert G^0_0 \Vert_{L^2(S)}   + \Vert G^1_0 \Vert_{L^2(S)}  \big)\Vert B_{\rm diff} \Vert_{L^2(S)}    + Cs^2  \Vert B_{\rm diff} \Vert^2_{L^2(S)}.
\end{align*}
\BBB By using $\Vert G^1_0 \Vert_{L^2(S)} \le \Vert G^0_0 \Vert_{L^2(S)} + \Vert G^1_0 - G^0_0 \Vert_{L^2(S)}$, \EEE  \eqref{eq: calcu-v}, \EEE and \eqref{eq: H-2}-\eqref{eq: H-3} we get \eqref{eq: suffices to show2}. This concludes the proof of (ii).  
 \end{proof}

 Now we derive representations and properties of the local slope corresponding to ${\phi}_0$. \BBB Recall the notation ${\mathscr{S}}_0^M = \lbrace (u,v) \in {\mathscr{S}}_0: \ {\phi}_0(u,v) \le M \rbrace$. \EEE

\begin{lemma}[\BBB Local slope in the 2D setting\EEE]\label{lemma: slopes}
\BBB Let $M>0$. \EEE The local slope for the energy ${\phi}_0$ admits the representation 
$$ |\partial {\phi}_{\BBB 0 \EEE }|_{{\mathcal{D}}_0}(u,v) = \underset{(u',v') \neq (u,v)}{\sup_{(u',v') \in {\mathscr{S}}_0}} \   \frac{\Big({\phi}_0(u,v) - {\phi}_0(u',v') - \Phi^2_M\big({\mathcal{D}}_0((u,v),(u',v')) \big) \Big)^+}{\Phi^1\big({\mathcal{D}}_0((u,v),(u',v'))\big)} $$
\BBB for all $(u,v) \in {\mathscr{S}}^M_0$, \EEE  where $\Phi^1$ and $\Phi^2_M$ are the functions from Lemma \ref{th: convexity2}. The slope is a strong upper gradient for ${\phi}_0$.
\end{lemma}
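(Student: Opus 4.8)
The statement has two parts: the representation formula for the local slope, and the fact that it is a strong upper gradient. For the representation, I would first establish the inequality "$\geq$" and then "$\leq$". The "$\geq$" direction should be immediate once we check that the right-hand side is a legitimate lower bound for the $\limsup$ defining $|\partial\phi_0|_{\mathcal D_0}$: given any $(u',v')\neq(u,v)$ in $\mathscr S_0$, consider the interpolation $(u_s,v_s)$ from Lemma \ref{th: convexity2} with endpoints $(u,v)$ and $(u',v')$. By Lemma \ref{th: convexity2}(i), $\mathcal D_0((u,v),(u_s,v_s))\le s\,\Phi^1(\mathcal D_0((u,v),(u',v')))$, and by Lemma \ref{th: convexity2}(ii), $\phi_0(u,v)-\phi_0(u_s,v_s)\ge s\big(\phi_0(u,v)-\phi_0(u',v')-\Phi^2_M(\mathcal D_0((u,v),(u',v')))\big)$. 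Hence
\begin{align*}
\frac{(\phi_0(u,v)-\phi_0(u_s,v_s))^+}{\mathcal D_0((u,v),(u_s,v_s))} \ge \frac{\big(\phi_0(u,v)-\phi_0(u',v')-\Phi^2_M(\mathcal D_0((u,v),(u',v')))\big)^+}{\Phi^1(\mathcal D_0((u,v),(u',v')))},
\end{align*}
and since $(u_s,v_s)\to(u,v)$ in $\mathcal D_0$ as $s\to 0$ (using Lemma \ref{th: metric space-lin}(iii)), passing to the $\limsup$ over these particular competitors yields the claimed lower bound, so $|\partial\phi_0|_{\mathcal D_0}(u,v)$ is at least the supremum on the right.

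**The reverse inequality.** For "$\leq$", fix a sequence $(u_n,v_n)\to(u,v)$ in $\mathcal D_0$ (equivalently in $W^{1,2}\times W^{2,2}$ by Lemma \ref{th: metric space-lin}(iii)) realizing the $\limsup$ in the definition of the slope. Since convergence in $\mathcal D_0$ implies $\phi_0(u_n,v_n)\to\phi_0(u,v)$ by Lemma \ref{th: metric space-lin}(iv), we have $\sup_n\phi_0(u_n,v_n)<\infty$, so all $(u_n,v_n)$ lie in some sublevel set $\mathscr S_0^{M'}$. The key point is to bound the difference quotient $\frac{(\phi_0(u,v)-\phi_0(u_n,v_n))^+}{\mathcal D_0((u,v),(u_n,v_n))}$ by the supremum on the right-hand side, up to an error that vanishes. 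One approaches this by writing, for each $n$, $\phi_0(u,v)-\phi_0(u_n,v_n) \le \phi_0(u,v)-\phi_0(u_n,v_n) - \Phi^2_M(\mathcal D_0((u,v),(u_n,v_n))) + \Phi^2_M(\mathcal D_0((u,v),(u_n,v_n)))$; since $\Phi^2_M(t)/t\to 0$, the last term divided by $\mathcal D_0((u,v),(u_n,v_n))$ tends to $0$, and the first block divided by $\Phi^1(\mathcal D_0((u,v),(u_n,v_n)))$ is bounded by the supremum; finally $\Phi^1(t)/t\to 1$ lets us replace $\Phi^1(\mathcal D_0)$ by $\mathcal D_0$ asymptotically. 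Combining these and taking $n\to\infty$ gives $|\partial\phi_0|_{\mathcal D_0}(u,v)\le$ RHS. The one subtlety to watch is the case where the supremum on the right is infinite (then there is nothing to prove) or where $\mathcal D_0((u,v),(u_n,v_n))$ could behave badly — but positivity of $\mathcal D_0$ and the equivalence of metrics rule out degeneracies along a sequence with $(u_n,v_n)\neq(u,v)$.

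**Strong upper gradient.** For the final assertion, I would invoke the general principle (cf.\ \cite[Corollary 2.4.10]{AGS} or the discussion in \cite[Section 1.3]{AGS}) that the local slope $|\partial\phi|_{\mathcal D}$ of a functional $\phi$ on a complete metric space is a strong upper gradient whenever $\phi$ is, e.g., lower semicontinuous and the slope is itself lower semicontinuous, or — more directly applicable here — when $\phi$ satisfies a $\lambda$-convexity-type condition along a family of curves connecting any two points. Lemma \ref{th: convexity2} provides exactly such generalized-geodesic convexity estimates (the functions $\Phi^1,\Phi^2_M$ encode an "almost convexity" of $\phi_0$ and an "almost affinity/contractivity" of $\mathcal D_0$ along the linear interpolation), and together with completeness of $(\mathscr S_0,\mathcal D_0)$ from Lemma \ref{th: metric space-lin}(i) and the continuity of $\phi_0$ from Lemma \ref{th: metric space-lin}(iv), the abstract machinery of \cite{AGS} guarantees that $|\partial\phi_0|_{\mathcal D_0}$ is a strong upper gradient. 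Concretely, one checks that the representation formula just proved shows $|\partial\phi_0|_{\mathcal D_0}$ is lower semicontinuous with respect to $\mathcal D_0$-convergence (it is a supremum of continuous functions of $(u,v)$, using continuity of $\phi_0$ and of $\mathcal D_0$), and then applies \cite[Theorem 1.2.5 / Corollary 2.4.10]{AGS}.

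**Main obstacle.** The genuinely delicate step is the reverse inequality in the representation: one must ensure that the asymptotic replacements $\Phi^1(t)\sim t$ and $\Phi^2_M(t)=o(t)$ interact correctly with the $\limsup$, i.e., that no competitor sequence $(u_n,v_n)$ can produce a difference quotient exceeding the supremum in the limit. This is where the precise behavior of $\Phi^1,\Phi^2_M$ near $0$ from Lemma \ref{th: convexity2} is essential, and care is needed because $\Phi^2_M$ depends on the sublevel parameter $M$ — one fixes $M$ large enough that $(u,v)$ and a tail of the sequence all lie in $\mathscr S_0^M$, which is legitimate precisely because $\phi_0$ is continuous along $\mathcal D_0$-convergent sequences.
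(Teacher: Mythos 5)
Your proof of the representation formula is correct and follows essentially the same route as the paper: the lower bound via the interpolants $(u_s,v_s)$ together with Lemma \ref{th: convexity2}(i),(ii), and the upper bound via the asymptotics $\Phi^1(t)/t\to 1$ and $\Phi^2_M(t)/t\to 0$ as $t\to 0$. (The detour through a sublevel set $\mathscr{S}_0^{M'}$ containing the competitors is unnecessary: Lemma \ref{th: convexity2} only requires the \emph{base point} $(u,v)$ to lie in $\mathscr{S}_0^M$, which is the standing hypothesis, and the upper bound uses nothing about the competitors beyond $\mathcal{D}_0((u,v),(u_n,v_n))\to 0$.)

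The strong upper gradient part contains a genuine gap. Lower semicontinuity of $|\partial\phi_0|_{\mathcal{D}_0}$ together with continuity of $\phi_0$ is not a sufficient condition for the strong upper gradient property, and \cite[Corollary 2.4.10]{AGS} cannot be quoted verbatim here because the correction terms $\Phi^1,\Phi^2_M$ are not of the standard $\lambda$-convex form $-\tfrac{\lambda}{2}s(1-s)\,d^2$, and $\Phi^2_M$ depends on the sublevel parameter $M$. What is actually needed is the following. By \cite[Lemma 1.2.5]{AGS} the local slope is a priori only a \emph{weak} upper gradient; it upgrades to a strong one provided $\phi_0\circ z$ is absolutely continuous along every absolutely continuous curve $z:(a,b)\to\mathscr{S}_0$ with $|\partial\phi_0|_{\mathcal{D}_0}(z)\,|z'|_{\mathcal{D}_0}\in L^1(a,b)$. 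To verify this, one restricts to the compact image $\mathscr{S}'=z([a,b])$, chooses $M$ with $\mathscr{S}'\subset\mathscr{S}_0^M$ (possible since the curve has finite $\mathcal{D}_0$-diameter), and uses the representation formula just proved to dominate the global slope relative to $\mathscr{S}'$ by $C_1\,|\partial\phi_0|_{\mathcal{D}_0}+C_2$, where $C_1=\sup_{t\le {\rm diam}(\mathscr{S}')}\Phi^1(t)/t$ and $C_2=\sup_{t\le {\rm diam}(\mathscr{S}')}\Phi^2_M(t)/t$ are finite. Since the global slope of the $\mathcal{D}_0$-lower semicontinuous functional $\phi_0$ is a strong upper gradient, the integrability of $|\partial\phi_0|_{\mathcal{D}_0}(z)\,|z'|_{\mathcal{D}_0}$ transfers to the global slope and yields the absolute continuity of $\phi_0\circ z$. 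This comparison with the global slope on the curve's image is the step missing from your argument.
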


\Proof \BBB We follow \EEE the lines of the proofs of Theorem 2.4.9 and Corollary 2.4.10 in \cite{AGS}. Let \BBB $M>0$ and $(u,v) \in \mathscr{S}_0^M$.  \EEE Let $\Phi^1,\Phi^2_M$ be the functions from Lemma \ref{th: convexity2}, and recall that $\lim_{t \to 0}\Phi^1(t)/t =1$ and $\lim_{t \to 0}\Phi^2_M(t)/t =0$.  We recall the definition of the local slope in Definition \ref{main def2} and obtain 
\begin{align*}
|\partial {\phi}_0|_{{\mathcal{D}}_0}(u,v) & = \limsup_{(u',v') \to (u,v)} \frac{({\phi}_0(u,v) - {\phi}_0(u',v'))^+}{{\mathcal{D}}_0((u,v),(u',v'))} \\&
= \limsup_{(u',v') \to (u,v)} \frac{\Big({\phi}_0(u,v) - {\phi}_0(u',v') - \Phi^2_M\big({\mathcal{D}}_0((u,v),(u',v'))\big)\Big)^+}{\Phi^1\big({\mathcal{D}}_0((u,v),(u',v'))\big)}   \\
&\le \sup_{(u',v') \neq (u,v)} \  \frac{\Big({\phi}_0(u,v) - {\phi}_0(u',v') - \Phi^2_M\big({\mathcal{D}}_0((u,v),(u',v'))\big)\Big)^+}{\Phi^1\big({\mathcal{D}}_0((u,v),(u',v'))\big)},
\end{align*}
where the supremum is taken over functions in $\mathscr{S}_0$.  In the second equality we used  that $(u',v') \to (u,v)$ means ${\mathcal{D}}_0((u,v),(u',v')) \to 0$, and the fact that   $\lim_{t \to 0}\Phi^1(t)/t =1$ and $\lim_{t \to 0}\Phi^2_M(t)/t =0$.

 To see the other inequality, \BBB we fix $(u',v') \neq (u,v)$. It \EEE is not restrictive to suppose that 
\begin{align*}
\phi_0(u,v) - \phi_0(u',v') - \Phi^2_M\big({\mathcal{D}}_0((u,v),(u',v'))\big)>0.
\end{align*}
Define $u_s = (1-s)u + s u'$ and $v_s = (1-s)v + s v'$ for $s \in [0,1]$. \BBB By \EEE Lemma \ref{th: convexity2} with $(u_0,v_0) = (u,v)$ and $(u_1,v_1) = (u',v')$  we get
 \begin{align*}
\frac{\phi_0(u,v) - \phi_0(u_s,v_s)}{\mathcal{D}_0((u_0,v_0),(u_s,v_s))}  &\ge \frac{ s{\phi}_0(u,v) - s{\phi}_0(u',v') - s\Phi^2_M\big({\mathcal{D}}_0((u,v),(u',v'))\big) }{s\Phi^1\big({\mathcal{D}}_0((u,v),(u',v'))\big)}. 
\end{align*}
Note $(u_s,v_s) \to (u,s)$ as $s \to 0$ with respect to the topology induced by ${\mathcal{D}}_0$, \PPP see Lemma \ref{th: metric space-lin}(iii). \EEE \BBB Therefore,
$$|\partial {\phi}|_{{\mathcal{D}}_0}(u,v)   \ge \frac{ {\phi}_0(u,v) - {\phi}_0(u',v') -\Phi^2_M\big({\mathcal{D}}_0((u,v),(u',v'))\big) }{\Phi^1\big({\mathcal{D}}_0((u,v),(u',v'))\big)}. $$
The claim \EEE now follows by taking the supremum with respect to $(u',v')$.

It remains to show that $|\partial {\phi}_0|_{{\mathcal{D}}_0}$ is a strong upper gradient. Let us first recall from
\cite[Lemma 1.2.5]{AGS} that in  the complete metric space \BBB $({\mathscr{S}}_0,{\mathcal{D}}_0)$, \EEE the global slope
\begin{align}\label{global slope}
\mathcal{G}_{{\phi}_0}(u,v) := \underset{(u',v') \neq (u,v)}{\sup_{(u',v') \in {\mathscr{S}}_0}} \frac{({\phi}_0(u,v) - {\phi}_0(u',v'))^+}{{\mathcal{D}}_0((u,v),(u',v'))}
\end{align}
is a strong upper gradient for ${\phi}_0$ since ${\phi}_0$ is ${\mathcal{D}}_0$-lower semicontinuous (see Lemma \ref{th: metric space-lin}(iv)). Moreover,  \cite[Lemma 1.2.5]{AGS} also states  that     $|\partial {\phi}_0|_{{\mathcal{D}}_0}$ is a weak upper gradient for ${\phi}_0$ in the sense of \cite[Definition 1.2.2]{AGS}. We do not repeat the definition of weak upper gradients, but only mention that weak upper gradients are also strong upper gradients if for each absolutely continuous curve $z:(a,b) \to {\mathscr{S}}_0$ \BBB with respect to $\mathcal{D}_0$ \EEE satisfying  $|\partial {\phi}_0|_{{\mathcal{D}}_0}(z)|z'|_{{\mathcal{D}}_0} \in L^1(a,b)$, the function ${\phi}_0 \circ z$ is absolutely continuous. 

To check that ${\phi}_0 \circ z$ is absolutely continuous, we first extend the  curve $z$ continuously to $[a,b]$ and introduce the compact metric space $\mathscr{S}' = z([a,b])$ with the metric induced by  $\mathcal{D}_0$. \BBB Choose $M>0$ such that $\mathscr{S}'  \subset \mathscr{S}_0^M$, which is possible due to \eqref{eq: phi0}, \eqref{eq: D,D0-2}, and the fact that ${\rm diam}(\mathscr{S}'):= \sup_{s,t \in [a,b]} \mathcal{D}_0(z(s),z(t)) < +\infty$. \EEE  Let  $\mathcal{G}'_{{\phi}_0}$ be the global slope as introduced in \eqref{global slope} with respect to $\mathscr{S}'$ (instead of ${\mathscr{S}}_0$).  The representation of the local slope implies
$$\mathcal{G}'_{{\phi}_0}(u,v) =\underset{(u',v') \neq (u,v)}{\sup_{(u',v') \in \mathscr{S}'}}  \frac{({\phi}_0(u,v) - {\phi}_0(u',v'))^+}{{\mathcal{D}}_0((u,v),(u',v'))} \le C_1|\partial {\phi}_0|_{{\mathcal{D}}_0}(u,v) + C_2 $$
for all $(u,v) \in \mathscr{S}' \BBB \subset \mathscr{S}_0^M \EEE $, where
$$C_1 := \sup_{t \in [0,{\rm diam}(\mathscr{S}')]}\frac{\Phi^1(t)}{t} < + \infty, \ \ \  \  \ \ C_2 := \sup_{t \in [0,{\rm diam}(\mathscr{S}')]}\frac{\Phi^2_M(t)}{t} < + \infty.  $$ 
In particular, since $|\partial {\phi}_0|_{{\mathcal{D}}_0}(z)|z'|_{{\mathcal{D}}_0} \in L^1(a,b)$, it follows $\BBB\mathcal{G}'_{{\phi}_0} \EEE (z)|z'|_{{\mathcal D}_0} \in L^1(a,b)$.  As discussed above, \BBB $\mathcal{G}'_{{\phi}_0}$ \EEE is a strong upper gradient. Thus,  we indeed get that ${\phi}_0 \circ z$ is absolutely continuous, see Definition \ref{main def2}. 
\eop

\section{Relation between 3D and 2D setting}\label{Sec:relation2D3D}

In this section we consider sequences $(y^h)_h$ with $y^h \in \mathscr{S}^M_h =   \lbrace y \in \mathscr{S}_h: \ \phi_h(y) \le  M \rbrace$. \BBB In Section \ref{sec: compactness-hierarchy} we \EEE derive compactness properties and identify suitable limiting objects in terms of scaled in-plane and out-of-plane displacement fields. \BBB Afterwards, in Section \ref{sec: gamma} we derive a $\Gamma$-convergence result  and prove lower semicontinuity for the local slopes along the passage from the 3D to the 2D setting. \EEE As in Section \ref {sec:energy-dissipation}, $C>0$  indicates a generic constant which is independent of $h$, but may depend on $M$, $S$,   $p>3$, \BBB $\alpha \in (0,1)$, \EEE  on the constants in \eqref{assumptions-W},  \eqref{assumptions-P}, \eqref{eq: assumptions-D}, and the boundary data $\hat{u}$ and $\hat{v}$. \EEE

\subsection{Compactness and identification of limiting strain}\label{sec: compactness-hierarchy}

 Given $y^h \in \mathscr{S}_h^M$, we define the \PPP averaged, \EEE scaled in-plane and out-of-plane displacement fields by 
\begin{align}\label{eq: out-plane-strain}
u^h(x') := \frac{1}{h^2} \int_I \Big( \begin{pmatrix}
y_1^h\\ y_2^h \end{pmatrix} (x',x_3) - \begin{pmatrix}
x_1\\ x_2 \end{pmatrix} \Big) \, dx_3, \ \ \ \ v^h(x') := \frac{1}{h} \int_I y_3^h(x',x_3)\, dx_3,
\end{align}
where $I = (-\frac{1}{2},\frac{1}{2})$. By recalling the boundary conditions \eqref{eq: nonlinear boundary conditions} and \eqref {eq: BClinear} we \BBB observe \EEE  $(u^h, v^h) \in {\mathscr{S}}_0$.

In view of  the rigidity estimate in Lemma \ref{lemma:rigidity},  \BBB a deformation  $y^h \in \mathscr{S}_h^M$ \EEE is close to the affine map $(x', x_3) \mapsto (x', h x_3)$   and thus the displacements defined in \eqref{eq: out-plane-strain} are suitably controlled. More specifically, we have the following    compactness result.

\begin{lemma}[Compactness for displacements]\label{lemma: compacti}
Consider a sequence $(y^h)_h$ with $y^h \in \mathscr{S}^M_h$ for all $h$. Then up to passing to a  subsequence (not relabeled) we find 
$(u,v) \in {\mathscr{S}}_0$  such that 
\begin{align}\label{eq: convergence u,v}
& u^h \rightharpoonup u \ \text{ weakly in} \ W^{1,2}(S;\R^2), \notag \\
& v^h \to v \  \text{ strongly in} \ W^{1,2}(S).
\end{align}
\end{lemma}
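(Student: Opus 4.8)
The plan is to extract compactness for the displacement fields $(u^h,v^h)$ directly from the rigidity estimates of Lemma~\ref{lemma:rigidity}. First I would write, for $y^h\in\mathscr{S}_h^M$, the approximating rotation field $R^h:=R(y^h)\in W^{1,2}(S;SO(3))$ furnished by Lemma~\ref{lemma:rigidity}, so that $\Vert\nabla_h y^h - R^h\Vert_{L^2(\Omega)}^2\le Ch^4$, $\Vert\nabla' R^h\Vert_{L^2(S)}^2\le Ch^2$, and $\Vert R^h-\Id\Vert_{L^\infty(S)}\le Ch^\alpha$. The key point is that $R^h$ is uniformly within $Ch^\alpha$ of the identity and its in-plane gradient is of order $h$; hence the antisymmetric part $A^h := \tfrac12\big(h^{-1}(R^h-\Id) - h^{-1}(R^h-\Id)^\top\big)$, or more precisely $h^{-1}(R^h-\Id)$ itself, is bounded in $W^{1,2}(S;\R^{3\times3})$ (its $L^2$-norm is $O(1)$ by combining Lemma~\ref{lemma:rigidity}(ii) or (iv) rescaled, and its gradient is $O(1)$ by (iii)). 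This is exactly the structure exploited in \cite{hierarchy}, and I would follow that argument.

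Next I would recover the out-of-plane displacement. Since $v^h = h^{-1}\int_I y^h_3\,dx_3$, differentiating in-plane gives $\nabla' v^h = h^{-1}\int_I (\nabla' y^h_3)\,dx_3 = h^{-1}\int_I (\nabla_h y^h)_{3*}'\,dx_3$, which by the $L^2$-closeness of $\nabla_h y^h$ to $R^h$ equals $h^{-1}(R^h_{31},R^h_{32}) + O(h)$ in $L^2(S)$. Because $h^{-1}(R^h-\Id)$ is bounded in $W^{1,2}(S;\R^{3\times3})$ and $W^{1,2}(S)\hookrightarrow\hookrightarrow L^2(S)$ compactly (even into $C^0$ is false in 2D, but the compact embedding into $L^2$ and $L^4$ holds), a subsequence of $h^{-1}(R^h-\Id)$ converges weakly in $W^{1,2}$ and strongly in $L^2$ to some limit; its lower-left $2$-block, call it $-\nabla' v$ up to sign conventions, identifies $\nabla' v^h\to\nabla' v$ strongly in $L^2(S)$, and since $v^h=\hat v$ on $\partial S$ with a Poincaré inequality this upgrades to $v^h\to v$ strongly in $W^{1,2}(S)$ with $v\in W^{2,2}(S)$ (the second derivative bound comes from $\nabla'$ of the $W^{1,2}$-bounded rotation field). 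For the in-plane displacement $u^h$, one writes $\nabla' u^h = h^{-2}\int_I\big((\nabla' y^h)' - \Id_{2\times2}\big)dx_3$ and splits $\nabla_h y^h - \Id = (\nabla_h y^h - R^h) + (R^h-\Id)$; using the antisymmetry of the leading order of $R^h-\Id$, the expansion $R^h = \Id + h A + \tfrac{h^2}{2}A^2 + \dots$ produces the quadratic term $-\tfrac12\nabla'v\otimes\nabla'v$, and the symmetrized part of $\nabla'u^h$ is bounded in $L^2(S)$. Korn's inequality (with the clamped boundary condition $u^h=\hat u$ on $\partial S$, as in Lemma~\ref{th: metric space-lin}(ii)) then gives a uniform $W^{1,2}(S;\R^2)$ bound on $u^h$, hence a weakly convergent subsequence $u^h\rightharpoonup u$. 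Finally $(u,v)\in\mathscr{S}_0$ follows from the weak/strong convergence together with continuity of traces and $W^{1,2}(S)\ni v^h=\hat v$, $\nabla' v^h=\nabla'\hat v$ on $\partial S$.

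The main obstacle — though it is more bookkeeping than genuine difficulty here, since the heavy lifting is done in \cite{hierarchy} and quoted via Lemma~\ref{lemma:rigidity} — is the careful identification of the limiting in-plane strain: one must track the quadratic contribution $\tfrac12\nabla'v\otimes\nabla'v$ coming from the second-order Taylor expansion of the rotation field $R^h$ around $\Id$, and verify that the error terms ($\nabla_h y^h - R^h$ of order $h^2$ in $L^2$, and the $h^3$-order terms in the expansion of $R^h$) are genuinely negligible after division by $h^2$. This requires the strong $L^4$ (not just $L^2$) convergence of $h^{-1}(R^h-\Id)$, which is available from the compact Sobolev embedding $W^{1,2}(S)\hookrightarrow L^4(S)$ in dimension two, so that the product $\nabla'v^h\otimes\nabla'v^h$ passes to the limit strongly in $L^2(S)$. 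I would present the argument by reducing to the corresponding statements in \cite[Theorem~6 and its proof]{hierarchy}, indicating precisely which estimates of Lemma~\ref{lemma:rigidity} replace the ingredients there, rather than redoing the expansion in full detail.
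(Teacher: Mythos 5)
Your proposal is correct and follows essentially the same route as the paper, which simply defers to \cite[Lemma 1]{hierarchy} and \cite[Lemma 13]{lecumberry}; your sketch is precisely the argument of those references, rebuilt from the estimates of Lemma~\ref{lemma:rigidity} (rotation field close to $\Id$, $L^4$-control of $h^{-1}(R^h-\Id)$ for the quadratic term, Korn with the clamped boundary data). No gaps.
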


For the proof  we refer to \cite[Lemma 1]{hierarchy} and \cite[Lemma 13]{lecumberry}.

\begin{rem}[Ansatz for recovery sequences]\label{rem: compatible}
{\normalfont

Note that the convergence results in Lemma \ref{lemma: compacti} are compatible with the ansatz 
\begin{align}\label{eq: ansatzi}
y^h(x',x_3) = \begin{pmatrix}
x' \\ h x_3
\end{pmatrix}
+
 \begin{pmatrix}
h^2{u}(x') \\ h {v}(x') 
\end{pmatrix}
-h^2x_3
 \begin{pmatrix}
(\nabla' {v}(x') )^\top \\ 0
\end{pmatrix}
+ h^3x_3 d^h(x'),
\end{align}
for $(u,v) \in {\mathscr{S}}_0$ and $d^h \in W^{2,\infty}_0(S;\R^3)$, which additionally satisfy the regularity
\begin{align}\label{eq: regularity-new} 
\Vert u \Vert_{W^{2,\infty}(S)} + \Vert v \Vert_{W^{3,\infty}(S)} + \BBB \sqrt{h} \EEE \Vert d^h \Vert_{W^{2,\infty}(S)} \le C'.  
\end{align}
We observe that the boundary conditions \eqref{eq: nonlinear boundary conditions} are satisfied,  i.e., $y^h \in \mathscr{S}_h$.   For later purposes, we note that the scaled gradient is given by
\begin{align}\label{eq: recovery sequence-derivative}
\nabla_h y^h & = \Id +  
 \begin{pmatrix} \begin{array}{c|c}
h^2 \nabla' u & -h(\nabla' v)^\top \\ \hline h \nabla' v & 0
\end{array}\end{pmatrix}
-h^2x_3
 \begin{pmatrix} \begin{array}{c|c}
(\nabla')^2 v & 0 \\ \hline 0 & 0 
\end{array}\end{pmatrix} 
  +h^2  d^h \otimes e_3 + h^3x_3 (\nabla' d^h \, | \, 0).
\end{align}
We also note that the (scaled) second gradient   has the form 
$$
(\nabla')^2 y^h  = \begin{pmatrix}
h^2(\nabla')^2 u - h^2x_3 (\nabla')^3 v \\ h (\nabla')^2 v  
\end{pmatrix} + h^3 x_3   (\nabla')^2 d^h,   \ \
\frac{1}{h}\nabla' y^h_{,3} = \begin{pmatrix}
- h (\nabla')^2 v \\ 0 \end{pmatrix} +  h^2\nabla' d^h
$$
and $\frac{1}{h^2}y_{,33}^h = 0$. \BBB By \eqref{eq: regularity-new}   this \EEE implies $\Vert (\nabla')^2 y^h\Vert_{\BBB \infty}\le Ch$,  $\Vert \nabla' (\frac{1}{h} y^h_{,3})\Vert_\infty \le Ch$ and  $y_{,33}^h = 0$ for a constant $C$ depending on $C'$. Consequently, by using $\alpha < 1$ \BBB and \EEE \eqref{assumptions-P}(iii) we compute 
\begin{align}\label{eq: newP}
\limsup_{h \to \infty} \frac{1}{h^{\alpha p}} \int_\Omega P(\nabla_h^2 y^h) &\le \limsup_{h \to \infty}\frac{C}{h^{\alpha p}} \int_\Omega \Big(\Big|(\nabla')^2 y^h\Big|^p + \Big|  \nabla' \Big(\frac{1}{h}  y^h_{,3}\Big)\Big|^p + \Big|   \frac{1}{h^2}  y^h_{,33} \Big|^p\Big)\notag\\
& \le \lim_{h\to 0} Ch^{p(1-\alpha)} = 0. 
\end{align}
\BBB Later \EEE in Theorem \ref{th: Gamma} and Theorem \ref{theorem: lsc-slope} we will use this ansatz to construct recovery sequences for the energy and the dissipations. We remark that, on the one hand, our ansatz is slightly simpler than the one in \cite[(119)]{hierarchy} since we consider a model with  zero Poisson's ratio  in $e_3$ direction,  see Remark \ref{rem: Poisson1} and Remark \ref{rem: Poisson2} for more details. On the other hand, some additional effort arises from the fact that our \BBB 3D model \EEE contains second gradient terms.
}
\end{rem}

In \eqref{eq: G(y)} we have already introduced the quantity $G^h(y^h) = h^{-2}(R(y^h)^\top \nabla_h y^h - \Id)$  which essentially controls the energy of $y^h$. By \eqref{eq:rigidity}(i) we get that $G^h(y^h)$ converges weakly in $L^2$ (up to a subsequence) to some $G$. The next result shows that the symmetric part of the in-plane components of $G$ can be identified in terms of the in-plane displacement $u$ and the  out-of-plane displacement $v$.

\begin{lemma}[Identification of scaled limiting strain]\label{lemma: compacti2}
Consider the \BBB setting \EEE of Lemma \ref{lemma: compacti}.  \BBB Let $R(y^h): S \to SO(3)$ be  the corresponding mappings from Lemma \ref{lemma:rigidity}. \EEE Then
\begin{align}\label{eq: A convergence}
\frac{1}{h} (R(y^h) - \Id)  \to A(v) := e_3 \otimes \nabla' v - \nabla' v \otimes e_3 \ \text{ in } \ L^q( S;  \R^{3 \times 3}), \  1\le q < \infty. 
\end{align}
Moreover, we find $G \in L^2(\Omega;\R^{3 \times 3})$ such that
\begin{align*}
 G^h(y^h) = \frac{R(y^h)^\top \nabla_h y^h - \Id}{h^2} \rightharpoonup G \ \text{ \PPP weakly \EEE in } \ L^2(\Omega;\R^{3 \times 3}). 
 \end{align*}
The $2 \times 2$ submatrix $G''$ given by $G''_{ij} = G_{ij}$ for $i,j \in \lbrace 1, 2 \rbrace$ satisfies
$$G''(x',x_3) = {G}_0(x') + x_3{G}_1(x') $$
with 
\begin{align*}
\BBB {\rm sym}({G}_0) \EEE = e( u) + \frac{1}{2} \nabla' v \otimes \nabla' v   \ \ \text{and} \ \ {G}_1 = - (\nabla')^2 v.
\end{align*}
\end{lemma}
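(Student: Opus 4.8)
The plan is to follow the by-now standard scheme of Friesecke--James--M\"uller and its refinement in \cite{hierarchy, lecumberry}, adapting it to the presence of the (scaled) second gradient via the rigidity estimate in Lemma \ref{lemma:rigidity}. First I would establish \eqref{eq: A convergence}. Since $R(y^h)(x') \in SO(3)$ and $\Vert R(y^h) - \Id \Vert_{L^\infty(S)} \le Ch^\alpha \to 0$ by \eqref{eq:rigidity}(vi), we may write $R(y^h) = \Id + h A^h + {\rm O}(h^2 |A^h|^2)$ pointwise, where $A^h := h^{-1}(R(y^h) - \Id)$; the bound $\Vert \nabla' R(y^h) \Vert_{L^2(S)}^2 \le Ch^2$ from \eqref{eq:rigidity}(iii) together with $\Vert R(y^h) - \Id\Vert_{L^q(S)} \le C_q h$ from \eqref{eq:rigidity}(iv) gives $\Vert A^h \Vert_{W^{1,2}(S)} \le C$, hence $A^h \rightharpoonup A$ weakly in $W^{1,2}(S;\R^{3\times 3})$ and strongly in $L^q(S)$ for every $q<\infty$ after extracting a subsequence. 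Differentiating $R(y^h)^\top R(y^h) = \Id$ and passing to the limit shows $A = -A^\top$, i.e.\ $A \in \R^{3\times 3}_{\rm skew}$. To identify the skew matrix with $A(v) = e_3 \otimes \nabla' v - \nabla' v \otimes e_3$, I would use the third column: from \eqref{eq: recovery sequence-derivative}-type expansions (more precisely, from $\nabla_h y^h \approx R(y^h)$ in $L^2$, see \eqref{eq:rigidity}(i), and the definition of $v^h$ via averaging $y^h_3$ over $I$), the third row of $A^h$ converges to $-\nabla' v$ and the third column to $\nabla' v$; this is exactly the computation carried out in \cite[Lemma 13]{lecumberry} and \cite[Lemma 1]{hierarchy}, which I would invoke. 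Uniqueness of the limit then upgrades the convergence to the whole sequence.

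Next I would identify $G$. By \eqref{eq:rigidity}(i), $\Vert G^h(y^h) \Vert_{L^2(\Omega)} = h^{-2}\Vert R(y^h)^\top \nabla_h y^h - \Id \Vert_{L^2(\Omega)} \le C$, so up to a subsequence $G^h(y^h) \rightharpoonup G$ weakly in $L^2(\Omega;\R^{3\times3})$. To analyze the in-plane block $G''$, I would use the algebraic identity $R(y^h)^\top \nabla_h y^h - \Id = h^2 G^h(y^h)$, rewrite it as $\nabla_h y^h = R(y^h)(\Id + h^2 G^h(y^h))$, and isolate the in-plane gradient $\nabla' y^h$. The key point is that the $x_3$-dependence of $G''$ is affine: this follows because $\nabla' y^h$ is affine in $x_3$ in the recovery ansatz and, in general, because the quantity $h^{-2}\int_I (\nabla' y^h - R(y^h))$ and $h^{-2}\int_I x_3(\nabla' y^h - R(y^h))$ (suitably interpreted) converge to the zeroth and first moments $G_0$ and $G_1$; one shows the higher moments vanish using the bound on $\nabla^2_h y^h$ and the structure of $\mathscr{S}_h$, as in \cite[proof of Lemma 2]{hierarchy}. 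Concretely, expanding
\begin{align*}
h^2 G^h(y^h) = R(y^h)^\top \nabla_h y^h - \Id = R(y^h)^\top(\nabla_h y^h - R(y^h))
\end{align*}
and using $R(y^h) = \Id + hA(v) + o(h)$ in $L^q$ together with $\nabla_h y^h = \Id + \big(\begin{smallmatrix} h^2\nabla' u & -h(\nabla'v)^\top \\ h\nabla' v & 0\end{smallmatrix}\big) - h^2 x_3 \big(\begin{smallmatrix}(\nabla')^2 v & 0 \\ 0 & 0\end{smallmatrix}\big) + \text{(higher order)}$ (valid for the recovery ansatz; in general one argues with the displacement fields $u^h, v^h$ and Lemma \ref{lemma: compacti}), the $h^1$-terms cancel against $hA(v)$ and the $h^2$-coefficient of the in-plane block is precisely ${\rm sym}(\nabla' u) + \tfrac12 \nabla' v \otimes \nabla' v - x_3 (\nabla')^2 v$ after symmetrization and after absorbing the quadratic contribution $\tfrac12 A(v)^\top A(v)$ restricted to the in-plane block, which equals $\tfrac12 \nabla' v \otimes \nabla' v$. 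Passing to the weak limit gives ${\rm sym}(G_0) = e(u) + \tfrac12 \nabla' v \otimes \nabla' v$ and $G_1 = -(\nabla')^2 v$.

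The main obstacle, and the step requiring the most care, is the rigorous extraction of the affine-in-$x_3$ structure and the correct quadratic term $\tfrac12\nabla' v\otimes\nabla' v$ from the nonlinear expression $R(y^h)^\top\nabla_h y^h$ \emph{without} assuming the recovery ansatz --- i.e.\ for a general sequence in $\mathscr{S}_h^M$. This is handled by the same argument as in \cite[Section 3]{hierarchy}: one writes $\nabla' y^h(x',x_3) = \nabla' y^h(x',0) + \int_0^{x_3} \partial_3 \nabla' y^h(x',t)\,dt$, controls the remainder using $\Vert \nabla^2_h y^h\Vert_{L^p(\Omega)} \le Ch^\alpha$ (so the $x_3$-dependence beyond the linear term is of order $h^{2+\alpha}$, negligible after dividing by $h^2$ in the relevant weak-$L^2$ sense), and uses the strong $L^q$-convergence of $h^{-1}(R(y^h)-\Id)$ together with the weak convergence $h^{-2}(R(y^h)^\top\nabla_h y^h - \Id) \rightharpoonup G$ to pass to the limit in the product. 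The cross term producing $\tfrac12\nabla'v\otimes\nabla'v$ comes from $\tfrac12 (R(y^h)-\Id)^\top(R(y^h)-\Id)$, which converges strongly in $L^1$ (in fact in $L^{q/2}$) to $\tfrac12 A(v)^\top A(v)$, whose in-plane block is $\tfrac12\nabla'v\otimes\nabla'v$; this product of two weakly-but-strongly-$L^q$ convergent factors is the only place where one genuinely needs the compactness (strong convergence) in \eqref{eq: A convergence} rather than mere boundedness. Since all the pieces are available from Lemma \ref{lemma:rigidity}, Lemma \ref{lemma: compacti}, and the cited results of \cite{hierarchy, lecumberry}, I would present this as a direct adaptation, emphasizing only the (minor) modifications forced by the second-gradient term, namely that the error estimates carry the extra factor $h^\alpha$ which is harmless because $\alpha>0$.
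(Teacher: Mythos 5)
Your argument is correct and coincides with the paper's approach: the paper simply refers to \cite[Lemma 1, Lemma 2]{hierarchy} and \cite[Lemma 16]{lecumberry}, together with the observation that the rotations constructed there may be taken to be the $R(y^h)$ from Lemma \ref{lemma:rigidity}, and you reconstruct exactly that argument (boundedness of $h^{-1}(R(y^h)-\Id)$ in $W^{1,2}$, skew-symmetric limit $A(v)$, the identity ${\rm sym}(R-\Id)=-\tfrac12 (R-\Id)^\top(R-\Id)$ producing the quadratic term $\tfrac12\nabla'v\otimes\nabla'v$, and the moment/affine-in-$x_3$ analysis of \cite[Lemma 2]{hierarchy}). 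Only a cosmetic slip: for $A(v)=e_3\otimes\nabla'v-\nabla'v\otimes e_3$ the third \emph{row} is $+\nabla'v$ and the third \emph{column} is $-\nabla'v$, not the other way around.
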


Here, we again use the notation $e(u) = {\rm sym}(\nabla' u)$, \BBB see \eqref{eq: important notation}. \EEE For the proof we refer to \cite[Lemma 1, Lemma 2]{hierarchy} \BBB (see also \cite[Lemma 16]{lecumberry}) \EEE from which we also adopted the notation for ease of readability.   \BBB Note that, in fact, \EEE an inspection of the proof shows that the mappings $R(y^h)$ introduced there can be chosen as the   ones from  Lemma \ref{lemma:rigidity}.    \EEE  In particular, the result shows that the relevant components of $G$ are affine in the thickness variable $x_3$.   In the following, we will also use the notation  
\begin{align}\label{eq: Gnot}
G(u,v)(x',x_3) =  \BBB {\rm sym}({G}_0) \EEE (x') + x_3{G}_1(x') =e(u) + \frac{1}{2} \nabla' v \otimes \nabla' v  -  x_3(\nabla')^2 v. 
\end{align}
Note that $G(u,v) \in L^2(\Omega;\R^{2 \times 2}_{\rm sym})$.

 \begin{rem}
 {\normalfont
Using \eqref{eq: Gnot} and the fact that $Q_W^2$ only depends on the symmetric part of matrices (see Lemma \ref{D-lin}(ii)), the energy  ${\phi}_0(u,v)$ defined in \eqref{eq: phi0}  (with $f \equiv 0$)  can be written as
\begin{align}\label{eq: G and phi}
{\phi}_0(u,v) & = \int_S \Big(\frac{1}{2}Q^2_W(G_0) + \frac{1}{24}Q_W^2(G_1)  \Big)\notag \\
& =  \int_S \int_{-1/2}^{1/2 }\frac{1}{2}Q^2_W(G_0(x') + x_3 G_1(x'))\, dx_3 \, dx'   = \int_\Omega \frac{1}{2}Q^2_W(G(u,v)).
\end{align} 
In view of the definition of ${\mathcal{D}}_0$ in \eqref{eq: D,D0-2},  a similar computation yields 
\begin{align}\label{eq: G and D}
\int_\Omega Q^2_D\big(G(u_1,v_1) - G(u_2,v_2)\big)  = {{\mathcal D}}_0\big( (u_1,v_1), (u_2,v_2)\big)^2. 
\end{align}
In the following we will mainly use the representation in terms of an integral over $\Omega$ as it is convenient for many proofs. Only at the very end we will pass to an integral over $S$ as indicated in \eqref{eq: G and phi}-\eqref{eq: G and D}.
}
\end{rem}

The following lemma about strong convergence of \BBB strain differences will be instrumental \EEE in the proof of the lower semicontinuity of the local slopes.  \BBB We introduce the notation $W^{2,p}_{0,\partial S}(\Omega;\R^3) = \lbrace y \in W^{2,p}(\Omega;\R^3): y(x',x_3)  = 0  \text{ for } x' \in \partial S, \ x_3 \in I \rbrace$, where $I=(-\frac{1}{2},\frac{1}{2})$.  We   also define ${\rm skew}(F) = \frac{1}{2}(F - F^\top)$ for $F \in \R^{3 \times 3}$.    \EEE 

%

\begin{lemma}[Strong convergence of \BBB strain differences\EEE]\label{lemma: strong convergence}
Let $M>0$. Let $(y^h)_h$ be a sequence with $y^h \in \mathscr{S}^M_h$ and let  $(z^h_s)_{s,h} \subset W^{2,p}_{0, \BBB \partial S}(\Omega;\R^3)$, $h>0$, $s \in (0,1)$,  be functions such that 
\begin{align}\label{eq: strong convergence assumptions}
(i) & \ \  \Vert \nabla_h z^h_s \Vert_{L^{\infty}(\Omega)} + \Vert \nabla^2_h z^h_s \Vert_{L^{\infty}(\Omega)} \le   M   s h. \notag \\
(ii) & \ \ \Vert {\rm sym}(\nabla_h z^h_s) \Vert_{L^{2}(\Omega)} \le   M   s h^2,\notag\\
(iii) & \ \ \BBB \big|  {\rm skew} (\nabla_h z^h_s)(x',x_3)  -  \int_{I} {\rm skew}(\nabla_h z^h_s)(x',x_3) \, dx_3 \big|   \le Msh^{5/2} \EEE \ \ \ \text{for a.e.\ $x \in \Omega$},\notag\\
(iv) & \ \ \PPP \text{there exist $E^s, F^s \in L^2(\Omega;\R^{3 \times 3}) $ for $s\in(0,1)$, and $\eta(h) \to 0$ as $h \to 0$  such that  } \notag \\
& \ \  \Vert h^{-2}{\rm sym}(\nabla_h z^h_s) - E^s \Vert_{L^2(\Omega)}  +  \Vert h^{-1}{\rm skew}(\nabla_h z^h_s) - F^s \Vert_{L^2(\Omega)}  \le s\eta(h). \EEE
\end{align}
Then the following holds for a subsequence of $(y^h)_h$ (not relabeled):

(a) For all $h$ sufficiently small,  $w^h_s := y^h + z^h_s$ lies in $\mathscr{S}^{M'}_h$ for some $M' = M'(M)>0$.  

(b) Let $(G^h(y^h))_h$, $(G^h(w^h_s))_h$ be the sequences in Lemma \ref{lemma: compacti2}   and let $G_y$, $G^s_w$ be their limits.   \BBB Then there exists $C=C(M)>0$ and $\rho(h)$ with $\rho(h)\to 0$ as $h \to 0$ such that   
\begin{align*}
(i) & \ \ \big\|  \big(G^h(y^h) - G^h(w^h_s)\big) -   \big(G_y - G^s_w  \big)\big\| _{L^2(\Omega)} \le s\rho(h),\notag \\
(ii) & \ \  \Vert G^h(y^h) - G^h(w^h_s)\Vert_{L^2(\Omega)} \le Cs.
\end{align*}

(c) \EEE Let $(u,v)$ and $(\bar{u}_s,\bar{v}_s)$ be limits corresponding to $y^h$ and $w^h_s$, respectively, as given in Lemma \ref{lemma: compacti}.  Then ${\rm sym}(G^s_w-G_y ) \, e_3 = E^s\, e_3 - \frac{1}{2}(|\nabla' {v}|^2 - |\nabla' \bar{v}_s|^2) e_3$ \BBB a.e.\ in $\Omega$. \EEE 
\end{lemma}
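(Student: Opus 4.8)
The plan is to treat the three parts of Lemma~\ref{lemma: strong convergence} in order, building on the rigidity estimates of Lemma~\ref{lemma:rigidity} and the structural results of Lemma~\ref{lemma: compacti2}. For part~(a), the key observation is that $w^h_s = y^h + z^h_s$ still satisfies the boundary conditions in \eqref{eq: nonlinear boundary conditions} because $z^h_s \in W^{2,p}_{0,\partial S}(\Omega;\R^3)$, so $w^h_s \in \mathscr{S}_h$. To bound $\phi_h(w^h_s)$ uniformly, I would split the elastic term using $\nabla_h w^h_s = \nabla_h y^h + \nabla_h z^h_s$ together with the $L^\infty$-bound \eqref{eq: strong convergence assumptions}(i) (which gives $\Vert \nabla_h z^h_s \Vert_\infty \le Msh \le M$) and the fact that $\dist^2(\nabla_h w^h_s, SO(3)) \le C\,\dist^2(\nabla_h y^h, SO(3)) + C|\nabla_h z^h_s|^2$ near $SO(3)$; the $L^2$-norm of $\nabla_h z^h_s$ is controlled by $\Vert \nabla_h z^h_s \Vert_\infty^{1/2}$ times $\Vert {\rm sym}(\nabla_h z^h_s)\Vert_{L^2}^{1/2}$-type estimates, or more directly one can use the frame-indifference to reduce to the symmetric part and invoke \eqref{eq: strong convergence assumptions}(ii) giving a contribution of order $h^4$. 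The second-gradient term is controlled by \eqref{eq: strong convergence assumptions}(i) and the assumption $\alpha<1$ exactly as in \eqref{eq: newP}. Hence $\phi_h(w^h_s) \le M'$ for some $M' = M'(M)$.

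For part~(b), the strategy is to compare the rigidity rotations $R(y^h)$ and $R(w^h_s)$ and the corresponding strains. Since $\nabla_h w^h_s - \nabla_h y^h = \nabla_h z^h_s$ has sup-norm $O(sh)$, and since $R(\cdot)$ is constructed via mollification and nearest-point projection (see the proof of Lemma~\ref{lemma:rigidity}(vi)), one deduces $\Vert R(w^h_s) - R(y^h)\Vert_\infty \le Csh$, and more precisely, using that the nearest-point projection onto $SO(3)$ is differentiable near $SO(3)$ with derivative the projection onto the skew part, that $R(w^h_s)^\top R(y^h) = \Id - {\rm skew}(h^{-2}\text{-scale correction}) + \text{h.o.t.}$ Writing $G^h(w^h_s) - G^h(y^h) = h^{-2}(R(w^h_s)^\top \nabla_h w^h_s - R(y^h)^\top \nabla_h y^h)$ and expanding, the leading term is $h^{-2}R(y^h)^\top \nabla_h z^h_s$ plus a term involving $h^{-2}(R(w^h_s)^\top - R(y^h)^\top)\nabla_h y^h$. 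The symmetric part of $h^{-2}\nabla_h z^h_s$ converges strongly to $E^s$ by \eqref{eq: strong convergence assumptions}(iv); the skew part, after subtracting its $x_3$-average (which by \eqref{eq: strong convergence assumptions}(iii) costs only $O(sh^{5/2})$, negligible at scale $h^2$), is controlled at scale $h^{-1}$ by $F^s$ via \eqref{eq: strong convergence assumptions}(iv), and the interplay with $R(y^h) \to \Id$ and $R(w^h_s)-R(y^h)$ at scale $h$ produces precisely a strongly convergent object; this gives (b)(i) with a rate $\rho(h) = \eta(h) + Ch^{1/2}$ or similar. Part~(b)(ii) then follows from (b)(i) and the uniform bound $\Vert G_y - G^s_w\Vert_{L^2} \le Cs$, which itself comes from the convergences and the linear-in-$s$ scaling of the data in \eqref{eq: strong convergence assumptions}.

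For part~(c), the idea is to extract the $e_3$-column identity from the limit object in (b). By Lemma~\ref{lemma: compacti2}, $G^h(y^h) \rightharpoonup G_y$ and $G^h(w^h_s)\rightharpoonup G^s_w$, and the $2\times 2$ in-plane submatrices have the affine-in-$x_3$ structure with symmetric parts $G(u,v)$ and $G(\bar u_s,\bar v_s)$ from \eqref{eq: Gnot}. The $e_3$-column of ${\rm sym}(G^s_w - G_y)$ is not pinned down by the in-plane identification, so one has to compute it directly: from the ansatz-type expansion $\nabla_h w^h_s - \nabla_h y^h = \nabla_h z^h_s$, the $e_3$-column of $R(y^h)^\top \nabla_h z^h_s$ at scale $h^{-2}$, symmetrized, contributes $E^s e_3$ in the limit, while the correction coming from $R(w^h_s) - R(y^h)$ acting on the $e_3$-column of $\nabla_h y^h$ (which by Lemma~\ref{lemma: compacti2}, \eqref{eq: A convergence}, behaves like $\Id + hA(v) + \dots$) produces, after symmetrization and taking limits, the term $-\tfrac12(|\nabla' v|^2 - |\nabla' \bar v_s|^2)e_3$; the sign and structure follow from $R(y^h) = \Id + hA(v) + O(h^2)$ combined with $R(w^h_s) = \Id + hA(\bar v_s) + O(h^2)$ and the quadratic expansion of the $SO(3)$-constraint (exactly the mechanism that produces the $\tfrac12|\nabla' v|^2$ term in the in-plane strain). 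I expect the main obstacle to be part~(c), specifically keeping careful track of the $e_3$-column in the expansion of $R(w^h_s)^\top \nabla_h w^h_s - R(y^h)^\top \nabla_h y^h$: one must correctly identify which terms survive at scale $h^{-2}$, use the orthogonality constraint $R^\top R = \Id$ to convert a symmetric-part computation of $R$-differences into the quadratic gradient term, and verify that the contributions involving the unknown $d^h$-type out-of-plane corrections and the skew parts assemble precisely into $E^s e_3 - \tfrac12(|\nabla' v|^2 - |\nabla' \bar v_s|^2)e_3$ and nothing more.
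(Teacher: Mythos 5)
Your overall strategy coincides with the paper's: attach to $w^h_s$ a rotation field tied to $R(y^h)$, expand $R(w^h_s)^\top\nabla_h w^h_s - R(y^h)^\top\nabla_h y^h$ to second order in the skew part of $\nabla_h z^h_s$, and read off the $e_3$-column in the limit via $A(v)$ and $A(\bar v_s)$. However, your primary route for (a) fails quantitatively: the bound $\dist^2(\nabla_h w^h_s,SO(3)) \le C\dist^2(\nabla_h y^h,SO(3)) + C|\nabla_h z^h_s|^2$ combined with assumption (i) only gives a contribution $\|\nabla_h z^h_s\|_{L^2(\Omega)}^2 = O(s^2h^2)$, which is far larger than the required $O(h^4)$. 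Your fallback (``reduce to the symmetric part'') is the right idea, but it is not frame-indifference of $W$ alone that rescues you: the skew part of $\nabla_h z^h_s$ is also only $O(sh)$, so you must first update the rotation, i.e.\ rotate by something like $R(y^h)(\Id + \bar F)$ with $\bar F = \int_I{\rm skew}(\nabla_h z^h_s)\,dx_3$, so that the skew part is absorbed and reappears only quadratically and in products with $R(y^h)-\Id$, both of order $h^2$ in $L^2$. This is exactly what the paper does in its Step 1, and it does so by \emph{defining} $\tilde R = R(y^h)(\Id + \bar F - \tfrac12\bar F^\top\bar F)$ and projecting onto $SO(3)$ at cost $C\|\bar F\|_{L^\infty}^3 \le Csh^3$, then verifying the rigidity properties of Lemma \ref{lemma:rigidity} for this $R(w^h_s)$ directly — rather than, as you propose, taking the generic mollification-based rotation for $w^h_s$ and differentiating the nearest-point projection. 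Your route can be made to work but requires controlling the projection to second order with an $L^2$ error of size $o(sh^2)$, which is considerably more delicate than the explicit construction.

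The heart of (b) is an exact algebraic cancellation that your sketch leaves implicit: in the expansion of $R(w^h_s)^\top\nabla_h w^h_s$ the term $F(z^h_s):={\rm skew}(\nabla_h z^h_s)$ coming from $\nabla_h z^h_s = E(z^h_s)+F(z^h_s)$ cancels against the term $F(z^h_s)^\top = -F(z^h_s)$ coming from the rotation update (assumption (iii) is precisely what permits replacing $\bar F$ by $F(z^h_s)$ at cost $O(sh^{5/2}) = s\,{\rm o}(h^2)$, as you correctly note). What survives at order $h^2$ is $E(z^h_s) + (R(y^h)-\Id)^\top F(z^h_s) + \tfrac12 F(z^h_s)^\top F(z^h_s)$, which after division by $h^2$ converges strongly, by assumption (iv) and \eqref{eq: A convergence}, to $E^s + A(v)^\top F^s + \tfrac12(F^s)^\top F^s$ with linear-in-$s$ rate; (b)(ii) follows from the same expansion together with \eqref{eq:rigidity}(i),(iv). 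Part (c) then follows from $A(\bar v_s) = A(v)+F^s$ and $(A(v))^2e_3 = -|\nabla'v|^2e_3$, exactly as you anticipated. Make the cancellation of the skew parts explicit and replace the first route in (a), and your argument matches the paper's.
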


 \BBB
We note that weak convergence of the strain differences is already guaranteed by Lemma \ref{lemma: compacti2}. The important point here is that we actually obtain strong convergence \PPP with linear control in terms of $s$, \EEE see (b). Moreover, (c) provides a characterization of the out-of-plane components of the limiting strain \PPP difference.  Later in the proof of Theorem \ref{theorem: lsc-slope} we will use this lemma to construct competitor sequences for the local slope in the 3D setting. \EEE

\begin{proof}
 \BBB Let $R(y^h)$ be the $SO(3)$-valued mappings given by   Lemma \ref{lemma:rigidity}. For brevity, we introduce notations for the symmetric and skew-symmetric part of $\nabla_h z^h_s$ by
 $$E(z^h_s) = {\rm sym}(\nabla_h z^h_s), \ \ \ \ \  F(z^h_s) =  {\rm skew}(\nabla_h z^h_s), \ \ \ \ \  {\thickbar F}(z^h_s) = \int_I F(z^h_s) \, dx_3.$$
 The crucial point is to find a suitable $SO(3)$-valued mapping $R(w^h_s)$ associated to  $w^h_s = y^h + z^h_s$ satisfying the properties stated in Lemma \ref{lemma:rigidity} (Step 1). Once $R(w^h_s)$ has been defined, we can prove properties (a)-(c) (Step 2).

\emph{Step 1: Definition of $R(w^h_s)$.} We first define 
$$\tilde{R} =   R(y^h) \big(\Id + {\thickbar F}(z^h_s) - \tfrac{1}{2} {\thickbar F}(z^h_s)^\top {\thickbar F}(z^h_s)\big) $$ 
on $S$. By \eqref{eq:rigidity}(iii) and  \eqref{eq: strong convergence assumptions}(i) we can check that $\tilde{R}$ is in a small tubular neighborhood of $SO(3)$  and satisfies $\Vert \nabla' \tilde{R} \Vert^2_{L^2(S)} \le Ch^2$. We let $R(w^h_s) \in W^{1,2}(S;SO(3))$ be the map obtained from $\tilde{R}$ by nearest-point projection on $SO(3)$, see \cite[Remark 5]{hierarchy} for a similar argument. By \eqref{eq: strong convergence assumptions}(i) and  ${\thickbar F}(z^h_s)(x') \in \R^{3\times 3}_{\rm skew}$ for all $x' \in S$, it is elementary to check that  
$$
\big\| R(w^h_s) -  R(y^h) \big(\Id + {\thickbar F}(z^h_s) - \tfrac{1}{2} {\thickbar F}(z^h_s)^\top {\thickbar F}(z^h_s)\big)   \big\|_{L^\infty(S)}  \le C\Vert {\thickbar F}(z^h_s)  \Vert^3_{L^\infty(S)} \le  Csh^3. 
$$
Indeed, this follows from the fact that $|((\Id + A -\frac{1}{2}A^\top A)^\top (\Id + A -\frac{1}{2}A^\top A))^{\PPP 1/2 \EEE } - \Id| \le C|A|^3$ for all $A \in \R^{3 \times 3}_{\rm skew}$. Along with \eqref{eq: strong convergence assumptions}(iii) we thus get
\begin{align}\label{eq: w-rotat}
\big\| R(w^h_s) -   R(y^h) \big(\Id + F(z^h_s) - \tfrac{1}{2} F(z^h_s)^\top F(z^h_s)\big)   \big\|_{L^\infty(\Omega)}  \le Csh^{5/2}. 
\end{align}  

We now check that $R(w^h_s)$ satisfies the properties stated in Lemma \ref{lemma:rigidity}, see \eqref{eq:rigidity}. First, $\Vert \nabla' \tilde{R} \Vert^2_{L^2(S)} \le Ch^2$ implies $\Vert \nabla' R(w^h_s)  \Vert^2_{L^2(S)} \le Ch^2$. Moreover, \eqref{eq: strong convergence assumptions}(i), \eqref{eq: w-rotat},    and the fact that $R(y^h)$ satisfies \eqref{eq:rigidity}(iv) shows   $\Vert R(w^h_s) -\Id  \Vert_{L^q(S)}\le C_qh$ \PPP for $q \in [1,\infty)$. \BBB In a similar fashion, \eqref{eq:rigidity}(vi) \PPP and $\alpha <1$ yield \BBB  $\Vert R(w^h_s) -\Id  \Vert_{L^\infty(S)}\le Ch^\alpha$. It thus remains to check \PPP \eqref{eq:rigidity}(i), i.e., that \BBB  
\begin{align}\label{eq: last thing to check}
\Vert R(w^h_s)^\top \nabla_h w^h_s - \Id \Vert^2_{L^2(\Omega)}\le Ch^4
\end{align}
holds. For notational convenience, we denote by  $\omega^h_i \in L^2(\Omega;\R^{3 \times 3})$, $i \in \N$,  (generic) matrix valued functions whose $L^2$-norm is controlled in terms of a constant independent of $h$ and $s$.  By  \eqref{eq:rigidity}(v) \PPP (applied for $y^h$), \BBB \eqref{eq: strong convergence assumptions}(i), and \eqref{eq: w-rotat} we find 
\begin{align}\label{eq: new-strain}
R(w^h_s)^\top \nabla_h w^h_s  = \big(  \Id + F(z^h_s) - \tfrac{1}{2} F(z^h_s)^\top F(z^h_s) \big)^\top R(y^h)^\top (\nabla_h y^h + \nabla_h z^h_s) + sh^{5/2} \omega_1^h. 
\end{align}
We now consider the asymptotic expansion of $R(y^h)^\top (\nabla_h y^h + \nabla_h z^h_s)$  in terms of $h$: in view of \eqref{eq: strong convergence assumptions}(ii), $\nabla_h z^h_s  = E(z^h_s) +  F(z^h_s)$,    and $\Vert R(y^h) - \Id \Vert_{L^\infty(\Omega)} \le Ch^\alpha$ (see \eqref{eq:rigidity}(vi)) we find  
$$R(y^h)^\top (\nabla_h y^h + \nabla_h z^h_s) = R(y^h)^\top (\nabla_h y^h +  F(z^h_s)) + E(z^h_s) +  h^{2+\alpha}s \omega^h_2.$$
In a similar fashion, by  \PPP using \eqref{eq:rigidity}(i),(iv) (for $y^h$)  and \eqref{eq: strong convergence assumptions}(i),(ii), \BBB  we compute 
$$R(y^h)^\top (\nabla_h y^h + \nabla_h z^h_s) = \Id + R(y^h)^\top  F(z^h_s) + h^2 \omega^h_3 =  \Id +  F(z^h_s) + h^{2} \omega^h_4$$
as well as 
$$ R(y^h)^\top (\nabla_h y^h + \nabla_h z^h_s)   =  \Id  +   h \omega^h_5.$$ 
By inserting these three estimates in \eqref{eq: new-strain} and using \eqref{eq: strong convergence assumptions}(i) we find
\begin{align}\label{eq: new-strain2}
R(w^h_s)^\top \nabla_h w^h_s &  = R(y^h)^\top (\nabla_h y^h + F(z^h_s)) + E(z^h_s) +  F(z^h_s)^\top     + F(z^h_s)^\top F(z^h_s)\\
&   \ \ \ - \tfrac{1}{2} F(z^h_s)^\top F(z^h_s) + s(h^{5/2} + h^{2+\alpha}) \,\omega^h_6\notag \\
& = R(y^h)^\top \nabla_h y^h + (R(y^h) - \Id)^\top F(z^h_s)  + E(z^h_s) + \tfrac{1}{2} F(z^h_s)^\top F(z^h_s) + s\notag \omega^h_6 {\rm o}(h^{2}), 
\end{align}
where in the last step we used $ F(z^h_s)^\top +  F(z^h_s) = 0$. We now check that \eqref{eq: last thing to check} holds. Indeed, it suffices to use \eqref{eq: new-strain2}, \eqref{eq: strong convergence assumptions}(i),(ii),  $\omega_6^h \in L^2(\Omega;\R^{3\times 3})$,    and the fact that \eqref{eq:rigidity}(i),(iv) holds for $y^h$. In conclusion, this implies that the mapping $R(w^h_s)$ satisfies the properties stated in Lemma \ref{lemma:rigidity}.

\emph{Step 2: Proof of the statement.} We are now in a position to prove the statement. \EEE

(a) \PPP By   \eqref{eq:rigidity}(v)  and \eqref{eq: strong convergence assumptions}(i), $\nabla_h w^h_s$ is in a neighborhood of $\Id$. Thus, by \EEE \eqref{assumptions-W}(iii) there holds $\int_\Omega W(\nabla_h w^h_s) \le C\int_\Omega \dist^2(\nabla_h w^h_s,SO(3))$, and then by \eqref{eq: last thing to check} we get    $\int_\Omega W(\nabla_h w^h_s) \le  M' h^4$ for some $M'$ sufficiently large   (depending on $M$). Moreover, from \eqref{assumptions-P}(iii) and the triangle inequality we get $P(\nabla^2_h w^h_s) \le CP(\nabla^2_h y^h) + CP(\nabla^2_h z^h_s)$. Therefore, by possibly passing to a larger $M'$, we obtain  $h^{-\alpha p }\int_\Omega P(\nabla^2_h w^h_s) \le M'$ by \PPP \eqref{assumptions-P}(iii), \eqref{eq: strong convergence assumptions}(i), $\alpha <1$, \EEE and the fact that  $y^h \in \mathscr{S}_h^M$. Summarizing, since $z^h_s  \in W^{2,p}_{0,  \partial S}(\Omega;\R^3)$ and thus $w^h_s$ also satisfies the boundary conditions \eqref{eq: nonlinear boundary conditions}, we have shown that $w^h_s \in \mathscr{S}^{M'}_h$ for some $M'=M'(M)>0$    independent of $h$. In particular, this implies that the statement of Lemma \ref{lemma:rigidity} holds for $w^h_s$ with $R(w^h_s)$ as defined in Step 1.

(b) \BBB   By  $(u,v)$ we denote the limit corresponding to $y^h$ as given in Lemma \ref{lemma: compacti}.  Recalling the definition $G^h(y^h) = h^{-2}(R(y^h)^\top \nabla_h y^h -\Id)$ we find  by \eqref{eq: A convergence}, \PPP \eqref{eq: strong convergence assumptions}(iv), \EEE and \eqref{eq: new-strain2} that 
\begin{align}\label{eq: c2}
\big\|\big(G^h(w^h_s) - G^h(y^h)\big) - \big( A(v)^\top F^s + E^s +\tfrac{1}{2} (F^s)^\top F^s \big)\big\|_{L^2(\Omega)} \le   s{\rho}(h),
\end{align}
where ${\rho}(h) \to 0$ as $\rho \to 0$. By $G^s_w$ and $G_y$ we denote the weak $L^2$-limits of $G^h(w^h_s)$ and $G^h(y^h)$, respectively, which exist by  Lemma \ref{lemma: compacti2}. Then  \eqref{eq: c2}  implies 
\begin{align}\label{eq: c3}
G^s_w - G_y = A(v)^\top F^s + E^s +\tfrac{1}{2} (F^s)^\top F^s,
\end{align}
and the first part of (b) holds.  The second part of (b) is a consequence of \eqref{eq:rigidity}(iv) \PPP (for $y^h$), \EEE \eqref{eq: strong convergence assumptions}(i),(ii),  \eqref{eq: new-strain2}, and the fact that $\omega^h_6 \in L^2(\Omega;\R^{3 \times 3})$.

(c)  By  $(\bar{u}_s,\bar{v}_s)$ we denote the limit corresponding to $w^h_s$ as given in Lemma \ref{lemma: compacti}. By \eqref{eq: A convergence} (for $w^h_s$ and $y^h$, respectively), \eqref{eq: strong convergence assumptions}(i),(iv), and  \eqref{eq: w-rotat}  we observe that pointwise a.e.\ in $\Omega$ there holds
$$A(\bar{v}_s) = \lim_{h \to 0} \frac{1}{h}(R(w^h_s) - \Id) =  \lim_{h \to 0} \Big(  \frac{1}{h}(R(y^h) - \Id)(\Id + F(z^h_s)) + \frac{1}{h} F(z^h_s) \Big) = A(v) + F^s.   $$
  Then by  \eqref{eq: c3} and an expansion we get 
  \begin{align*}
{\rm sym}(G^s_w - G_y) & = E^s + {\rm sym} (A(v)^\top F^s)  +\tfrac{1}{2} (F^s)^\top F^s  \\
&= E^s + \tfrac{1}{2} (A(\bar{v}_s))^\top A(\bar{v}_s) -  \tfrac{1}{2} (A({v}))^\top A({v}) = E^s - \tfrac{1}{2} (A(\bar{v}_s))^2 +  \tfrac{1}{2} (A({v}))^2,
\end{align*}
where in the last step we used that $A(v) \in \R^{3 \times 3}_{\rm skew}$ pointwise a.e.\ in $\Omega$ and thus $A(v)^\top A(v) = -(A(v))^2$. \EEE
Therefore, recalling the definition of $A(v)$ in \eqref{eq: A convergence} we obtain 
$${\rm sym}( G^s_w - G_y) \, e_3 = E^s\, e_3  +  \tfrac{1}{2}(A(v))^2\, e_3 -  \tfrac{1}{2}(A(\bar{v}_s))^2 \, e_3  = E^s\, e_3 -  \tfrac{1}{2} (|\nabla' {v}|^2 - |\nabla' \bar{v}_s|^2) e_3.$$
This concludes the proof. 
\end{proof}

\subsection{$\Gamma$-convergence and lower semicontinuity of slopes}\label{sec: gamma}

In this section we establish a $\Gamma$-convergence result for the energies which \PPP is essentially proved in \EEE    \cite{hierarchy, lecumberry}. However, some adaptions are necessary due to the second order perturbation $P$ in the energy. For an exhaustive treatment of $\Gamma$-convergence we refer the reader to \cite{DalMaso:93}. Afterwards, we prove lower semicontinuity for the dissipation distances and the local slopes which is fundamental to use the theory in \cite{Ortner,S2}  (see also Section \ref{sec: auxi-proofs}, in particular \eqref{compatibility} and \eqref{eq: implication}).    

We first fix a topology for the convergence of the scaled in-plane and out-of-plane displacements induced by the compactness result in Section \ref{sec: compactness-hierarchy}, see \eqref{eq: convergence u,v}. We define mappings $\pi_h: \BBB \mathscr{S}_h \EEE \to {\mathscr{S}}_0$ by $\pi_h(y^h) = (u^h,v^h)$ for each $y^h \in   \mathscr{S}_h$, where $u^h$ and $v^h$ are the  scaled in-plane and out-of-plane displacements corresponding to $y^h$ (see \eqref{eq: out-plane-strain}).   We say 
\begin{align}\label{eq: convergence u,v-2}
\pi_h(y^h) = (u^h,v^h)  \stackrel{\sigma}{\to} (u,v) \ \ \ \text{if $u^h \rightharpoonup u$ in $W^{1,2}(S;\R^2)$  \ and \  $v^h \to v$   in $W^{1,2}(S)$.}
\end{align}
We also say $y^h \stackrel{\pi\sigma}{\to} (u,v)$ if $\pi_h(y^h)  \stackrel{\sigma}{\to} (u,v)$, cf.\ \eqref{eq: sigma'}. \BBB Recall the definitions \eqref{nonlinear energy-rescale} and \eqref{eq: phi0}. \EEE

\begin{theorem}[$\Gamma$-convergence]\label{th: Gamma}
Suppose that $W$ and $P$ satisfy the assumptions \eqref{assumptions-W} and \eqref{assumptions-P}. Then $\phi_h$ converges to ${\phi}_0$ in the sense of $\Gamma$-convergence. More precisely,\\

\noindent (i) (Lower bound) For all $(u,v) \in {\mathscr{S}}_0$ and all sequences $(y^h)_h$ such that  $y^h \stackrel{\pi\sigma}{\to} (u,v)$ we find
$$\liminf_{h \to 0}\phi_h(y^h) \ge {\phi}_0(u,v). $$

\noindent (ii) (Optimality of lower bound) For all $(u,v) \in {\mathscr{S}}_0$ there exists a sequence $(y^h)_h$, $y^h \in \mathscr{S}_h$ \BBB for all $h$, \EEE   such that  $y^h \stackrel{\pi\sigma}{\to} (u,v)$ and  
$$\lim_{h \to 0}\phi_h(y^h) = {\phi}_0(u,v).$$

\end{theorem}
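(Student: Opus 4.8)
\textbf{Proof strategy for Theorem \ref{th: Gamma}.}
The plan is to transfer the static $\Gamma$-convergence result of \cite{hierarchy, lecumberry} from the pure elastic energy $\frac{1}{h^4}\int_\Omega W(\nabla_h y)$ to the perturbed energy $\phi_h$, using that the second-gradient term carries the subcritical scaling $h^{-\alpha p}\int_\Omega P(\nabla_h^2 y)$ with $\alpha<1$ and that the force term is linear and continuous. For the lower bound (i), let $(y^h)_h$ be a sequence with $y^h \stackrel{\pi\sigma}{\to}(u,v)$. We may assume $\liminf_{h\to 0}\phi_h(y^h)<\infty$, so along a subsequence realizing the liminf we have $y^h\in\mathscr{S}_h^M$ for some $M$, and Lemma \ref{lemma: compacti} and Lemma \ref{lemma: compacti2} apply. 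Since $P\ge 0$, we may simply discard the $P$-term and bound $\phi_h(y^h)$ from below by $\frac{1}{h^4}\int_\Omega W(\nabla_h y^h) - \frac{1}{h}\int_\Omega f y_3^h$. The elastic part is handled exactly as in \cite[Theorem 2]{hierarchy}: using the frame indifference of $W$, the rigidity estimate Lemma \ref{lemma:rigidity}, and the quadratic expansion (Lemma \ref{D-lin}(iii) with $F_0=R(y^h)^\top\nabla_h y^h$, $F_1$ adjusted, or directly the estimate $\frac{1}{h^4}\int_\Omega W(\nabla_h y^h)\ge \int_\Omega \frac12 Q_W^3(G^h(y^h)) - o(1)$ which follows from Lemma \ref{lemma: metric space-properties}(iii)), together with the weak $L^2$-convergence $G^h(y^h)\rightharpoonup G_y$ from Lemma \ref{lemma: compacti2} and weak lower semicontinuity of the convex functional $F\mapsto\int_\Omega\frac12 Q_W^3(F)$, we obtain $\liminf \frac{1}{h^4}\int_\Omega W(\nabla_h y^h)\ge \int_\Omega \frac12 Q_W^3(G_y)$. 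Since $Q_W^3$ depends only on the symmetric part and, by the zero-Poisson-ratio assumption, $Q_W^3(G_y)\ge Q_W^2(G_y'')= Q_W^2(G(u,v))$ pointwise (minimizing out the $e_3$-components), we get $\ge \int_\Omega \frac12 Q_W^2(G(u,v)) = {\phi}_0(u,v)+\int_S f v$ by \eqref{eq: G and phi}. The force term passes to the limit since $y_3^h/h = v^h \to v$ strongly in $W^{1,2}(S)\hookrightarrow L^2(S)$ and $f\in L^\infty$, giving $\frac{1}{h}\int_\Omega f y_3^h\to\int_S f v$. Combining yields $\liminf_{h\to 0}\phi_h(y^h)\ge{\phi}_0(u,v)$.

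For the optimality of the lower bound (ii), the plan is to use the explicit ansatz of Remark \ref{rem: compatible}. Given $(u,v)\in{\mathscr{S}}_0$, by density we first reduce to the case of smooth data: approximate $(u,v)$ by $(u_k,v_k)\in{\mathscr{S}}_0$ with $u_k\in W^{2,\infty}(S;\R^2)$, $v_k\in W^{3,\infty}(S)$, $u_k\to u$ in $W^{1,2}$, $v_k\to v$ in $W^{2,2}$, and matching boundary conditions; since ${\phi}_0$ is continuous with respect to this convergence (it is a quadratic functional in $e(u)+\frac12\nabla'v\otimes\nabla'v$ and $(\nabla')^2 v$, continuous on $W^{1,2}\times W^{2,2}$ by the embedding $W^{2,2}\hookrightarrow W^{1,4}$), a diagonal argument reduces the problem to smooth $(u,v)$. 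For smooth $(u,v)$, set $d^h\equiv 0$ (the zero-Poisson-ratio assumption means the corrector in the $e_3$-direction can be taken to vanish, cf.\ \eqref{eq:Q2}-\eqref{eq:Q22} and Remark \ref{rem: Poisson1}) and define $y^h$ by \eqref{eq: ansatzi}. Then $y^h\in\mathscr{S}_h$, and by construction $u^h\to u$ in $W^{1,\infty}(S)$, $v^h\to v$ in $W^{1,\infty}(S)$, so in particular $y^h\stackrel{\pi\sigma}{\to}(u,v)$. From \eqref{eq: recovery sequence-derivative} one computes $R(y^h)\to\Id$ and $G^h(y^h)\to G(u,v)$ (with the $e_3$-components vanishing) strongly in $L^2$, so by the continuity of $F\mapsto\int_\Omega\frac12 Q_W^3(F)$ and the zero-Poisson assumption, $\frac{1}{h^4}\int_\Omega W(\nabla_h y^h)\to\int_\Omega\frac12 Q_W^2(G(u,v))$. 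The perturbation term vanishes by the explicit computation \eqref{eq: newP}: $h^{-\alpha p}\int_\Omega P(\nabla_h^2 y^h)\le Ch^{p(1-\alpha)}\to 0$. The force term converges as before. Hence $\lim_{h\to 0}\phi_h(y^h)={\phi}_0(u,v)$.

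The main obstacle is ensuring that the zero-Poisson-ratio hypothesis genuinely makes the limiting integrand of the lower bound equal $\frac12 Q_W^2(G(u,v))$ and not merely bound it — i.e.\ that $\int_\Omega\frac12 Q_W^3(G_y)\ge\int_\Omega\frac12 Q_W^2(G(u,v))$ with matching recovery sequence. This requires: (a) for the lower bound, that minimizing $Q_W^3$ over the $e_3$-stretch directions at a pointwise level gives exactly $Q_W^2$ of the in-plane $2\times2$ block, which is the content of \eqref{eq:Q2}; and (b) for the recovery sequence, that the minimizer is attained at $a=0$ so that the ansatz \eqref{eq: ansatzi} with $d^h\equiv 0$ already achieves the optimal energy — this is precisely the standing assumption imposed after \eqref{eq:Q2}. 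Without the zero-Poisson assumption one would need a nontrivial corrector $d^h$ as in \cite[(119)]{hierarchy}, and matching the second-gradient bound \eqref{eq: newP} would require controlling $\|\nabla_h^2 y^h\|_{L^p}$ for the corrected ansatz, which is more delicate. A secondary point requiring care is the density reduction in (ii): one must verify that the approximating smooth fields can be chosen to satisfy the clamped boundary conditions in \eqref{eq: BClinear} exactly, which follows from the standard fact that $W^{2,\infty}$ (resp.\ $W^{3,\infty}$) fields are dense in $W^{1,2}$ (resp.\ $W^{2,2}$) while preserving the trace and the normal-derivative trace — for instance by mollification combined with a cutoff near $\partial S$ adjusted to keep the boundary data fixed.
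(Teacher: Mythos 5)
Your lower bound (i) follows the paper's proof essentially verbatim (discard $P\ge 0$, pass to $Q_W^3(G^h(y^h))$ via Lemma \ref{lemma: metric space-properties}(iii), use weak $L^2$-convergence of $G^h(y^h)$ and convexity, then drop to $Q_W^2$ of the in-plane block). One small misattribution there: the pointwise inequality $Q_W^3(F)\ge Q_W^2(F'')$ does \emph{not} use the zero-Poisson-ratio assumption; it holds for every $F$ directly from the definition \eqref{eq:Q2} of $Q_W^2$ as a minimum over $a$, since ${\rm sym}(F)=({\rm sym}(F''))^*+a\otimes e_3+e_3\otimes a$ for a suitable $a$. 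This part is correct.

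Part (ii) contains a genuine gap: you cannot take $d^h\equiv 0$ in the ansatz \eqref{eq: ansatzi}. Writing $\nabla_h y^h=\Id+hA(v)+h^2B+\dots$ with $A(v)=e_3\otimes\nabla'v-\nabla'v\otimes e_3$ skew, the Cauchy--Green tensor picks up the quadratic term $A(v)^\top A(v)=\nabla'v\otimes\nabla'v+|\nabla'v|^2\,e_3\otimes e_3$; this is exactly the computation displayed in the paper's proof, $(\nabla_h y^h)^\top\nabla_h y^h=\Id+2h^2(e(u)-x_3(\nabla')^2v)+h^2(\nabla'v\otimes\nabla'v+|\nabla'v|^2e_3\otimes e_3)+2h^2{\rm sym}(d^h\otimes e_3)+{\rm O}(h^{5/2})$. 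With $d^h\equiv0$ the limiting scaled strain is therefore $G(u,v)^*+\tfrac12|\nabla'v|^2e_3\otimes e_3$, not $G(u,v)^*$, and the limit of $h^{-4}\int_\Omega W(\nabla_h y^h)$ is $\int_\Omega\tfrac12Q_W^3\big(G(u,v)^*+\tfrac12|\nabla'v|^2e_3\otimes e_3\big)$. Since $a\mapsto Q_W^3(G^*+a\otimes e_3+e_3\otimes a)$ is a strictly convex quadratic whose (unique) minimizer is $a=0$ by the standing assumption, this quantity is \emph{strictly larger} than $Q_W^2(G(u,v))$ wherever $\nabla'v\ne0$, so your sequence is not a recovery sequence. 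The spurious $|\nabla'v|^2e_3\otimes e_3$ is purely geometric (it comes from the rotation of the plate normal) and has nothing to do with the Poisson ratio; the zero-Poisson assumption only spares you the \emph{additional} corrector built from the pointwise minimizer $a$ in \eqref{eq:Q2} as in \cite[(119)]{hierarchy}. The fix is the paper's choice: take $d^h\to d=-\tfrac12|\nabla'v|^2e_3$ in $L^2(S;\R^3)$ with $\sup_h\sqrt h\,\Vert d^h\Vert_{W^{2,\infty}(S)}<\infty$, so that $2\,{\rm sym}(d\otimes e_3)$ cancels $|\nabla'v|^2e_3\otimes e_3$; the second-gradient estimate \eqref{eq: newP} still goes through because only the mild bound $\sqrt h\,\Vert d^h\Vert_{W^{2,\infty}}\le C$ is needed, not $d\in W^{2,\infty}$. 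The rest of your argument for (ii) (density reduction via Lemma \ref{lemma: density}, continuity of $\phi_0$, vanishing of the $P$-term, treatment of the force) is in order.
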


\begin{proof}
(i) The result is essentially proved in \cite{lecumberry}. We give here the main steps for convenience of the reader.  By the   representation \eqref{eq: G and phi} and the fact that $P$ is nonnegative, for the lower bound it suffices to prove 
\begin{align}\label{eq: desired}
\liminf_{h \to 0} h^{-4}\int_\Omega W(\nabla_h y^h) \ge  \int_\Omega \frac{1}{2} Q^2_W(G(u,v)),
\end{align}
for all sequences $(y^h)_h$ with $y^h \stackrel{\pi\sigma}{\to} (u,v)$. We may suppose that \PPP $\liminf_{h \to 0}\phi_h(y^h)$  \EEE is finite as otherwise there is nothing to prove. Thus, we can assume that $y^h\in \mathscr{S}^M_h$ for $M>0$ large enough. By  Lemma \ref{lemma: metric space-properties}(iii) we find 
$$\liminf_{h \to 0} h^{-4}\int_\Omega W(\nabla_h y^h) = \liminf_{h \to 0} \int_\Omega \frac{1}{2} Q^3_W\big(G^h(y^h)\big).$$ 
Moreover, Lemma \ref{lemma: compacti} and Lemma \ref{lemma: compacti2} imply that, possibly passing to a  subsequence (not relabeled), $G^h(y^h) \rightharpoonup G$ in $L^2(\Omega;\R^{3 \times 3})$, where the $2 \times 2$ submatrix $G''$ satisfies $\BBB {\rm sym}(G'') \EEE =G(u,v)$, \BBB see \eqref{eq: Gnot}. \EEE This along with   the lower semicontinuity in $L^2$ (note that $Q_W^3$ is a positive semidefinite quadratic form) yields $\liminf_{h \to 0} h^{-4}\int_\Omega W(\nabla_h y^h) \ge \int_\Omega \frac{1}{2} Q^3_W(G)$. The fact that  $\BBB {\rm sym}(G'') \EEE = G(u,v)$ and \eqref{eq:Q2} give the desired lower bound \eqref{eq: desired}. 

(ii) By a general approximation  argument in the theory of $\Gamma$-convergence it suffices to establish the optimality of the lower bound only for sufficiently smooth mappings, precisely for $u \in W^{2,\infty}(S;\R^2)$ and $v \in W^{3,\infty}(S)$ with
\begin{align}\label{eq: BC}
u = \hat{u}, \ \  \ v = \hat{v}, \ \  \ \nabla' v  = \nabla' \hat{v} \ \ \  \text{ on }\partial S.
\end{align}
Indeed, a general \BBB $(u,v)$ can be approximated strongly in $W^{1,2}(S;\R^2) \times W^{2,2}(S)$ by such functions, \EEE see Lemma \ref{lemma: density}  below.  Note that the limiting energy ${\phi}_0$ is continuous with respect to this topology, see Lemma \ref{th: metric space-lin}(iii),(iv). 

Let us now construct recovery sequences for $u \in W^{2,\infty}(S;\R^2)$ and $v \in W^{3,\infty}(S)$ satisfying \eqref{eq: BC}. Define $d = -\frac{1}{2}|\nabla' v|^2 e_3 \in W^{2,\infty}(S;\R^3)$ and let $(d^h)_h \subset W^{2,\infty}_0(S;\R^3)$ with   $d^h \to d =  -\frac{1}{2}|\nabla' v|^2 e_3$ in $L^2(S;\R^3)$ and $\sup_h  \BBB \sqrt{h} \EEE \Vert d^h \Vert_{W^{2,\infty}(S)} < \infty$.  We take the ansatz for $y^h$ as given in Remark \ref{rem: compatible}. In Remark \ref{rem: compatible} we have already discussed that \BBB this ansatz is compatible with the convergence  \EEE $y^h \stackrel{\pi\sigma}{\to} (u,v)$. \EEE  By the representation of the scaled gradient in \eqref{eq: recovery sequence-derivative}, an elementary computation yields for the nonlinear strain 
\begin{align*}
(\nabla_h y^h)^\top \nabla_h y^h & = \Id + 2h^2 \big( e( u) - x_3 (\nabla')^2 v \big)  + h^2 \big(\nabla' v \otimes \nabla' v + |\nabla' v|^2 e_3 \otimes e_3 \big) \\ & \ \ \  + 2h^2{\rm sym}(d^h \otimes e_3)   + \BBB {\rm O}(h^{5/2}), \EEE
\end{align*}
\BBB  where we used $\sup_h  \sqrt{h}   \Vert d^h \Vert_{W^{2,\infty}(S)} < \infty$. \EEE Since there holds  $d^h \to d =  -\frac{1}{2}|\nabla' v|^2 e_3$ in $L^2(S;\R^3)$ and $\sup_h  \BBB \sqrt{h} \EEE \Vert d^h \Vert_{L^\infty(\Omega)} < \infty$,  we get
\begin{align*}
(\nabla_h y^h)^\top \nabla_h y^h & = \Id + 2h^2 \big( e( u) + \tfrac{1}{2}\nabla' v \otimes \nabla' v - x_3 (\nabla')^2 v \big)       + h^2 \omega^h
\end{align*}
for functions $\omega^h: \Omega \to \R^{3 \times 3}$ with $\Vert \omega ^h \Vert_{L^2(\Omega)} \to 0$ and $\sup_h \BBB \sqrt{h} \EEE \Vert \omega^h \Vert_{L^\infty(\Omega)} < \infty$. Therefore, \EEE
$$ (\nabla_h y^h)^\top \nabla_h y^h= \Id + 2h^2   G(u,v)^*   + h^2 \omega^h$$
where $G(u,v)^* \in L^2(\Omega; \R^{3 \times 3})$ denotes the mapping with $(G(u,v)^*)_{ij} = (G(u,v))_{ij}$ for $1 \le i,j \le 2$ and zero otherwise, see \eqref{eq: Gnot}.  Taking the square root, using the frame indifference of $W$, \BBB \eqref{assumptions-W}(i), \EEE and a Taylor expansion, we derive    (cf.\ also \cite[Proposition 19]{lecumberry})  
 \begin{align*}
 \frac{1}{h^4}\int_\Omega W(\nabla_h y^h) =  \frac{1}{h^4}\int_\Omega W\Big( \big((\nabla_h y^h)^\top \nabla_h y^h\big)^{1/2} \Big) \to \int_\Omega \frac{1}{2}Q_W^3(G(u,v)^*)
 \end{align*}
as $h \to 0$. Definition \eqref{eq:Q2} (and the assumption that the minimum is attained for $a=0$) \BBB yield \EEE $Q_W^3(G(u,v)^*) = Q_W^2(G(u,v))$. Then \eqref{eq: G and phi} implies $ \frac{1}{h^4}\int_\Omega W(\nabla_h y^h) \to {\phi}_0(u,v)$. The proof is now concluded by observing $\lim_{h \to 0} h^{-\alpha p} \int_\Omega P(\nabla_h^2 y^h) =  0$, see \eqref{eq: newP}.
\end{proof}

 \begin{rem}\label{rem: Poisson1}
{\normalfont

We remark that the assumption on $Q_W^2$ is actually not needed at the expense of a more involved recovery sequence, see \cite[equation (119)]{hierarchy}. However, \BBB the assumption \EEE will be instrumental for the lower semicontinuity of slopes, see Theorem \ref{theorem: lsc-slope} and Remark \ref{rem: Poisson2}.

}
 
 \end{rem}

 In the previous proof we have used the following density result. 
 
\begin{lemma}[Density of smooth functions with same boundary conditions]\label{lemma: density}
For each $(u,v) \in {\mathscr{S}}_0$ we find sequences $(u^h)_h \subset W^{2,\infty}(S;\R^2)$ and $(v^h)_h \subset W^{3,\infty}(S)$  such that 
\begin{align*}
(i)& \ \ u^h = \BBB \hat{u}, \EEE \ \  \ v^h = \hat{v}, \ \  \ \nabla' v^h  = \nabla' \hat{v} \ \ \ \text{ on }\partial S,\\
(ii) & \ \  {u}^h \to u \ \text{ in } \ W^{1,2}(S;\R^2), \ \ \  {v}^h \to v \ \text{ in } \ W^{2,2}(S).
\end{align*}

\end{lemma}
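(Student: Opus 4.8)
The plan is to subtract off the prescribed boundary data and reduce the statement to the classical density of smooth compactly supported functions in the zero–trace Sobolev spaces $W^{1,2}_0(S;\R^2)$ and $W^{2,2}_0(S)$. Given $(u,v)\in{\mathscr{S}}_0$, I would first set $\tilde u := u - \hat u$ and $\tilde v := v - \hat v$. Since $\hat u\in W^{2,\infty}(S;\R^2)\subset W^{1,2}(S;\R^2)$ and $\hat v\in W^{3,\infty}(S)\subset W^{2,2}(S)$, we have $\tilde u\in W^{1,2}(S;\R^2)$ and $\tilde v\in W^{2,2}(S)$; moreover, by definition of ${\mathscr{S}}_0$ the traces satisfy $\tilde u = 0$ on $\partial S$ and $\tilde v = 0$, $\nabla'\tilde v = 0$ on $\partial S$. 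As $S$ is a bounded Lipschitz domain, the trace characterisation of $W^{k,2}_0$ (see \cite{AdamsFournier:05}) then yields $\tilde u\in W^{1,2}_0(S;\R^2)$ and $\tilde v\in W^{2,2}_0(S)$.

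Next, by the very definition of these spaces as the closures of $C_c^\infty(S;\R^2)$, resp.\ $C_c^\infty(S)$, in the $W^{1,2}$-, resp.\ $W^{2,2}$-norm (concretely obtained by cutting off near $\partial S$ and mollifying), I would choose, for each $h>0$, functions $\tilde u^h\in C_c^\infty(S;\R^2)$ and $\tilde v^h\in C_c^\infty(S)$ with $\Vert \tilde u^h - \tilde u\Vert_{W^{1,2}(S)} + \Vert \tilde v^h - \tilde v\Vert_{W^{2,2}(S)}\le h$. Setting $u^h := \hat u + \tilde u^h$ and $v^h := \hat v + \tilde v^h$, the regularity $u^h\in W^{2,\infty}(S;\R^2)$ and $v^h\in W^{3,\infty}(S)$ is immediate, since $C_c^\infty$ functions lie in all these spaces and $\hat u\in W^{2,\infty}$, $\hat v\in W^{3,\infty}$.

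It then remains to check (i) and (ii), both of which are routine. For (i): $\tilde u^h$, $\tilde v^h$ and hence $\nabla'\tilde v^h$ are compactly supported in the open set $S$, so their traces on $\partial S$ vanish, giving $u^h = \hat u$, $v^h = \hat v$ and $\nabla' v^h = \nabla'\hat v$ on $\partial S$; in particular $(u^h,v^h)\in{\mathscr{S}}_0$. For (ii): $u^h - u = \tilde u^h - \tilde u\to 0$ in $W^{1,2}(S;\R^2)$ and $v^h - v = \tilde v^h - \tilde v\to 0$ in $W^{2,2}(S)$ as $h\to 0$ by construction.

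The only point requiring (standard) care is the trace characterisation of $W^{2,2}_0(S)$ on a merely Lipschitz domain, namely that a $W^{2,2}$ function whose value and full gradient vanish on $\partial S$ lies in the $W^{2,2}$-closure of $C_c^\infty(S)$; I would invoke the classical result in \cite{AdamsFournier:05} (or Grisvard) for this. Alternatively, reading the boundary conditions in the definition of ${\mathscr{S}}_0$ directly as $u-\hat u\in W^{1,2}_0(S;\R^2)$ and $v-\hat v\in W^{2,2}_0(S)$ makes the first step a tautology and the whole proof reduces to the definition of these spaces.
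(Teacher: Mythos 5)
Your proposal is correct and is essentially identical to the paper's (one-sentence) proof: subtract $\hat u$, $\hat v$, approximate the resulting zero-trace functions by smooth compactly supported functions, and add the boundary data back. You merely spell out the standard trace/closure characterisation of $W^{1,2}_0$ and $W^{2,2}_0$ that the paper leaves implicit.
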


\begin{proof}
The proof is standard: we approximate $u-\hat{u}$ and $v-\hat{v}$ by smooth functions with compact support in $S$ and add $\hat{u} \in W^{2,\infty}(S;\R^2)$, $\hat{v} \in W^{3,\infty}(S)$, respectively. 
\end{proof}

We now proceed with the lower semicontinuity of the dissipation distances.   Recall the definitions in  \eqref{eq: D,D0-1} and \eqref{eq: D,D0-2}.

\begin{theorem}[Lower semicontinuity of dissipation distances]\label{th: lscD}
Suppose that $D$ satisfies the assumptions \eqref{eq: assumptions-D}. Let $M>0$. Then for sequences $(y_1^h)_h$ and  $(y^h_2)_h$, $y_1^h,y_2^h \in \mathscr{S}^M_h$, with  $y^h_1 \stackrel{\pi\sigma}{\to} (u_1,v_1)$ and $y^h_2 \stackrel{\pi\sigma}{\to} (u_2,v_2)$  we have
$$\liminf_{h \to 0} \mathcal{D}_h(y_1^h,y_2^h ) \ge {\mathcal{D}}_0 \big ((u_1,v_1),(u_2,v_2) \big).$$
\end{theorem}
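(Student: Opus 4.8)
The plan is to linearise the dissipation distance around the rotations provided by the rigidity estimate, pass to the weak $L^2$-limit of the associated scaled strains, and then combine the weak lower semicontinuity of the nonnegative quadratic form $Q_D^3$ with the minimisation defining $Q_D^2$.

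First I would pass to a subsequence (not relabelled) along which $\liminf_{h\to0}\mathcal{D}_h(y_1^h,y_2^h)$ is attained; it is finite by Lemma \ref{lemma: metric space-properties}(ii) together with \eqref{eq:rigidity}(i). Since $y_1^h,y_2^h\in\mathscr{S}_h^M$, Lemma \ref{lemma: metric space-properties}(ii) gives
\[
\mathcal{D}_h(y_1^h,y_2^h)^2=\int_\Omega Q_D^3\big(G^h(y_1^h)-G^h(y_2^h)\big)+{\rm O}(h^\alpha),
\]
so $\liminf_{h\to0}\mathcal{D}_h(y_1^h,y_2^h)^2=\liminf_{h\to0}\int_\Omega Q_D^3(G^h(y_1^h)-G^h(y_2^h))$. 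By $y_i^h\stackrel{\pi\sigma}{\to}(u_i,v_i)$ and Lemma \ref{lemma: compacti2}, after extracting a further subsequence we have $G^h(y_i^h)\rightharpoonup G_i$ weakly in $L^2(\Omega;\R^{3\times3})$, $i=1,2$, whose in-plane $2\times2$ blocks satisfy ${\rm sym}(G_i'')=G(u_i,v_i)$ with $G(u_i,v_i)$ as in \eqref{eq: Gnot}; hence $G^h(y_1^h)-G^h(y_2^h)\rightharpoonup G_1-G_2$ in $L^2$.

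By Lemma \ref{D-lin}(ii) the form $Q_D^3$ is nonnegative, so $F\mapsto\int_\Omega Q_D^3(F)$ is convex and strongly continuous, hence sequentially weakly lower semicontinuous on $L^2(\Omega;\R^{3\times3})$; therefore
\[
\liminf_{h\to0}\int_\Omega Q_D^3\big(G^h(y_1^h)-G^h(y_2^h)\big)\ \ge\ \int_\Omega Q_D^3(G_1-G_2).
\]
Next I would reduce the full strain to its in-plane part: using $Q_D^3(F)=Q_D^3({\rm sym}(F))$ (Lemma \ref{D-lin}(ii)) and the fact that any symmetric $3\times3$ matrix $H$ with upper-left block $H''$ can be written as $H=(H'')^*+a\otimes e_3+e_3\otimes a$ for a suitable $a\in\R^3$, the definition \eqref{eq:Q22} yields the pointwise bound
\[
Q_D^3(G_1-G_2)=Q_D^3\big({\rm sym}(G_1-G_2)\big)\ \ge\ Q_D^2\big({\rm sym}(G_1''-G_2'')\big)=Q_D^2\big(G(u_1,v_1)-G(u_2,v_2)\big).
\]
Integrating and invoking the identity \eqref{eq: G and D} gives $\int_\Omega Q_D^3(G_1-G_2)\ge{\mathcal{D}}_0((u_1,v_1),(u_2,v_2))^2$. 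Chaining the three displays and taking square roots concludes the proof.

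I do not expect a genuine obstacle here, since the delicate analysis is already contained in Lemmas \ref{lemma: metric space-properties} and \ref{lemma: compacti2}; the two points requiring care are that the error term in Lemma \ref{lemma: metric space-properties}(ii) is uniform over $\mathscr{S}_h^M$ and vanishes as $h\to0$, and that the reduction from $Q_D^3$ to $Q_D^2$ runs in the \emph{favourable} direction precisely because $Q_D^2$ is an infimum over out-of-plane stretches — so that, in contrast with the recovery sequence in Theorem \ref{th: Gamma}, the assumption that the minimum in \eqref{eq:Q22} is attained at $a=0$ is not needed for this lower bound (the uncontrolled out-of-plane components of $G_1-G_2$ are simply absorbed by the minimisation).
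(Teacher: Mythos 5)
Your proposal is correct and follows essentially the same route as the paper: the paper's proof of Theorem \ref{th: lscD} simply refers back to the argument for \eqref{eq: desired} in Theorem \ref{th: Gamma}(i), replacing Lemma \ref{lemma: metric space-properties}(iii) by Lemma \ref{lemma: metric space-properties}(ii) and \eqref{eq: G and phi} by \eqref{eq: G and D}, which is exactly your chain of Taylor expansion, weak $L^2$-convergence of the scaled strains from Lemma \ref{lemma: compacti2}, weak lower semicontinuity of the nonnegative quadratic form $Q_D^3$, and reduction to $Q_D^2$ via the minimisation in \eqref{eq:Q22}. Your closing observation that the lower bound only uses $Q_D^2$ as an infimum (so the $a=0$ assumption is not needed here) is accurate and consistent with the paper.
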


\begin{proof}
The argument is \BBB similar to the one \EEE in \eqref{eq: desired}, see the proof of Theorem \ref{th: Gamma}(i), with the difference that we employ Lemma \ref{lemma: metric space-properties}(ii) in place of Lemma \ref{lemma: metric space-properties}(iii) and \eqref{eq: G and D} in place of \eqref{eq: G and phi}.
\end{proof}

 We close this section with the fundamental property that the local slopes are lower semicontinuous along the \PPP passage from the 3D to the 2D setting. \EEE As emphasized before, this is crucial for the application of the theory in \cite{Ortner, S2}, \BBB see \eqref{eq: implication}. \EEE   Recall the definition of $Q_W^2$, $Q_D^2$ in \eqref{eq:Q2}-\eqref{eq:Q22}. \BBB The fact that the minimum is attained for $a = 0$ implies
\begin{align}\label{eq: defA}
Q_W^3(F) =  Q_W^2(F'') + Q_W^3(F - F^*), \ \ \ \ \ Q_D^3(F) =  Q_D^2(F'') + Q_D^3(F-F^*)
\end{align}
for all $F \in \R^{3 \times 3}$, where $F''$ denotes the $2 \times 2$ matrix with entries $F''_{ij} = F_{ij}$ for $1 \le i,j \le 2$, and $F^*$ denotes the $3 \times 3$ matrix with entries $F^*_{ij} = F_{ij}$ for $1 \le i,j \le 2$, and zero otherwise.   \EEE

\begin{theorem}[Lower semicontinuity of slopes]\label{theorem: lsc-slope}
For each sequence $(y^h)_h$ with $y^h \in  {\mathscr{S}}_{h}^M$ such that $y^h \stackrel{\pi\sigma}{\to}  (u,v)$ we have 
$$\liminf_{n \to \infty}|\partial {\phi}_{h}|_{{\mathcal{D}}_{h}}(y^h) \ge |\partial  {\phi}_0|_{ {\mathcal{D}}_0}(u,v).$$ 
\end{theorem}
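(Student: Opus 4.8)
The plan is to fix a sequence $(y^h)_h$ with $y^h \in \mathscr{S}_h^M$ and $y^h \stackrel{\pi\sigma}{\to} (u,v)$, assume without loss of generality that $\liminf_h |\partial \phi_h|_{\mathcal{D}_h}(y^h) =: L < \infty$, and pass to a subsequence realizing the liminf. Using the representation of the 2D local slope in Lemma~\ref{lemma: slopes}, it suffices to fix an arbitrary competitor $(u',v') \in \mathscr{S}_0$ with $(u',v') \neq (u,v)$ and to show
$$ L \ge \frac{\big(\phi_0(u,v) - \phi_0(u',v') - \Phi^2_M(\mathcal{D}_0((u,v),(u',v'))) \big)^+}{\Phi^1(\mathcal{D}_0((u,v),(u',v')))}, $$
and then take the supremum over $(u',v')$. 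By density (Lemma~\ref{lemma: density}) combined with the continuity of $\phi_0$ and $\mathcal{D}_0$ (Lemma~\ref{th: metric space-lin}(iii),(iv)) it is enough to treat smooth $(u',v')$, i.e.\ $u' \in W^{2,\infty}$, $v' \in W^{3,\infty}$ with the correct boundary values. Set $(u_s,v_s) = (1-s)(u,v) + s(u',v')$ for $s \in (0,1)$ small; by Lemma~\ref{th: convexity2} these satisfy $\mathcal{D}_0((u,v),(u_s,v_s)) \le s\Phi^1(D)$ and $\phi_0(u_s,v_s) \le (1-s)\phi_0(u,v) + s\phi_0(u',v') + s\Phi^2_M(D)$, where $D := \mathcal{D}_0((u,v),(u',v'))$.

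\textbf{Construction of competitors in 3D.} For each $s$ I would construct a perturbation $z^h_s \in W^{2,p}_{0,\partial S}(\Omega;\R^3)$ so that $w^h_s := y^h + z^h_s$ plays the role of a competitor for the 3D local slope at $y^h$, in such a way that Lemma~\ref{lemma: strong convergence} applies with limiting objects $E^s, F^s$ corresponding precisely to the linear interpolation direction. Concretely, recalling the ansatz of Remark~\ref{rem: compatible}, the displacements of $w^h_s$ should correspond to $(u_s, v_s)$ rather than $(u,v)$, so one takes $z^h_s$ of the form $s\big(h^2(\tilde u - x_3 \nabla'\tilde v\text{-part}) + h\tilde v\text{-part} + h^3 x_3 \tilde d^h\big)$ with $(\tilde u,\tilde v) = (u'-u, v'-v)$ and a suitable corrector $\tilde d^h$ chosen so that the out-of-plane strain matches: by Lemma~\ref{lemma: strong convergence}(c) one needs $E^s e_3 = \tfrac12(|\nabla' v_s|^2 - |\nabla' v|^2)e_3$, which forces the choice of the $e_3$-component of the corrector. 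The key quantitative requirements are the scalings in \eqref{eq: strong convergence assumptions}: assumptions (i)--(ii) follow from the smoothness of $(u',v')$ and the factor $s$ in front, (iii) from the $x_3$-linear structure of the ansatz (the skew part depends on $x_3$ only through the lower-order $h^3 x_3$ term), and (iv) holds with $\eta(h) \to 0$ because $d^h \to d$ and the ansatz is asymptotically exact. Then Lemma~\ref{lemma: strong convergence} yields $w^h_s \in \mathscr{S}_h^{M'}$, the strong convergence $\|(G^h(y^h) - G^h(w^h_s)) - (G_y - G^s_w)\|_{L^2} \le s\rho(h)$, and the linear bound $\|G^h(y^h) - G^h(w^h_s)\|_{L^2} \le Cs$; moreover by construction the limit $(\bar u_s, \bar v_s)$ of $w^h_s$ equals $(u_s,v_s)$.

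\textbf{Passing to the limit.} With $w^h_s$ in hand, I estimate the 3D slope from below along the path $w^h_s \to y^h$ as $s \to 0$: $|\partial \phi_h|_{\mathcal{D}_h}(y^h) \ge \limsup_{s \to 0} \frac{(\phi_h(y^h) - \phi_h(w^h_s))^+}{\mathcal{D}_h(y^h, w^h_s)}$. Using Lemma~\ref{lemma: metric space-properties}(ii),(iv): the numerator is $\phi_h(y^h) - \phi_h(w^h_s) = \int_\Omega \tfrac12 Q_W^3(G^h(y^h)) - \int_\Omega \tfrac12 Q_W^3(G^h(w^h_s)) + \Delta(y^h) - \Delta(w^h_s) + (\text{second-gradient terms})$; the $P$-terms vanish in the limit by \eqref{eq: newP}-type estimates (our scaling $\alpha<1$), and $|\Delta(y^h) - \Delta(w^h_s)| \le Ch^\alpha\|G^h(y^h) - G^h(w^h_s)\|_{L^2} \le Ch^\alpha \cdot Cs \to 0$. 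Expanding the quadratic forms and using the strong convergence $G^h(y^h) - G^h(w^h_s) \to G_y - G^s_w$ in $L^2$ together with the weak convergence $G^h(y^h) \rightharpoonup G_y$, I get that the numerator converges (after dividing appropriately) to $\int_\Omega \tfrac12(Q_W^3(G_y) - Q_W^3(G^s_w))$; by the relation \eqref{eq: defA} and the identification of the out-of-plane part from Lemma~\ref{lemma: strong convergence}(c), the $e_3$-components match the minimizing choice $a=0$, so this equals $\phi_0(u,v) - \phi_0(u_s,v_s)$ up to $o(s)$. Similarly the denominator: $\mathcal{D}_h(y^h,w^h_s)^2 = \int_\Omega Q_D^3(G^h(y^h) - G^h(w^h_s)) + o(s^2) \to \int_\Omega Q_D^3(G_y - G^s_w)$, which by \eqref{eq: defA}, \eqref{eq: G and D}, and again the out-of-plane identification equals $\mathcal{D}_0((u,v),(u_s,v_s))^2 + o(s^2)$. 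Dividing and letting $s \to 0$, using the Lemma~\ref{th: convexity2} bounds, then taking the liminf in $h$ gives exactly the lower bound in the displayed inequality above, and the supremum over $(u',v')$ finishes the proof.

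\textbf{Main obstacle.} The delicate point is \emph{constructing $z^h_s$ with all four scalings in \eqref{eq: strong convergence assumptions} holding simultaneously, uniformly in both $h$ and $s$, while ensuring the limiting objects $E^s, F^s$ are exactly those of the linear interpolation} — in particular the out-of-plane matching condition from Lemma~\ref{lemma: strong convergence}(c), which is precisely where the zero-Poisson-ratio assumption (the minimum in \eqref{eq:Q2}-\eqref{eq:Q22} at $a=0$) is used: it guarantees that no extra $e_3$-stretch corrector is needed beyond $d = -\tfrac12|\nabla' v|^2 e_3$, so that the ansatz corrector can be chosen linearly in $s$ and the rate of convergence of the mutual recovery sequence stays linear in $s$ as $h \to 0$. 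Keeping track of the $o(s)$ versus $o(s^2)$ errors in numerator and denominator so that their ratio behaves correctly is the bookkeeping core of the argument.
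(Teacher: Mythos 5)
Your overall architecture coincides with the paper's: reduce to the representation of the 2D slope in Lemma \ref{lemma: slopes}, build competitors $w^h_s = y^h + z^h_s$ linear in $s$ from the recovery-sequence ansatz of Remark \ref{rem: compatible}, feed them into Lemma \ref{lemma: strong convergence} to control the strain difference, and establish separate estimates for the dissipation, the $W$-part and the $P$-part before letting $s \to 0$, then $h \to 0$, and finally taking the supremum over competitors. You also correctly locate where the zero-Poisson-ratio assumption enters, namely in the out-of-plane matching of Lemma \ref{lemma: strong convergence}(c).

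There is, however, a genuine gap in your construction of $z^h_s$: you take the perturbation direction $(\tilde u,\tilde v) = (u'-u, v'-v)$, where $(u,v)$ is only the $W^{1,2}(S;\R^2)\times W^{2,2}(S)$ limit of $(y^h)_h$. The ansatz of Remark \ref{rem: compatible} and, more importantly, hypothesis \eqref{eq: strong convergence assumptions}(i) of Lemma \ref{lemma: strong convergence} require $\Vert\nabla_h z^h_s\Vert_{L^\infty(\Omega)} + \Vert\nabla^2_h z^h_s\Vert_{L^\infty(\Omega)} \le Msh$, which forces $\tilde u \in W^{2,\infty}(S;\R^2)$ and $\tilde v \in W^{3,\infty}(S)$; smoothness of the competitor $(u',v')$ alone does not give this, since $(u,v)$ itself is not smooth. (The same regularity is needed for the $P$-term estimate, your item (iii), and for the $x_3$-oscillation bound \eqref{eq: strong convergence assumptions}(iii).) The paper resolves this by an additional approximation of the \emph{base point}: it fixes smooth $(u_\eps, v_\eps)$ with $\Vert u_\eps - u\Vert_{W^{1,2}} + \Vert v_\eps - v\Vert_{W^{2,2}} \le \eps$, uses the direction $s(\tilde u - u_\eps, \tilde v - v_\eps)$, i.e.\ $z^h_s = \tilde y^h_s - y^h_\eps$ as a difference of two smooth recovery sequences, so that the limit of $w^h_s$ is $(\hat u^\eps_s, \hat v^\eps_s) = (u,v) + s(\tilde u - u_\eps, \tilde v - v_\eps)$ rather than your exact convex combination $(u_s,v_s)$. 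This produces $O(s\eps)$ errors in all three estimates \eqref{eq: three properties} (see \eqref{eq:Z4}), which must be carried through the limits $s\to 0$ and $h\to 0$ and are only removed at the very end by letting $\eps \to 0$ and invoking the continuity of $\phi_0$ and $\mathcal{D}_0$. Without this extra layer your competitors are not admissible for Lemma \ref{lemma: strong convergence} and the argument does not close.
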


\begin{proof} We divide the proof into several steps. We first define approximations of $(u,v)$ which allow us to work with more regular functions (Step 1). We then construct  \emph{competitor sequences} $(w^h_s)_{h,s}$ \EEE for the local slope in the 3D setting  satisfying $w^h_s \to y^h$ as $s \to 0$    (Step 2). Afterwards, \PPP we identify the limiting strain of the sequences $(w^h_s)_h$ (Step 3), \EEE and  we \EEE prove the lower semicontinuity (Step 4). Some technical estimates are contained in Steps 5--7.

\emph{Step 1: Approximation.} By Lemma \ref{lemma: density}, for $\eps>0$ we fix $u_\eps \in  W^{2,\infty}(S;\R^2)$ and $v_\eps \in  W^{3,\infty}(S)$  with $(u_\eps,v_\eps) \in {\mathscr{S}}_0$ and
\begin{align}\label{eq: eps approx}
\Vert u_\eps - u \Vert_{W^{1,2}(S)} + \Vert v_\eps - v \Vert_{W^{2,2}(S)}  \le \eps .
\end{align}
This approximation will be necessary to construct sufficiently regular  competitor sequences \EEE for the local slope of the 3D setting.

We further fix $\tilde{u}\in W^{2,\infty}(S;\R^2)$ and $\tilde{v}\in W^{3,\infty}(S)$ with $(\tilde{u},\tilde{v}) \in {\mathscr{S}}_0$, and satisfying $\tilde{u} \neq u,u_\eps$, $\tilde{v} \neq v,v_\eps$.   The pair $(\tilde{u},\tilde{v})$ will represent the  competitor in the local slope of the 2D setting, see Lemma \ref{lemma: slopes}. Below in \eqref{eq: reg is enough}, we will see that by approximation  it is enough to work with functions of this regularity.   The convex combinations 
\begin{align}\label{eq: convexi}
(\tilde{u}_{s}, \tilde{v}_{s}) := (1-s)(u_\eps,v_\eps) + s(\tilde{u},\tilde{v}), \ \  s \in [0,1],
\end{align}
will be the starting point for the construction of competitor sequences \PPP $(w^h_s)_{h,s}$ \EEE for the 3D setting.  In the following, $\tilde{C}, C_\eps$ denote generic constant which may vary from line to line, where $\tilde{C}$ may depend on $\tilde{u},u,\tilde{v},v$, and $C_\eps$ additionally on $\eps$. \EEE

\emph{Step 2: Construction of competitor  sequences $(w^h_s)_{h,s}$.\EEE} We choose recovery sequences $y_\eps^h, \tilde{y}_s^h$ related to $(u_\eps,v_\eps)$ and $(\tilde{u}_{s},\tilde{v}_{s})$, exactly as in the proof of Theorem \ref{th: Gamma}(ii):  define $d_\eps = -\frac{1}{2}|\nabla' v_\eps|^2 e_3$, $\tilde{d}_s = -\frac{1}{2}|\nabla' \tilde{v}_{s}|^2 e_3$ and let $(d^h_\eps)_h, (\tilde{d}^h_s)_h \subset W^{2,\infty}_0(S;\R^3)$ be  sequences with $d^h_\eps \to d_\eps$ and $\tilde{d}^h_s\to \tilde{d}_s$  in  $L^2(S;\R^3)$. \PPP In view of \eqref{eq: convexi}, \EEE this can be done in such a way that there holds
\begin{align}\label{eq: uniform}
\Vert \tilde{d}^h_s -  {d}^h_\eps\Vert_{L^2(S)} \le s\rho(h), \ \ \ \ \ \ \ \ \  \sqrt{h} \Vert \tilde{d}^h_s -  {d}^h_\eps \Vert_{W^{2,\infty}(S)} \le C_\eps s,
\end{align}
where $\rho(h)$ depends on $v_\eps$, $\tilde{v}$, and satisfies $\rho(h) \to 0$ as $h \to 0$. 

We take the ansatz for $y^h_\eps, \tilde{y}_s^h$ as given in Remark \ref{rem: compatible}  and observe that   $y^h_\eps, \tilde{y}^h_s$ satisfy the boundary conditions, i.e., ${z}^h_s := \tilde{y}_s^h -y_\eps^h \in W^{2,p}_{0,  \partial S}(\Omega;\R^3)$. \PPP For $h>0$ small and $s \in [0,1]$, \EEE we define  
\begin{align}\label{eq: ulitmate w-def}
w^h_s := y^h + z^h_s= y^h - y_\eps^h + \tilde{y}_s^h.
\end{align}
By \eqref{eq: uniform}, \eqref{eq: recovery sequence-derivative}-\eqref{eq: newP} (with $\tilde{u}_s,u_\eps$ and  $\tilde{v}_s,v_\eps$ in place of $u$ and $v$, respectively) and the fact that $(\tilde{u}_s-u_\eps, \tilde{v}_s-v_\eps )=  s(\tilde{u}-u_\eps, \tilde{v}-v_\eps )$ we see
\begin{align}\label{eq: w1}
(i) & \ \  \Vert \nabla_h z^h_s\Vert_{L^{\infty}(\Omega)}  +  \Vert \nabla^2_h z^h_s\Vert_{L^{\infty}(\Omega)}  \le C_\eps sh,   \ \ \ \ \ \    \Vert {\rm sym}(\nabla_hz^h_s) \Vert_{L^2(\Omega)} \le C_\eps sh^2,\notag\\
(ii) & \ \ \big|  {\rm skew} (\nabla_h z^h_s)(x',x_3)  -  \int_{I} {\rm skew}(\nabla_h z^h_s)(x',x_3) \, dx_3 \big|   \le C_\eps sh^{5/2} \EEE \ \ \ \text{for a.e.\ $x \in \Omega$}.  
\end{align}
\PPP This shows that the assumptions   \eqref{eq: strong convergence assumptions}(i)-(iii) are satisfied for $(z^h_s)_{s,h}$ (for a constant $M=M(C_\eps)$). From \eqref{eq: recovery sequence-derivative} and \eqref{eq: uniform} we also get that \eqref{eq: strong convergence assumptions}(iv) holds for suitable $E^s$ and $F^s$. In particular, by the definition of $\tilde{d}^h_s$ and $d^h_\eps$ we observe that \EEE
\begin{align}\label{eq:Z3}
\PPP E^s \,  e_3 = \EEE \lim_{h\to 0} \frac{1}{h^2}{\rm sym}(\nabla_hz^h_s) \, e_3 = \lim_{h \to 0} {\rm sym}\big((\tilde{d}^h_s - d^h_\eps) \otimes e_3 \big) \,  e_3 =   \frac{1}{2}(|\nabla' v_\eps|^2 - |\nabla' \tilde{v}_{s}|^2) \, e_3.   
\end{align}
 Then  Lemma \ref{lemma: strong convergence}(a) implies $w^h_s \in \mathscr{S}^{M'}_h$ for a constant $M'>0$  sufficiently large depending on $\eps$,  but independent of $s,h$. Thus, by   \eqref{eq: ansatzi}, \eqref{eq: ulitmate w-def},  $\tilde{d}^h_s \to  {d}^h_\eps$ in $L^2(S;\R^3)$ as $s \to 0$ (see \eqref{eq: uniform}), and a compactness argument (see Lemma \ref{th: metric space}(ii)), one can check that 
\begin{align}\label{eq: sconv}
w_s^h \rightharpoonup y^h \ \  \ \text{in} \ \ \ W^{2,p}(\Omega;\R^3) \  \text{ as $s \to 0$}.
\end{align}

\PPP \emph{Step 3: Identification of limiting strains.} \EEE 
Since the ansatz for $(y_\eps^h)_h$ and $(\tilde{y}_s^h)_h$  is compatible with  the convergence results in Lemma \ref{lemma: compacti},   the convergence in \eqref{eq: convergence u,v-2} holds, i.e., the scaled  displacement fields corresponding to $(y_\eps^h)_h$ and $(\tilde{y}_s^h)_h$ converge to $(u_\eps,v_\eps)$ and $(\tilde{u}_{s}, \tilde{v}_{s})$, respectively. Thus, \PPP in view of \eqref{eq: ulitmate w-def}, \EEE the scaled  displacement fields corresponding to $w^h_s$ converge to $(u - u_\eps + \tilde{u}_{s}, v- v_\eps+ \tilde{v}_{s})$. In the following, it will be convenient to work with  the convex combination defined by 
\begin{align}\label{eq: convexi2}
(\hat{u}_{s}^\eps, \hat{v}_{s}^\eps) :=  (u,v) + s(\tilde{u}-u_\eps,\tilde{v}-v_\eps),  \ \ \ s \in [0,1].
\end{align} 
In fact, by \eqref{eq: convexi} we see that the scaled  displacement fields corresponding to $w^h_s$ converge to $(\hat{u}_{s}^\eps, \hat{v}_{s}^\eps)$.

The limits of the mappings $G^h(y^h)$ and $G^h(w^h_s)$ given by Lemma \ref{lemma: compacti2} are denoted by $G_y$ and  $G^s_w$,   i.e, we have (up to a subsequence)
\begin{align}\label{eq: weakcovi}
G^h(y^h)  \rightharpoonup G_y, \ \ \ \ \ \ \ G^h(w^h_s) \rightharpoonup G^s_w  \ \ \ \ \ \text{weakly in $L^2(\Omega;\R^{3 \times 3})$},
\end{align}
where the $2 \times 2$ submatrices $G_y''$ and  $(G^s_w)''$ satisfy  
\begin{align}\label{eq: two by two}
  {\rm sym}(G''_y) \EEE =  G(u,v) \ \ \ \ \  {\rm sym}((G^s_w)'') \EEE = G(\hat{u}_{s}^\eps, \hat{v}_{s}^\eps),
 \end{align}
respectively. (Recall notation \eqref{eq: Gnot}.) Above we have checked that the assumptions  \eqref{eq: strong convergence assumptions} hold for $(z^h_s)_{s,h}$. We can therefore apply Lemma \ref{lemma: strong convergence}(b) and obtain 
\begin{align}\label{eq: slope-lsc-1}
(i)& \ \ \Vert  (G^h(y^h) - G^h(w^h_s)) -   \big(G_y - G^s_w  \big)\Vert_{L^2(\Omega)} \le s\rho_\eps(h), \notag\\
(ii) & \  \  \Vert  G^h(y^h) - G^h(w^h_s) \Vert_{L^2(\Omega)} \le C_\eps s,
\end{align}
where $\rho_\eps(h)$ depends on $\eps$ and satisfies $\rho_\eps(h) \to 0$ as $h \to 0$. \BBB Moreover, in view of \eqref{eq: convexi} and \eqref{eq: convexi2}, an elementary but tedious computation leads to
$$|\nabla' v_\eps|^2 - |\nabla' \tilde{v}_{s}|^2 -|\nabla' v|^2 + |\nabla'\hat{v}_{s}^\eps|^2 = 2s\langle \nabla' v_\eps - \nabla' v, \nabla' v_\eps - \nabla' \tilde{v}  \rangle.$$
 \PPP Then \eqref{eq:Z3} and  Lemma \ref{lemma: strong convergence}(c) yield ${\rm sym}(G_y - G^s_w) \, e_3 = s\langle \nabla' v_\eps - \nabla' v, \nabla' v_\eps - \nabla' \tilde{v}  \rangle \, e_3$. \EEE  Thus, by \eqref{eq: eps approx}    and H\"older's inequality we get 
\begin{align}\label{eq:Z4}
\Vert{\rm sym}(G_y - G^s_w) \, e_3 \Vert_{L^2(\Omega)} \le    s\Vert \nabla' \tilde{v} - \nabla' v_\eps \Vert_{L^{\PPP 4\EEE}(\Omega)} \Vert \nabla' v_\eps- \nabla' v \Vert_{L^{\PPP 4\EEE}(\Omega)}  \le  \tilde{C} s \eps.
\end{align}
\PPP Likewise, \EEE by \eqref{eq: Gnot},   \eqref{eq: convexi2}, and  \eqref{eq: two by two} one can  check by an elementary expansion that
\begin{align}\label{eq: g1,g2}
{\rm sym}(G_y - G^s_w) = s g_1 + s^2 g_2 \ \ \ \text{for} \ g_1,g_2 \in L^2(S;\R^{3\times 3}_{\rm sym}).
\end{align}
where $g_1,g_2$ depend on  $u,\tilde{u}, u_\eps, v,\tilde{v},v_\eps$.
\EEE

\emph{Step 4: Lower semicontinuity of slopes.}  We will show that there exist \PPP a \EEE continuous function $\eta_\eps: [0,\infty) \to  [0,\infty)$ with $\eta_\eps(0) = 0$ and a constant $\tilde{C}$ depending on $u,v,\tilde{u},\tilde{v}$ such that for all $s \in [0,1]$ there holds 
\begin{align}\label{eq: three properties}
(i) & \ \  {\mathcal{D}}_{h}(y^h,w^h_s)  \le{\mathcal{D}}_0 \big((u,v),(\hat{u}_{s}^\eps,\hat{v}_{s}^\eps) \big) + s \eta_\eps(h) + \tilde{C}s\eps, \notag \\
(ii) & \ \   h^{-4}\int_\Omega  \big( W(\nabla_h y^h) - W(\nabla_h w^h_s) \big) \ge {\phi}_0(u, v) - {\phi}_0(\hat{u}_{s}^\eps, \hat{v}_{s}^\eps)   - s \eta_\eps(h) - \tilde{C}s\eps, \notag\\
(iii) & \ \   h^{-p\alpha} \int_\Omega \big( P(\nabla^2_h y^h) - P(\nabla^2_h w^h_s) \big) \ge - s \eta_\eps(h).  
\end{align}
We defer the proof of \eqref{eq: three properties} to Steps 5--7 below and now prove the lower semicontinuity. \BBB Recall the definition of $\phi_h$ in \eqref{nonlinear energy-rescale}. \EEE By combining the three estimates in \eqref{eq: three properties}  we obtain for all $s \in [0,1]$ 
\begin{align*}
\frac{({\phi}_{h}(y^h) - {\phi}_{h}(w^h_s))^+}{{\mathcal{D}}_{h}(y^h,w^h_s)}&  \ge  \frac{\big( {\phi}_0(u,v) - {\phi}_0(\hat{u}_{s}^\eps,\hat{v}_{s}^\eps) - 2s \eta_\eps(h) - s\tilde{C}\eps\big)^+ }{{\mathcal{D}}_{0}((u,v),(\hat{u}_{s}^\eps,\hat{v}_{s}^\eps)) + s\eta_\eps(h) + s\tilde{C}\eps}.
\end{align*}
\BBB Recall that $y^h \in \mathscr{S}^M_h$ and Theorem \ref{th: Gamma}(i) imply $\phi_0(u,v) \le M$. \EEE By applying  Lemma \ref{th: convexity2} with $(u_0,v_0) = (u,v)$ and $(u_1,v_1) = (\hat{u}_{1}^\eps,\hat{v}_{1}^\eps)$  we get
 \begin{align*}
\frac{({\phi}_{h}(y^h) - {\phi}_{h}(w^h_s))^+}{{\mathcal{D}}_{h}(y^h,w^h_s)}  &\ge \frac{s\big({\phi}_0(u,v) - {\phi}_0(\hat{u}_{1}^\eps,\hat{v}_{1}^\eps) - \Phi^2_M\big({\mathcal{D}}_0((u,v),(\hat{u}_{1}^\eps,\hat{v}_{1}^\eps))\big)    - 2 \eta_\eps(h) - \tilde{C}\eps \big)^+ }{s\Phi^1\big({\mathcal{D}}_0((u,v),(\hat{u}_{1}^\eps,\hat{v}_{1}^\eps))\big) +  \PPP s \EEE \eta_\eps(h) + \PPP s \EEE \tilde{C}\eps}, 
\end{align*}
where $\Phi^1$ and $\Phi^2_M$ are the functions introduced in Lemma \ref{th: convexity2}.    Thus, in view of \eqref{eq: sconv}, Lemma \ref{th: metric space}(iii), and  Definition \ref{main def2}, we find by letting $s \to 0$  
$$|\partial {\phi}_{h}|_{{\mathcal{D}}_{h}}(y^h)  \ge \frac{\big( {\phi}_0(u,v) - {\phi}_0(\hat{u}_{1}^\eps,\hat{v}_{1}^\eps) - \Phi^2_M\big({\mathcal{D}}_0((u,v),(\hat{u}_{1}^\eps,\hat{v}_{1}^\eps))\big)  - 2 \eta_\eps(h) - \tilde{C}\eps \big)^+ }{\Phi^1\big({\mathcal{D}}_0((u,v),(\hat{u}_{1}^\eps,\hat{v}_{1}^\eps))\big) +  \eta_\eps(h) + \tilde{C}\eps}.$$
\PPP Letting \EEE $h \to 0$ we then derive
$$ \liminf_{h \to 0} |\partial {\phi}_{h}|_{{\mathcal{D}}_{h}}(y^h) \ge \frac{\big( {\phi}_0(u,v) - {\phi}_0((\hat{u}_{1}^\eps, \hat{v}_{1}^\eps)) - \Phi^2_M\big({\mathcal{D}}_0((u,v),(\hat{u}_{1}^\eps, \hat{v}_{1}^\eps))\big) - \tilde{C}\eps \big)^+ }{\Phi^1\big({\mathcal{D}}_0((u,v),(\hat{u}_{1}^\eps, \hat{v}_{1}^\eps))\big) + \tilde{C}\eps }. 
$$
We observe that $\hat{u}_{1}^\eps \to \tilde{u}$ in $W^{1,2}(S;\R^2)$ and $\hat{v}_{1}^\eps \to \tilde{v}$ in $W^{2,2}(S;\R^2)$ as $\eps \to 0$, see \eqref{eq: eps approx} and \eqref{eq: convexi2}. Thus, letting $\eps \to 0$, using   Lemma \ref{th: metric space-lin}(iii),(iv), and then taking   the supremum with respect to $(\tilde{u},\tilde{v})$ we get
\begin{align}\label{eq: reg is enough}
\liminf_{h \to 0} |\partial {\phi}_{h}|_{{\mathcal{D}}_{h}}(y^h)   &\ge \sup \Big\{  \frac{\Big({\phi}_0(u,v) - {\phi}_0(\tilde{u},\tilde{v}) - \Phi^2_M\big({\mathcal{D}}_0((u,v),(\tilde{u},\tilde{v})) \big) \Big)^+}{\Phi^1\big({\mathcal{D}}_0((u,v),(\tilde{u},\tilde{v}))\big)}: \notag \\& \ \ \ \ \ \ \ \ \ \ \ \ \ \ \ \  (\tilde{u},\tilde{v}) \in \bar{\mathscr{S}}^{\rm reg}_0\setminus \lbrace (u,v) \rbrace \Big\},
\end{align}
where $ \bar{\mathscr{S}}^{\rm reg}_0 \subset  {\mathscr{S}}_0$ denotes the subset consisting of functions $u,v$ with regularity $W^{2,\infty}$ and $W^{3,\infty}$, respectively.   Since each  $(\tilde{u},\tilde{v}) \in {\mathscr{S}}_0$ can be approximated  \BBB in $W^{1,2}(S;\R^2) \times W^{2,2}(S)$ \EEE  by a sequence of functions in $\bar{\mathscr{S}}^{\rm reg}_0$ (see Lemma \ref{lemma: density}) and   the right hand side is continuous with respect to that convergence (see Lemma \ref{th: metric space-lin}), the previous inequality also holds for  ${\mathscr{S}}_0$ instead of $\bar{\mathscr{S}}^{\rm reg}_0$.   The representation given in  Lemma \ref{lemma: slopes} then implies
$$\liminf_{h \to 0} |\partial {\phi}_{h}|_{{\mathcal{D}}_{h}}(y^h)  \ge |\partial  {\phi}_0|_{ {\mathcal{D}}_0}(u,v).$$
To conclude the proof, it therefore remains to show  \eqref{eq: three properties}.

\emph{Step 5: Proof of \eqref{eq: three properties}(i).} By using Lemma \ref{lemma: metric space-properties}(ii)   and \eqref{eq: slope-lsc-1} we get 
\begin{align*} 
 {\mathcal{D}}_{h}(y^h,w^h_s)^2 &\le \int_\Omega  Q_D^3\big(G^h(y^h) - G^h(w^h_s)) + Ch^{\alpha}\Vert G^h(y^h) - G^h(w^h_s) \Vert^2_{L^2(\Omega)} \notag\\
 & \le \int_\Omega  Q_D^3(G_y - G^s_w) + s^2\big(C_\eps h^{\alpha} + C(\rho_\eps(h))^2\big)\notag \\
 & = \int_\Omega  Q_D^3({\rm sym}(G_y - G^s_w)) + s^2\big(C_\eps h^{\alpha} + C(\rho_\eps(h))^2\big).  
\end{align*}
Here, the last step follows from the fact that $Q_D^3(F) = Q_D^3({\rm sym}(F))$ for $F \in \R^{3 \times 3}$, see Lemma \ref{D-lin}(ii). Then, using  \eqref{eq: defA} and   \eqref{eq: two by two}  we find
\begin{align*}
 {\mathcal{D}}_{h}(y^h,w^h_s)^2  & \le \hspace{-0.1cm}  \int_\Omega  Q_D^2\big(G(u,v)  - G(\hat{u}^\eps_s, \hat{v}^\eps_s)\big) + C \hspace{-0.1cm}\int_\Omega |{\rm sym}(G_y-G^s_w)\, e_3|^2 +s^2\big(C_\eps h^{\alpha} + C(\rho_\eps(h))^2\big).
\end{align*}
By \eqref{eq: G and D} and \eqref{eq:Z4} we conclude
\begin{align*}
 {\mathcal{D}}_{h}(y^h,w^h_s)^2   \le  {\mathcal{D}}_0 \big((u,v), (\hat{u}^\eps_s, \hat{v}^\eps_s) \big)^2 + (\tilde{C}s \eps)^2 + s^2\big(C_\eps h^{\alpha} + C(\rho_\eps(h))^2\big).
\end{align*}
This yields \eqref{eq: three properties}(i).

\emph{Step 6: Proof of \eqref{eq: three properties}(ii).} First, by Lemma \ref{lemma: metric space-properties}(iv)   and \eqref{eq: slope-lsc-1}(ii) we get 
\begin{align}\label{eq: seco1}
\frac{2}{h^4}\int_\Omega  \Big( W(y^h) - W(w^h_s) \Big) & \ge  \int_\Omega  \Big(Q_W^3(G^h( y^h)) - Q_W^3(G^h(w^h_s))\Big) - Ch^{\alpha}\Vert G^h(y^h) - G^h(w^h_s) \Vert_{L^2(\Omega)} \notag \\
& \ge  \int_\Omega  \Big(Q_W^3(G^h( y^h)) - Q_W^3(G^h(w^h_s))\Big) - C_\eps h^{\alpha}s.
\end{align}
Recall the definition of $\C_W$ in \eqref{eq: order4}. An expansion and \eqref{eq: slope-lsc-1}(i) yield
\begin{align}\label{eq: seco2}
\int_\Omega & \Big(Q_W^3(G^h( y^h)) - Q_W^3(G^h (w^h_s))\Big) \notag \\
&=  - \int_\Omega  \Big(Q_W^3\big(G^h(w^h_s)  -G^h( y^h)\big) + 2\C_W^3[G^h(y^h), G^h(w^h_s) - G^h(y^h)]\Big) \notag   \\
&\ge - \int_\Omega  \Big(Q_W^3 (G_w^s  -G_y ) + 2\C_W^3[G^h(y^h),G_w^s  -G_y ]\Big)  - Cs\rho_\eps(h).
\end{align}
Inequalities  \eqref{eq: seco1}-\eqref{eq: seco2},  the weak convergence $G^h(y^h) \rightharpoonup G_y$ in $L^2(\Omega;\R^{3 \times 3})$ (see \eqref{eq: weakcovi}) and \eqref{eq: g1,g2} yield
\begin{align*}
\frac{2}{h^4}\int_\Omega  \Big( W(y^h) - W(w^h_s) \Big) & \ge   - \int_\Omega  \Big(Q_W^3 (G_w^s  -G_y ) + 2\C_W^3[G_y,G_w^s  -G_y ]\Big)  - s\tilde{\rho}_\eps(h) 
\end{align*}
for some $\tilde{\rho}_\eps(h)$, still satisfying $\tilde{\rho}_\eps(h) \to 0$ as $h \to 0$. 
Using  the fact that $Q_W^3(F) = Q_W^3({\rm sym}(F))$ (see Lemma \ref{D-lin}(ii)),  \eqref{eq: defA}, \eqref{eq: two by two}, and  \eqref{eq:Z4} we conclude
\begin{align*}
\frac{1}{h^4}\int_\Omega  \Big( W(y^h) - W(w^h_s) \Big) & \ge \int_\Omega  \frac{1}{2}\Big(   Q_W^3(G_y) - Q_W^3 (G_w^s))\Big)  - s\tilde{\rho}_\eps(h) \\ 
& \ge  \int_\Omega  \frac{1}{2}\Big(   Q_W^2(G(u,v)) - Q_W^2 (G(\hat{u}_{s}^\eps, \hat{v}_{s}^\eps)))\Big)  - s\tilde{\rho}_\eps(h) - \tilde{C}s\eps \\
& =  {\phi}_0(u, v) - {\phi}_0(\hat{u}_{s}^\eps, \hat{v}_{s}^\eps) - s\tilde{\rho}_\eps(h) - \tilde{C}s\eps, 
\end{align*}
 where the last step follows from \eqref{eq: G and phi}.

\emph{Step 7: Proof of \eqref{eq: three properties}(iii).} By convexity of $P$ and the definition  $w^h_s = y^h - y^h_\eps+ \tilde{y}^h_s$, see \eqref{eq: ulitmate w-def}, we find
\begin{align}\label{eq: Pconvi}
h^{-p\alpha} \int_\Omega \big( P(\nabla_h^2 y^h) - P(\nabla_h^2 w^h_s) \big) \ge h^{-p\alpha} \int_\Omega \partial_Z P(\nabla_h^2 w^h_s) : (\nabla_h^2 y^h_\eps- \nabla_h^2 \tilde{y}_s^h).
\end{align}
By  H\"older's inequality and  \eqref{assumptions-P}(iii) we get 
\begin{align*}
\int_\Omega &|\partial_{Z} P(\nabla_h^2 w^h_s) : (\nabla_h^2  y^h_\eps- \nabla_h^2 \tilde{y}_s^h)| \le \Vert  \partial_Z P(\nabla_h^2 w^h_s) \Vert_{L^{p/(p-1)}(\Omega)} \Vert  \nabla_h^2 \tilde{y}^h_s- \nabla_h^2 y_\eps^h \Vert_{L^{p}(\Omega)}\\
& \le C\Big( \int_\Omega P(\nabla_h^2 w^h_s) \Big)^{\frac{p-1}{p}}   \Vert  \nabla_h^2 \tilde{y}^h_s- \nabla_h^2 y_\eps^h \Vert_{L^{p}(\Omega)} .
\end{align*}
Using $\phi_h(w^h_s) \le    M'$ since $w^h_s \in \mathscr{S}^{M'}_h$   (see Lemma \ref{lemma: strong convergence}(a))   and   \eqref{eq: w1}(i) we then derive 
\begin{align*}
\int_\Omega &|\partial_{Z} P(\nabla_h^2 w^h_s) : (\nabla_h^2  y^h_\eps- \nabla_h^2 \tilde{y}_s^h)| \le Csh\Big( \int_\Omega P(\nabla_h^2 w^h) \Big)^{\frac{p-1}{p}} \hspace{-0.2cm} \le C_\eps sh  (h^{\alpha p})^{\frac{p-1}{p}}    \le  C_\eps sh  h^{\alpha (p-1)},
\end{align*}
where $C_\eps$ depends also on $M'$. By \eqref{eq: Pconvi} and \EEE $1+ \alpha(p-1) - \alpha p >0$, we finally get that \eqref{eq: three properties}(iii) holds. 
\end{proof}

\begin{rem}\label{rem: Poisson2}
{\normalfont
The previous proof is the only point where we need the assumption that the minimum in \eqref{eq:Q2}-\eqref{eq:Q22} is attained for $a=0$ which corresponds to a model    with  zero Poisson's ratio in  $e_3$ direction. \second Although this is a restrictive assumption, it is a good approximation for cellular materials such as cork.  We also note that similar assumptions already appeared in the literature, see \cite{BK}. \rm \EEE In fact, the sequence $(w^h_s)_h$ has to be constructed in \BBB such a \EEE way   that it is a recovery sequence (up to an $\eps$-error) for \emph{both} \eqref{eq: three properties}(i) and \eqref{eq: three properties}(ii). Without this assumption, a sound 2D model would necessarily have to depend on $Q_W^3$ and $Q_D^3$ (instead of $Q_W^2$ and $Q_D^2$) and extra variables in addition to $u$ and $v$ would be required to capture the extension or contraction of the vertical fibers along the evolution. \first  (Still, we are not sure whether our analysis can be adapted to this case or not.) \rm \EEE

}
\end{rem}

\section{Proof of the main results}\label{sec results}

In this section we give the \PPP proofs \EEE of Proposition \ref{maintheorem1}-Theorem \ref{maintheorem3}.

\subsection{Existence of time-discrete solutions in   3D  and passage from 3D to 2D}
 
 In this short subsection we prove Proposition  \ref{maintheorem1} and Theorem \ref{maintheorem3}.  
 
\begin{proof}[Proof of Proposition  \ref{maintheorem1}]
Let $y^h_0 \in \mathscr{S}_h^M = \lbrace y\in \mathscr{S}_h: \phi_h(y) \le M\rbrace$ for some $M>0$. We recall that for the choices $\beta_1 = 4-\alpha p$ and $\beta_2 = 3$ we have $I_h^{\beta_1,\beta_2} = h^4\phi_h$, see \eqref{nonlinear energy-rescale}. Moreover, there holds $ \mathcal{D}_h  = h^{-2}\mathcal{D}$ by \eqref{eq: D,D0-1}. 

It is clear that the minimization problem \eqref{incremental} on $\mathscr{S}_h$ can be restricted to the set $\mathscr{S}_h^M$. Then the existence of solutions to the incremental problem \eqref{incremental} follows from the direct method of the calculus of variations: Lemma \ref{th: metric space}(ii),(iii) yield compactness with respect to the topology induced by $\mathcal{D}_h$  and Lemma \ref{th: metric space}(iv) implies lower semicontinuity. 
\end{proof}

We now proceed with the proof of Theorem \ref{maintheorem3}. We formulate our problem in the setting of Section \ref{sec: auxi-proofs}. We consider the complete metric spaces $(\mathscr{S}_h^M,\mathcal{D}_h)$ and the limiting space $({\mathscr{S}}_0,{\mathcal{D}}_0)$ together with the functionals $\phi_h$ and ${\phi}_0$.  Let $\sigma$ be the topology on ${\mathscr{S}}_0$ introduced in \eqref{eq: convergence u,v-2}. Recall the definition of the mappings $\pi_h: \mathscr{S}^M_h \to {\mathscr{S}}_0$ defined by $\pi_h(y^h) = (u^h,v^h)$ for each $y^h \in \mathscr{S}_h$ and the convergence $y^h \stackrel{\pi\sigma}{\to} (u,v)$, see  below \eqref{eq: convergence u,v-2} and see also \eqref{eq: sigma'}.

\begin{proof}[Proof of Theorem  \ref{maintheorem3}]
\second We consider \EEE an initial datum $(u_0,v_0) \in {\mathscr{S}}_0$. We first see that the family of sequences of initial data $\mathcal{B}(u_0,v_0)$ defined in \eqref{eq:datasequence} is nonempty. This follows from Theorem \ref{th: Gamma}(ii). We check that all assumptions of  Theorem \ref{th:abstract convergence 2} are satisfied. First,  \eqref{compatibility} holds by Theorem \ref{th: lscD} and  \eqref{basic assumptions2} follows from Lemma \ref{lemma: compacti}.   Also \eqref{eq: implication} is satisfied by the $\Gamma$-liminf inequality (Theorem \ref{th: Gamma}(i)) and Theorem \ref{theorem: lsc-slope}. Finally, the local slope $|\partial {\phi}_{0}|_{{\mathcal{D}}_0}$ is a  strong upper gradient for $\phi_0$  by Lemma \ref{lemma: slopes}.

\second Now we consider \EEE a sequence $(y^h_0)_h \in \mathcal{B}(u_0,v_0)$ and a null sequence $(\tau_h)_h$. The definition of $\mathcal{B}(u_0,v_0)$ (see \eqref{eq:datasequence}) implies  \eqref{eq: abstract assumptions1}(ii) with $\bar{z}_0 = (u_0,v_0)$. In particular, as $\pi_h(y_0^h) \stackrel{\pi\sigma}{\to} (u_0,v_0)$, we get that the sequence $(\pi_h(y_0^h))_h$ is bounded in $W^{1,2}(S;\R^2) \times W^{2,2}(S)$. In view of Lemma \ref{th: metric space-lin}(iii), this yields \eqref{eq: abstract assumptions1}(i).

 Let  $\tilde{Y}_{h,\tau_h}$ be a sequence of time-discrete solutions as in \eqref{ds}   with  $\tilde{Y}_{h,\tau_h}(0)=y^h_0$.  Then the  scalings $I_h^{4-\alpha p,3} = h^4\phi_h$  and $ \mathcal{D}  = h^{2}\mathcal{D}_h$  (see \eqref{nonlinear energy-rescale} and \eqref{eq: D,D0-1}) imply that $\tilde{Y}_{h,\tau_h}$ is also a time-discrete solution as in  \eqref{eq: ds-new1}-\eqref{eq: ds-new2}.  The statement of Theorem  \ref{maintheorem3} now follows from the abstract convergence result formulated in  Theorem \ref{th:abstract convergence 2}.
 \end{proof}

\subsection{Fine representation of the slope and  solutions to the equations in 2D}

This subsection is devoted to the proof of Theorem \ref{maintheorem2}. We first note that Theorem \ref{maintheorem2}(i) follows directly from Theorem \ref{maintheorem3}. Therefore, we only  need to show Theorem \ref{maintheorem2}(ii). To this end, we derive a fine representation for the local slope in the 2D setting which will allow us to relate curves of maximal slope to solutions to the equations \eqref{eq: equation-simp}.

For the following proofs we introduce the abbreviation
\begin{align}\label{eq: H shorthand}
H(u,v|\tilde{v}) =   {\rm sym}(\nabla' u)  + {\rm sym}(\nabla' v \otimes \nabla' \tilde{v}) -    x_3 (\nabla')^2 v  \in L^2(\Omega; \R^{2\times 2}_{\rm sym})
\end{align} 
for $(u,v) \in {\mathscr{S}}_0$ and $\tilde{v} \in W^{2,2}(S)$. This definition captures the linear part of the difference of two strains. More precisely, with $G(u,v)$ and $G(\bar{u},\bar{v})$ as defined  in \eqref{eq: Gnot}, we have by an elementary computation
\begin{align}\label{eq: H shorthand2}
G(u,v) - G(\bar{u},\bar{v}) = H(u-\bar{u},v-\bar{v}|v) - \frac{1}{2} (\nabla' v - \nabla' \bar{v}) \otimes (\nabla' v - \nabla' \bar{v}).
\end{align} 
Recall that $\C^2_D$ defined in \eqref{eq: order4} is a fourth order symmetric tensor inducing   the quadratic form $G \mapsto Q_D^2(G)$ which is positive definite on $\R^{2 \times 2}_{\rm sym}$ (cf.\ Lemma \ref{D-lin}(ii)). Moreover, it maps $\R^{2 \times 2}$  to $\R^{2 \times 2}_{\rm sym}$, denoted by $G \mapsto \C_D^2 G$ in the following. More precisely, the mapping $G \mapsto \C_D^2 G$  from $\R^{2 \times 2}_{\rm sym}$ to $\R^{2 \times 2}_{\rm sym}$ is bijective. By $\sqrt{\C^2_D}$ we denote its (unique) root and by $\sqrt{\C^2_D}^{-1}$ the inverse of $\sqrt{\C^2_D}$, both mappings defined on $\R^{2 \times 2}_{\rm sym}$. The same properties also hold with $\C_W^2$ in place of $\C_D^2$.

 We \BBB now prove \EEE the following fine representation for the local slope.

\begin{lemma}[Slope in the 2D setting]\label{lemma: lin-slope}
There exists a differential operator  $\mathcal{L}: {\mathscr{S}}_0 \to L^2(\Omega;\R^{2 \times 2}_{\rm sym})$ satisfying 
\begin{align}\label{lin-slope2-diffi}
\int_\Omega \mathcal{L}(u,v): H(\varphi_u,\varphi_v|v) = 0  \ \ \ \text{for all} \  (u,v) \in {\mathscr{S}}_0\ \  \text{and} \ \  (\varphi_u,\varphi_v) \in W^{1,2}_0(S;\R^2) \times W^{2,2}_0(S) 
\end{align}
such  that the local slope at $(u,v) \in {\mathscr{S}}_0$ can be represented by  
$$|\partial {\phi}_0|_{{\mathcal{D}}_0}(u,v)  =  \Big\| \sqrt{\C^2_D}^{-1}\big(\C^2_W G(u,v) + \mathcal{L}(u,v) \big) \Big\|_{L^2(\Omega)}.$$
\end{lemma}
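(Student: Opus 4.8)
The local slope is already characterized in Lemma~\ref{lemma: slopes} as a supremum over competitors $(u',v')\in\mathscr{S}_0$, and from the proof of Theorem~\ref{theorem: lsc-slope} we know (via Lemma~\ref{th: convexity2}) that it suffices to compute this supremum along the one-parameter convex combinations $(u_s,v_s)=(u,v)+s(\tilde u-u,\tilde v-v)$ for $(\tilde u,\tilde v)\in\mathscr{S}_0$, letting $s\to0$. So the first step is to compute the directional derivative
\begin{align*}
\lim_{s\to0}\frac{\phi_0(u,v)-\phi_0(u_s,v_s)}{s} = -\frac{\rm d}{{\rm d}s}\Big|_{s=0}\phi_0(u_s,v_s).
\end{align*}
Using the representation $\phi_0(u,v)=\int_\Omega\frac12 Q_W^2(G(u,v))$ from \eqref{eq: G and phi} together with the expansion \eqref{eq: H shorthand2}, the first-order term in $s$ is $\int_\Omega \C^2_W G(u,v):H(\tilde u-u,\tilde v-v\,|\,v)$, since the quadratic correction $\frac12(\nabla'(\tilde v-v))^{\otimes2}$ is of order $s^2$. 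Similarly, by Lemma~\ref{th: metric space-lin}(iii) and the definition \eqref{eq: D,D0-2}, the metric derivative of $s\mapsto(u_s,v_s)$ at $s=0$ in the direction $(\varphi_u,\varphi_v):=(\tilde u-u,\tilde v-v)$ equals $\big(\int_\Omega Q_D^2(H(\varphi_u,\varphi_v|v))\big)^{1/2}=\Vert\sqrt{\C^2_D}\,H(\varphi_u,\varphi_v|v)\Vert_{L^2(\Omega)}$, again because the higher-order term drops out. Hence, by the representation in Lemma~\ref{lemma: slopes} (whose correction functions $\Phi^1,\Phi^2_M$ satisfy $\Phi^1(t)/t\to1$, $\Phi^2_M(t)/t\to0$), we obtain
\begin{align*}
|\partial\phi_0|_{\mathcal{D}_0}(u,v) = \sup_{(\varphi_u,\varphi_v)\neq0}\frac{\big(\int_\Omega \C^2_W G(u,v):H(\varphi_u,\varphi_v|v)\big)^+}{\Vert\sqrt{\C^2_D}\,H(\varphi_u,\varphi_v|v)\Vert_{L^2(\Omega)}},
\end{align*}
where $(\varphi_u,\varphi_v)$ ranges over $W^{1,2}_0(S;\R^2)\times W^{2,2}_0(S)$ (the boundary conditions of $\mathscr{S}_0$ cancel).

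The second step is to turn this supremum into a projection formula. Introduce the closed subspace $V:=\{H(\varphi_u,\varphi_v|v):(\varphi_u,\varphi_v)\in W^{1,2}_0(S;\R^2)\times W^{2,2}_0(S)\}\subset L^2(\Omega;\R^{2\times2}_{\rm sym})$; it is closed because the map $(\varphi_u,\varphi_v)\mapsto H(\varphi_u,\varphi_v|v)$ is bounded below in the $W^{1,2}\times W^{2,2}$ norm by the Korn--Poincar\'e argument already used in Remark~\ref{rem: small computation} and Lemma~\ref{th: metric space-lin}. Since $\sqrt{\C^2_D}$ is a bounded, boundedly-invertible, self-adjoint positive operator on $L^2(\Omega;\R^{2\times2}_{\rm sym})$, we change variables: writing $\eta:=\sqrt{\C^2_D}\,H(\varphi_u,\varphi_v|v)$ running over the closed subspace $\widetilde V:=\sqrt{\C^2_D}\,V$, and using $\C^2_W G(u,v):H = \sqrt{\C^2_D}^{-1}(\C^2_W G(u,v)):\sqrt{\C^2_D}H$, the supremum becomes
\begin{align*}
|\partial\phi_0|_{\mathcal{D}_0}(u,v) = \sup_{\eta\in\widetilde V\setminus\{0\}}\frac{\big(\langle \sqrt{\C^2_D}^{-1}\C^2_W G(u,v),\eta\rangle_{L^2(\Omega)}\big)^+}{\Vert\eta\Vert_{L^2(\Omega)}} = \big\Vert P_{\widetilde V}\big(\sqrt{\C^2_D}^{-1}\C^2_W G(u,v)\big)\big\Vert_{L^2(\Omega)},
\end{align*}
where $P_{\widetilde V}$ is orthogonal projection onto $\widetilde V$ (the positive part is irrelevant since $\widetilde V$ is a linear subspace, hence symmetric under $\eta\mapsto-\eta$).

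The third step is to identify $\mathcal{L}(u,v)$. Write $P_{\widetilde V}(\sqrt{\C^2_D}^{-1}\C^2_W G(u,v)) = \sqrt{\C^2_D}^{-1}\C^2_W G(u,v) - \zeta$ with $\zeta\perp\widetilde V$, i.e.\ $\langle\zeta,\sqrt{\C^2_D}H(\varphi_u,\varphi_v|v)\rangle=0$ for all test pairs. Set $\mathcal{L}(u,v):=-\sqrt{\C^2_D}\,\zeta\in L^2(\Omega;\R^{2\times2}_{\rm sym})$; then the orthogonality condition reads exactly $\int_\Omega \mathcal{L}(u,v):H(\varphi_u,\varphi_v|v)=0$, which is \eqref{lin-slope2-diffi}, and one has $\sqrt{\C^2_D}^{-1}(\C^2_W G(u,v)+\mathcal{L}(u,v)) = \sqrt{\C^2_D}^{-1}\C^2_W G(u,v) - \zeta = P_{\widetilde V}(\sqrt{\C^2_D}^{-1}\C^2_W G(u,v))$, giving the claimed identity for the slope. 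Measurability/linearity in $(u,v)$ of the operator $\mathcal{L}$ follows since $\zeta$ is obtained from $G(u,v)$ by a (parameter-dependent) orthogonal projection; one should record that $\mathcal{L}$ is well-defined as the unique element of $\sqrt{\C^2_D}\,\widetilde V^\perp$ with the stated property.

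\textbf{Main obstacle.} The delicate point is the rigorous justification that in the difference quotients above only the first-order-in-$s$ terms survive and the $s^2$ corrections are genuinely $o(s)$ uniformly enough to pass to the limit — this is where one must invoke Lemma~\ref{th: convexity2} and the representation Lemma~\ref{lemma: slopes} rather than naively differentiating, because $\phi_0$ is nonconvex and $\mathcal{D}_0$ is only a metric (not a norm). A second, more technical point is the closedness of $V$ (equivalently $\widetilde V$) in $L^2$: this requires the coercivity estimate $\Vert\varphi_u\Vert_{W^{1,2}(S)}+\Vert\varphi_v\Vert_{W^{2,2}(S)}\le C\Vert H(\varphi_u,\varphi_v|v)\Vert_{L^2(\Omega)}$ for zero-boundary-data fields, which follows from Korn's and Poincar\'e's inequalities as in Lemma~\ref{th: metric space-lin}(ii)--(iii) and Remark~\ref{rem: small computation}, but must be stated carefully since the coupling term ${\rm sym}(\nabla'\varphi_v\otimes\nabla'v)$ depends on the fixed $v\in W^{2,2}(S)\hookrightarrow W^{1,4}(S)$. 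Everything else is a routine Hilbert-space projection argument.
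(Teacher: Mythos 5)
Your argument is correct and follows essentially the same route as the paper: both first linearize the slope to the supremum $\sup_{(\varphi_u,\varphi_v)}\int_\Omega \C^2_W[G(u,v),H(\varphi_u,\varphi_v|v)]\,/\,\Vert\sqrt{\C^2_D}H(\varphi_u,\varphi_v|v)\Vert_{L^2(\Omega)}$ by discarding the quadratic corrections, and then realize this supremum via the minimizer of the associated quadratic functional. Your orthogonal projection onto the closed subspace $\widetilde V$ is exactly the paper's direct-method minimization of $\mathcal{F}$ over $\mathscr{T}$ (the Korn--Poincar\'e coercivity you cite for closedness is the same estimate the paper uses for coercivity of $\mathcal{F}$), and the resulting operator $\mathcal{L}$ coincides with the paper's $\C^2_D H(u_*,v_*|v)-\C^2_W G(u,v)$.
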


\Proof  To simplify the notation, we will write   $(\bar{u},\bar{v}) \to (u,v)$ instead of \BBB $\mathcal{D}_0((\bar{u},\bar{v}),(u,v)) \to 0$. \EEE     Recall the definition of the energy ${\phi}_0$ and the dissipation ${\mathcal{D}}_0$ in \eqref{eq: phi0} and \eqref{eq: D,D0-2},   as well as their representations in \eqref{eq: G and phi}-\eqref{eq: G and D}.    By Definition \ref{main def2}(ii) we have
\begin{align*}
|\partial {\phi}_0|_{{\mathcal{D}}_0}(u,v) &= \limsup_{(\bar{u},\bar{v}) \to (u,v)} \frac{({\phi}_0(u,v) - {\phi}_0(\bar{u},\bar{v}))^+}{{\mathcal{D}}_0\big((u,v),(\bar{u},\bar{v})\big)}\notag\\
& = \limsup_{(\bar{u},\bar{v}) \to (u,v)} \frac{\big(\int_\Omega \frac{1}{2}\big(Q^2_W\big(G(u,v) \big) - Q^2_W\big(G(\bar{u},\bar{v}) \big)\big) \big)^+}{\big(\int_\Omega Q^2_D \big( G(u,v)  - G(\bar{u},\bar{v}) \big)\big)^{1/2}}\notag\\
&= \limsup_{(\bar{u},\bar{v}) \to (u,v)} \frac{ \big(\int_\Omega  \C^2_W[G(u,v), G(u,v)  - G(\bar{u},\bar{v})] -  \frac{1}{2}Q^2_W\big(G(u,v)  - G(\bar{u},\bar{v})\big)  \big)^+} {\big(\int_\Omega  Q^2_D \big(G(u,v)  - G(\bar{u},\bar{v}) \big)\big)^{1/2}}.
\end{align*}
 This leads to
$$ |\partial {\phi}_0|_{{\mathcal{D}}_0}(u,v) = \limsup_{(\bar{u},\bar{v}) \to (u,v)} \frac{ \big(  \int_\Omega  \C^2_W[G(u,v), G(u,v)  - G(\bar{u},\bar{v})] \big)^+ } {\big(\int_\Omega Q^2_D \big(G(u,v)  - G(\bar{u},\bar{v}) \big)\big)^{1/2}}. $$
Indeed, to see this, we use that $(\bar{u},\bar{v}) \to (u,v)$  implies  $G(\bar{u},\bar{v}) \to G(u,v)$  strongly in $L^2(S;\R^{2\times 2})$ by Lemma \ref{th: metric space-lin}(iii) \BBB and \eqref{eq: Gnot}. Thus, \EEE we get
$$\int_\Omega Q^2_W \big(G(u,v)  - G(\bar{u},\bar{v}) \big) \Big(\int_\Omega   Q^2_D \big(G(u,v)  - G(\bar{u},\bar{v})\big)\Big)^{-1/2} \to 0.$$
Lemma \ref{th: metric space-lin}(iii) and  \BBB a \EEE Sobolev embedding also give
$$\Vert \nabla' (v_0-v_1) \otimes \nabla' (v_0-v_1)\Vert_{L^2(S)}\le C \Vert v_0 -v_1 \Vert^2_{W^{1,4}(S)} \le C \Vert v_0 -v_1 \Vert^2_{W^{2,2}(S)} \le C \mathcal{D}_0\big((u,v),(\bar{u},\bar{v}) \big)^2.  $$ 
This together with \eqref{eq: G and D}, \eqref{eq: H shorthand}-\eqref{eq: H shorthand2},  and the Cauchy-Schwartz inequality shows
\begin{align*}
|\partial {\phi}_0|_{{\mathcal{D}}_0}(u,v) = \limsup_{(\bar{u},\bar{v}) \to (u,v)} \frac{ \big( \int_\Omega  \C^2_W[G(u,v), H(u-\bar{u},v-\bar{v}|v)  ] \big)^+  } {\big(\int_\Omega Q^2_D(H(u-\bar{u},v-\bar{v}|v))\big)^{1/2}}.
\end{align*}
We introduce the space of test functions $\mathscr{T} = W^{1,2}_0(S;\R^2) \times W^{2,2}_0(S)$. Due to the linearity of   $H(\cdot, \cdot \, | v)$   we find
 \begin{align}\label{lin-slope1}
|\partial {\phi}_0|_{{\mathcal{D}}_0}(u,v) & = \sup_{({u}',{v}') \in \mathscr{T}} \frac{  \int_\Omega  \C^2_W[G(u,v), H(u',v'|v)  ]  } {\big(\int_\Omega  Q^2_D(H(u',v'|v))\big)^{1/2}}  =   \sup_{({u}',{v}') \in \mathscr{T}} \frac{  \int_\Omega  \C^2_W[G(u,v), H(u',v'|v)  ]  } {\Vert \sqrt{\C^2_D}H(u',v'|v) \Vert_{L^2(\BBB \Omega \EEE )}},
\end{align}
where in the second step we used the properties of $\C^2_D$.  We now consider the minimization problem 
$$ \min_{ (u',v') \in \mathscr{T}}  \BBB \mathcal{F} \EEE (u',v'),    $$
where 
 \begin{align*}
\mathcal{F}(u',v') :=    \frac{1}{2}   \int_\Omega \Big|\sqrt{\C^2_D} H(u',v'|v) \Big|^2  - \int_\Omega  \C_W[G(u,v), H(u',v'|v)]. 
 \end{align*}
 We note that the existence of a solution can be guaranteed by the direct method of the calculus of variations: to  show coercivity,   suppose $\mathcal{F}(u',v') \le C$. We  note that $\Vert H(u',v'|v) \Vert^2_{L^2(\Omega)} \le C$ by  Lemma \ref{D-lin}(ii)   with $C$ depending on $G(u,v)$.   A standard argument involving Poincar\'e's inequality and the boundary values yields $\Vert u' \Vert_{W^{1,2}(S)}  + \Vert v' \Vert_{W^{2,2}(S)}\le C $, where $C$  additionally depends on $\hat{u}$, $\hat{v}$, \BBB  and $\Vert v \Vert_{W^{2,2}(S)}$. \EEE   We refer to Remark \ref{rem: small computation}   for details. Moreover, the functional is lower semicontinuous as it is   convex in $H(u',v'|v)$ and $H(u',v'|v)$ is linear in $(u',v')$. 
 
 We denote a solution by $(u_*,v_*) \in \mathscr{T}$ and  we observe that $(u_*,v_*)$   satisfies 
\begin{align*}
&\int_\Omega  \sqrt{\C^2_D} H(u_*,v_*|v)   : \sqrt{\C^2_D}  H(\varphi_u,\varphi_v|v)   -  \int_\Omega  \C^2_W[G(u,v), H(\varphi_u,\varphi_v|v)  ] = 0 
\end{align*}
 for all $(\varphi_u,\varphi_v) \in \mathscr{T}$. This equation can also be formulated as
\begin{align}\label{lin-slope2}
 \int_\Omega \mathcal{L}(u,v): H(\varphi_u,\varphi_v|v) =  0
\end{align}
for all $(\varphi_u,\varphi_v) \in \mathscr{T}$, where we define the operator
\begin{align*}
\mathcal{L}(u,v): = \C^2_D H(u_*,v_*|v)   - \C^2_W   G(u,v).  
\end{align*}
\BBB By \EEE   the definition \eqref{eq: H shorthand} and the regularity of the functions, we find $\mathcal{L}(u,v) \in L^2(\Omega;\R^{2 \times 2}_{\rm sym})$.  By \eqref{lin-slope1}, \eqref{lin-slope2}, and the  definition of $\mathcal{L}$  we then get
\begin{align*}
|\partial {\phi}_0|_{{\mathcal{D}}_0}(u,v)& \ge   \frac{  \int_\Omega \big( \C^2_W G(u,v) + \mathcal{L}(u,v)\big) :  H(u_*,v_*|v)   } {\Vert \sqrt{\C^2_D}H(u_*,v_*|v) \Vert_{L^2(\Omega)}},  \\
&=   \frac{  \int_\Omega \sqrt{\C^2_D}^{-1}\big( \C^2_W G(u,v) + \mathcal{L}(u,v)\big) :  \sqrt{\C^2_D}H(u_*,v_*|v)   } {\Vert \sqrt{\C^2_D}H(u_*,v_*|v) \Vert_{L^2(\Omega)}} \\ &= \Big\|  \sqrt{\C^2_D} H(u_*,v_*|v)  \Big\| _{L^2(\Omega)}  =  \Big\|  \sqrt{\C^2_D}^{-1}\big(\C^2_W G(u,v) + \mathcal{L}(u,v)  \big)\Big\| _{L^2(\Omega)}. 
\end{align*} 
On the other hand, by a similar argument, in view of   \eqref{lin-slope1} and \eqref{lin-slope2}, we  find
\begin{align*}
|\partial {\phi}_0|_{{\mathcal{D}}_0}(u,v) &= \sup_{({u}',{v}') \in \mathscr{T}} \frac{  \int_\Omega \big( \C^2_W G(u,v) + \mathcal{L}(u,v)\big) :  H(u',v'|v)   } {\Vert \sqrt{\C^2_D}H(u',v'|v) \Vert_{L^2(\Omega)}}  \\ 
& \le  \Big\| \sqrt{\C^2_D}^{-1}\big(\C^2_W G(u,v) + \mathcal{L}(u,v) \big) \Big\| _{L^2(\Omega)},
 \end{align*}
 where in the inequality we again distributed $\sqrt{\C^2_D}$ suitably to the two terms and used the Cauchy-Schwartz inequality. This concludes the proof. \eop

Following ideas in \cite[Section 1.4]{AGS}, we now finally relate curves of maximal slope to solutions to the equations \eqref{eq: equation-simp}.

\begin{proof}[Proof of Theorem  \ref{maintheorem2}(ii)]
Since $(u(t),v(t))$ is a curve of maximal slope, we get that  ${\phi}_0(u(t),v(t))$ is decreasing in time, see \eqref{maximalslope}.   This together with Lemma \ref{th: metric space-lin}(ii) gives 
$$(u,v) \in L^\infty([0,\infty ); W^{1,2}(S;\R^2) \times W^{2,2}(S) ).$$
 Moreover, since  $|({u},{v})'|_{{\mathcal{D}}_0} \in L^2([0,\infty ))$ by \eqref{maximalslope}   and ${\mathcal{D}}_0$ is  equivalent to the strong topology on  $W^{1,2}(S;\R^2) \times W^{2,2}(S)$   (see Lemma \ref{th: metric space-lin}(iii)),  we observe that $u$ and $v$ are absolutely continuous curves in the Hilbert spaces $W^{1,2}(S;\R^2)$ and  $W^{2,2}(S;\R)$, respectively.  \BBB By \EEE using \cite[Remark 1.1.3]{AGS} we observe that $u$ and $v$ are differentiable for a.e.\ $t$  with $\partial_t  u(t) \in W^{1,2}(S;\R^2)$ and $\partial_t   v(t) \in W^{2,2}(S)$  for a.e.\ $t$. More precisely,  we have
\begin{align}\label{eq: regularity}
(u,v) \in W^{1,2}([0,\infty);W^{1,2}(S;\R^2) \times W^{2,2}(S))
\end{align}
and for all $0 \le s < t$, and  a.e.\ in $S$ there holds
\begin{align}\label{eq: tderiv}
&\nabla' u(t) - \nabla' u(s) =  \int_s^t \partial_t \nabla' u(r) \, dr, \notag\\  &\nabla' v(t) - \nabla' v(s) =  \int_s^t \partial_t \nabla' v(r)\, dr, \ \ \ \   (\nabla')^2 v(t) - (\nabla')^2 v(s) =  \int_s^t \partial_t (\nabla')^2 v(r)\, dr.
 \end{align}
As a preparation for the representation of the metric derivative, we now consider the difference $G(u(s),v(s))  - G(u(t),v(t)$. For a.e.\ $t$ and a.e.\ $x \in \Omega$ we obtain by \eqref{eq: H shorthand2} and the linearity of  $H(\cdot,\cdot \, | \, v(t))$  
  \begin{align}\label{eq: derivative in Hilbert1}
 \lim_{s \to t} &\frac{G(u(t),v(t))  - G(u(s),v(s))}{t-s} \notag \\&   = \lim_{s \to t} H\Big(\frac{u(t)-u(s)}{t-s},\frac{v(t)-v(s)}{t-s}  \, \Big|  \, v(t) \Big)\notag - \lim_{s \to t} \, (\nabla' v(t) - \nabla' v(s)) \otimes \frac{\nabla' v(t) - \nabla' v(s)}{2(t-s)} \notag\\
 &  =  H\big(\partial_t u(t), \partial_t v(t) \,  | \,  v(t)\big). 
 \end{align}
Similarly, by \eqref{eq: H shorthand2}, taking the integral over $\Omega$, using \eqref{eq: tderiv},   Poincar\'e's inequality,   and  H\"older's inequality  we get  for all $0 \le s < t$
   \begin{align}\label{eq: derivative in Hilbert2}
  \Big\|\big(G(u(t),& v(t))  -  G(u(s),v(s))\big) -  \int_s^t H(\partial_t u(r), \partial_t v(r) \,  | \, v(t)) \, dr \Big\|^2_{L^2(\Omega)}\notag \\ &\    \le   \int_\Omega  |\nabla' v(t) - \nabla' v(s)|^4   \le C\Big(\int_\Omega  |(\nabla')^2 (v(t) - v(s))|^2\Big)^2 \notag \\ & =  C\Big( \int_\Omega  \Big|\int_s^t (\nabla')^2 \partial_t v(r, \BBB x \EEE ) \, dr\Big|^2 \,dx\Big)^2 \notag \le  C\Big( |t-s|  \int_\Omega \int_s^t |(\nabla')^2 \partial_t v(r, \BBB x \EEE )|^2 \, dr \,dx \Big)^2  \notag
\\ & = C|t-s|^2  \Big(  \int_s^t  \Vert (\nabla')^2 \partial_t v(r)\Vert^2_{L^2(\Omega)}\, dr \Big)^2.   
 \end{align}
We now estimate the metric derivative $|(u,v)'|_{{\mathcal{D}}_0}$. By  \eqref{eq: G and D},   \eqref{eq: derivative in Hilbert1}, and Fatou's lemma  we get for a.e.\ $t$
\begin{align}\label{PDE1}
|(u,v)'|_{{\mathcal{D}}_0}(t) & = \lim_{s \to t} \Big(\frac{{\mathcal{D}}_0\big((u(t),v(t)),(u(s),v(s))\big)^2}{|t-s|^2}\Big)^{1/2} 
\notag \\
 &\ge   \Big(  \int_\Omega \liminf_{s\to t}  Q^2_D \Big( \frac{ G(u(t),v(t)) - G(u(s),v(s)) }  {|t-s|} \Big)\Big)^{1/2}  \notag \\
& = \Big\| \sqrt{\C_D^2} H\big(\partial_t u(t), \partial_t v(t) \, | \,  v(t)\big) \Big\|_{L^2(\Omega)}. 
\end{align}
We now analyze the derivative  $\frac{{\rm d}}{{\rm d}t} {\phi}_0(u(t),v(t))$ of the absolutely continuous curve ${\phi}_0 \circ (u,v)$. \BBB Note that for a.e.\ $t$  we have $\lim_{s \to t}\int_s^t \Vert (\nabla')^2 \partial_t v(r)\Vert^2_{L^2(\Omega)}\, dr = 0$   by \eqref{eq: regularity} and, in a similar fashion,  $\lim_{s \to t}(s-t)^{-1}\Vert \int_s^t  H(\partial_t u(r), \partial_t v(r) \,  | \, v(t))\Vert^2_{L^2(\Omega)}\, dr = 0$  by \eqref{eq: regularity}  and H\"older's inequality. \EEE Thus, using   \eqref{eq: G and phi} and   \eqref{eq: derivative in Hilbert2} we get for a.e.\ $t$ that   
\begin{align*}
\frac{{\rm d}}{{\rm d}t} {\phi}_0(u(t),v(t)) & = \lim_{s \to t} \frac{{\phi}_0(u(t),v(t)) - {\phi}_0(u(s),v(s))}{t-s} \\& \BBB \ge \EEE \liminf_{s \to t}   \frac{1}{(t-s)}\int_\Omega   \C^2_W[G(u(t),v(t)),G(u(t),v(t))-G(u(s),v(s))] \\
& \ \ \ - \BBB \limsup_{s \to t} \EEE   \frac{1}{2(t-s)} \int_\Omega Q^2_W\big(G(u(t),v(t)) - G(u(s),v(s)) \big)  \\ 
&\ge \liminf_{s \to t}   \frac{1}{(t-s)}\int_\Omega   \C^2_W[G(u(t),v(t)),G(u(t),v(t))-G(u(s),v(s))] \\
& = \int_\Omega   \C^2_W[G(u(t),v(t)),H(\partial_t u(t), \partial_t v(t) \, | \, v(t))].
\end{align*}
By the property of $\mathcal{L}$ stated in \eqref{lin-slope2-diffi} we get 
\begin{align*}
\frac{{\rm d}}{{\rm d}t} {\phi}_0(u(t),v(t)) & \BBB \ge \int_\Omega  \big( \C^2_W  G(u(t),v(t)) + \mathcal{L}(u(t),v(t)) \big) :   H\big(\partial_t u(t), \partial_t v(t) \, | \, v(t)\big)  \EEE \\
&= \int_\Omega  \Big( \sqrt{\C^2_D}^{-1} \big(\C^2_W G(u(t),v(t)) + \mathcal{L}(u(t),v(t)) \big)\\ & \ \ \ \ \ \ \ \ \ \ \ \ \ \ \ \ \ \ \ \ \ \ \ \   : \sqrt{\C^2_D} H\big(\partial_t u(t), \partial_t v(t) \, | \, v(t)\big)\Big).
\end{align*}
We find by Lemma \ref{lemma: lin-slope}, \eqref{PDE1}, and Young's inequality 
$$  \frac{{\rm d}}{{\rm d}t} {\phi}_0(u(t),v(t)) \ge - \frac{1}{2}  \Big(|\partial {\phi}_0|^2_{{\mathcal{D}}_0}(u(t),v(t)) +  |(u,v)'|^2_{{\mathcal{D}}_0}(t)\Big) \ge \frac{{\rm d}}{{\rm d}t} {\phi}_0(u(t),v(t)),$$
where  the last step is a consequence of the fact that $(u(t),v(t))$ is a curve of maximal slope with respect to ${\phi}_0$. Consequently, all inequalities employed in the proof are in fact equalities and we get
$$  \sqrt{\C^2_D}^{-1} \big(\C^2_W G(u(t),v(t)) + \mathcal{L}(u(t),v(t))\big) =  -\sqrt{\C^2_D} H(\partial_t u(t), \partial_t v(t)\, | \, v(t))
 $$
 pointwise a.e.\ in $\Omega$  for a.e.\ $t$. Multiplying the equation with  $\sqrt{\C^2_D}$ from the left and testing with $H(\varphi_u,\varphi_v \, |\, v(t))$ from the right  for $(\varphi_u, \varphi_v) \in  W^{1,2}_0(S;\R^2) \times W^{2,2}_0(S)$, \NNN we obtain 
\begin{align}\label{eq: given equation}
  \int_\Omega  \big(\C^2_W  G(u(t),v(t))    + {\C^2_D} H\big(\partial_t u(t), \partial_t v(t) \,  | \,  v(t)\big) \big) : H(\varphi_u,\varphi_v\, | \, v(t)) = 0,
\end{align}
 \EEE  where we again used   property \eqref{lin-slope2-diffi}. In the following we will again use the  representation $G(u(t),v(t)) = \BBB {\rm sym}(G_0)(t) \EEE + x_3 G_1(t)$ with the abbreviations 
$${\rm sym}(G_0)(t) = e( u(t)) + \frac{1}{2} \nabla' v(t) \otimes \nabla' v(t), \ \ \  \ \ \ \ G_1(t) = - (\nabla')^2 v(t)$$ 
 introduced in  \eqref{eq: Gnot}. Consider also the corresponding time derivatives
 $$\partial_t G_0(t) = e(\partial_t u  (t)  ) +    \nabla' \partial_t v(t)   \odot   \nabla' v(t),\ \ \  \ \ \ \ \partial_t G_1(t) = - (\nabla')^2 \partial_t v(t), $$
where  for convenience we use the notation $\odot$ for the symmetrized vector product. \EEE Recall also \eqref{eq: H shorthand}  \BBB  and observe that $H(\partial_t u(t), \partial_t v(t) \,  | \,  v(t)) = \partial_t G_0 \PPP (t) \EEE + x_3 \partial_t G_1 \PPP (t) \EEE$. \NNN Evaluating \eqref{eq: given equation} for  $\varphi_v = 0$  \EEE leads to the equations
 \begin{align}\label{eq: weak1}
  \int_S  \big(\C^2_W  G_0(t)  +  \C^2_D  \partial_t G_0(t)  \big) : \nabla' \varphi_u    = 0.
 \end{align}
We observe that \eqref{eq: weak1}  gives \eqref{eq: weak equation1}.  \NNN Evaluating \eqref{eq: given equation} for $\varphi_u  = 0$ yields  
\begin{align*}
\int_S \big(\C^2_W  G_0(t)  +  \C^2_D  \partial_t G_0(t)  \big)   : \big(\nabla' v(t)  \odot  \nabla' \varphi_v  \big) - \frac{1}{12} \Big(    \C^2_W G_1(t) + \C^2_D\partial_t G_1(t) \Big) : (\nabla')^2 \varphi_v  = 0.
 \end{align*}
\EEE This gives the second equation \eqref{eq: weak equation2}  and confirms that $(u,v)$ is a weak solution to \eqref{eq: equation-simp}.
 \end{proof}

\noindent \textbf{Acknowledgements} This work was funded by  the DFG project FR 4083/1-1 and  by the Deutsche Forschungsgemeinschaft (DFG, German Research Foundation) under Germany's Excellence Strategy EXC 2044 -390685587, Mathematics M\"unster: Dynamics--Geometry--Structure. M.F.~acknowledges support by the Alexander von Humboldt Stiftung and thanks for the warm hospitality at \'{U}TIA AV\v{C}R, where this project has been initiated. M.K.~acknowledges support by the GA\v{C}R project 17-04301S and GA\v{C}R-FWF project 19-29646L.


 \typeout{References}


\begin{thebibliography}{10}

 



 \bibitem{AMM}
\newblock {\sc  H.~Abels, M.~G.~Mora, S.~M\"uller}.
\newblock {\em The time-dependent von K\'arm\'an plate equation  as a limit
of 3d nonlinear elasticity}.
\newblock Calc.\ Var.\ PDE
\newblock {\bf 41} (2011),  241--259.
   

\bibitem{AdamsFournier:05}
{\sc R.~A.~Adams, J.~J.~F.~Fournier}.
\newblock {\em Sobolev Spaces}. (2nd ed)
\newblock Elsevier, Amsterdam, 2003.

\bibitem{AGS} 
{\sc L.~Ambrosio, N.~Gigli, and G.~Savar\'e}.
\newblock {\em Gradient Flows in Metric Spaces and in the Space
of Probability Measures}. 
\newblock Lectures Math.\ ETH Z\"urich, Birkh\"auser, Basel, 2005.






\bibitem{Antmann}
{\sc S.~S.~Antman}.
\newblock {\em  Physically unacceptable viscous stresses}.
\newblock   Z.\ Angew.\ Math.\ Phys.\
\newblock {\bf 49} (1998), 980--988.

\bibitem{Antmann:04}
{\sc S.~S.~Antman}.
\newblock {\em Nonlinear Problems of Elasticity}.
\newblock Springer, New York, 2004.


\bibitem{Ball:77}
 {\sc J.~M.~Ball}
\newblock {\em Convexity conditions and existence theorems in nonlinear elasticity.}
\newblock Arch.\ Ration.\ Mech.\ Anal.\
\newblock   {\bf 63} (1977), 337--403.


\bibitem{BCO}
{\sc J.~M.~Ball, J.~C.~Currie, P.~L.~Olver}.
\newblock {\em Null Lagrangians, weak continuity, and
variational problems of arbitrary order}.
\newblock J.\ Funct.\ Anal.\ 
\newblock {\bf 41} (1981), 135--174.

%


\bibitem{Batra}
{\sc R.~C.~Batra}.
\newblock {\em Thermodynamics of non-simple elastic materials}.
\newblock J.\ Elasticity\ 
\newblock {\bf 6} (1976), 451--456.





 

\EEE

\second
\bibitem{BK}
{\sc P.~Bella, R.V.~Kohn}.
\newblock{\em Coarsening of folds in hanging drapes}. 
\newblock Comm.\ Pure Appl.\ Math.
\newblock {\bf 70}  (2017), 978–-1021.

\EEE

\bibitem{BBMK}
{\sc B.~Bene\v{s}ov\'{a}, M.~Kru\v{z}\'{\i}k}.
\newblock {\em  Weak lower semicontinuity of integral functionals and applications}.
\newblock SIAM Rev.
\newblock {\bf 59}  (2017),  703--766.



\bibitem{Bock1}
\newblock I.~Bock,
\newblock On Von K\'{a}rm\'{a}n equations for viscoelastic plates, 
\newblock \emph{J.\ Comp.\ Appl.\ Math.}, 
\newblock {\bf 63} (1995), 277--282.

\bibitem{Bock2}
\newblock I.~Bock, J.~Jaru\v{s}ek,  
\newblock  Solvability of dynamic contact problems for elastic von K\'{a}rm\'{a}n plates, 
\newblock \emph{SIAM J.\ Math.\ Anal.}, 
\newblock {\bf 41} (2009), 37--45.

\bibitem{Bock3}
\newblock I.~Bock, J.~Jaru\v{s}ek, M.~\v{S}ilhav\'{y},
\newblock On the solutions of a dynamic contact problem for a thermoelastic von  K\'{a}rm\'{a}n plate, 
\newblock \emph{Nonlin. Anal.: Real World Appl.},
\newblock {\bf 32} (2016), 111--135.




\bibitem{BCGS}
{\sc A.~Braides, M.~Colombo, M.~Gobbino, M.~Solci}. 
\newblock {\em Minimizing movements along a sequence of functionals and curves of maximal slope}. 
\newblock Comptes Rendus Mathematique
\newblock {\bf 354} (2016), 685–-689.


\bibitem{capriz}
 {\sc G.~Capriz}.
\newblock{\em Continua with latent microstructure}.
\newblock Arch.\ Ration.\ Mech.\ Anal.\
\newblock {\bf 90} (1985), 43--56.




 \BBB
 \bibitem{Casarino}
{\sc V.~Casarino, D.~Percivale}.
\newblock {\em A variational model for nonlinear elastic plates}.
\newblock  J.\ Convex Anal.\  \EEE
\newblock {\bf 3} (1996), 221--243.


\bibitem{Ciarlet}
{\sc P.~G.~Ciarlet}.
\newblock{\em   Mathematical Elasticity, Vol.~I: Three-dimensional Elasticity}.
\newblock North-Holland, Amsterdam, 1988.

%

 
\bibitem{DalMaso:93}
{\sc G. Dal Maso}.
\newblock {\em An introduction to $\Gamma$-convergence}.
\newblock Birkh{\"a}user, Boston $\cdot$ Basel $\cdot$ Berlin 1993. 




 

\bibitem{DalMasoNegriPercivale:02}
{\sc G.~Dal Maso, M.~Negri, D.~Percivale}.
\newblock {\em  Linearized elasticity as $\Gamma$-limit
of finite elasticity}.
\newblock  Set-valued\ Anal.\
\newblock {\bf 10} (2002), 165--183.




\bibitem{DGMT}
{\sc E.~De Giorgi, A.~Marino, M.~Tosques}.
\newblock {\em Problems of evolution in metric spaces
and maximal decreasing curve}. 
\newblock  Att.\ Accad.\ Naz.\ Lincei\ Rend.\ Cl.\ Sci.\ Fis.\ Mat.\ Natur.\ 
\newblock {\bf 68} (1980), 180--187.


\bibitem{dunn}
{\sc J.E. Dunn, J.~Serrin}.
\newblock{\em  On the  thermomechanics of interstitial working}.
\newblock Arch.\ Ration.\ Mech.\ Anal.\
\newblock {\bf 88} (1985), 95--133.




 
 \BBB
 
\bibitem{Foppl}
{\sc A.~F\"oppl}.
\newblock {\em  Vorlesungen über technische Mechanik} \textbf{5}
\newblock  Leipzig, 1907, 132--139. 
\EEE


\bibitem{MFMK}
{\sc M.~Friedrich, M.~Kru\v{z}\'{\i}k}.
\newblock{\em On the passage from nonlinear to linearized viscoelasticity}.
\newblock SIAM J.\ Math.\ Anal.
\newblock {\bf 50} (2018), 4426--4456.



 
\BBB

\bibitem{MFMKJV}
{\sc M.~Friedrich, M.~Kru\v{z}\'{\i}k, J.~Valdman}.
\newblock{\em Numerical approximation of  von K\'{a}rm\'{a}n viscoelastic plates}.
\newblock Disc.\ Cont.\ Dynam.\ Syst.\  to appear.


\EEE


 

\bibitem{FrieseckeJamesMueller:02}
{\sc G.~Friesecke, R.~D.~James, S.~M{\"u}ller}.
\newblock {\em A theorem on geometric rigidity and the derivation of nonlinear plate theory from three-dimensional elasticity}. 
\newblock Comm.\ Pure Appl.\ Math.\ 
\newblock {\bf 55} (2002), 1461--1506. 

\bibitem{hierarchy}
{\sc G.~Friesecke, R.~D.~James, S.~M{\"u}ller}.
\newblock {\em A hierarchy of plate models derived from
nonlinear elasticity by Gamma-Convergence}. 
\newblock Arch.\ Ration.\ Mech.\ Anal.\
\newblock {\bf 180} (2006), 183--236.
 


 


\bibitem{HealeyKroemer:09}
{\sc T.~J.~Healey, S.~Kr\"{o}mer}.
\newblock {\em Injective weak solutions in second-gradient
nonlinear elasticity.}
 \newblock  ESAIM Control Optim.\ Calc.\ Var.\
 \newblock {\bf  15} (2009),  863–-871.

\bibitem{SKTR}
{\sc S.~Kr\"{o}mer, T.~Roub\'{\i}\v{c}ek}.
\newblock {\em  Quasistatic viscoelasticity with self-contact at large strains}.
\newblock J.\ Elasticity, to appear.

\bibitem{MKTR}
{\sc M.~ Kru\v{z}\'{\i}k, T.~Roub\'{\i}\v{c}ek}.
\newblock {\em Mathematical Methods in Continuum Mechanics of Solids}.
\newblock Springer, Cham, 2019.


\bibitem{lecumberry}
{\sc M.~Lecumberry, S.~M{\"u}ller}.
\newblock {\em Stability of slender bodies under compression
and validity of the von K\'arm\'an theory}. 
\newblock Arch.\ Ration.\ Mech.\ Anal.\
\newblock {\bf 193} (2009), 255--310.




 \BBB

\bibitem{Dret1}
{\sc H.~Le Dret, A.~Raoult}.
\newblock {\em  The nonlinear membrane model as a variational limit of
nonlinear three-dimensional elasticity}.
\newblock  J.\ Math.\ Pures Appl.\
\newblock {\bf 73} (1995), 549--578.




\bibitem{Dret2}
{\sc H.~Le Dret, A.~Raoult}.
\newblock {\em The membrane shell model in nonlinear elasticity: a variational
asymptotic derivation}.
\newblock  J.\ Nonl.\ Sci.\ 
\newblock {\bf 6} (1996), 59--84.
\EEE

\second 

\bibitem{LMP}
{\sc M.~Lewicka, L.~Mahadevan, M.R.~Pakzad}.
\newblock {\em The Monge-Amp\'{e}re constraint: matching of isometries, density and regularity and elastic theories of
shallow shells}.
\newblock  Ann.\ IHP (C) Non Linear
Analysis
\newblock {\bf  34} (2017), 45--67.


\bibitem{MPT}
{\sc F.~Maddalena, D.~Percivale, F.~Tomarelli}.
\newblock{\em Variational problems for
F\"{o}ppl-von K\'{a}rm\'{a}n plates}.
\newblock  SIAM J.\ Math.\ Anal.\ 
\newblock {\bf 50} (2018), 251--282.

\EEE

\bibitem{MOS} 
{\sc A.~Mielke, C.~Ortner, Y.~\c{S}eng\"ul}. 
\newblock {\em An approach to nonlinear viscoelasticity via metric gradient flows}. 
\newblock SIAM J.\ Math.\ Anal.\
\newblock {\bf 46} (2014), 1317--1347. 

\bibitem{MielkeRoubicek:16}
{\sc A.~Mielke, T.~Roub\'{\i}\v{c}ek}.
\newblock {\em  Rate-independent elastoplasticity at finite strains and its numerical approximation}.
\newblock Math.\ Models \& Methods in Appl.\ Sci.\
\newblock {\bf 26} (2016), 2203--2236.

\bibitem{MielkeRoubicek}
{\sc A.~Mielke, T.~Roub\'{\i}\v{c}ek}.
\newblock {\em Thermoviscoelasticity in Kelvin-Voigt rheology
at large strains}.
\newblock Preprint at \href{https://arxiv.org/abs/1903.11094}{ arXiv:1903.11094}  






\bibitem{Ortner}
{\sc C.~Ortner}.
\newblock {\em  Two Variational Techniques for the Approximation of Curves of Maximal
Slope}.
\newblock  Technical report NA05/10,
\newblock Oxford University Computing Laboratory, Oxford,
UK, 2005.

\EEE






 

 \BBB 
 \bibitem{Pantz}
{\sc O.~Pantz}.
\newblock {\em On the justification of the nonlinear inextensional plate model}.
\newblock  Arch.\ Ration.\ Mech.\ Anal.\
\newblock {\bf 167} (2003), 179--209.
\EEE




\bibitem{Park}
\newblock J.Y.~Park, J.R.~Kang, 
\newblock Uniform decay of solutions for von Karman equations of dynamic viscoelasticity with memory, 
\newblock \emph{Acta Appl.\ Math.}, 
\newblock (2010) {\bf 110}, 1461--1474.

 

\bibitem{Podio}
{\sc P.~Podio-Guidugli}.
\newblock {\em Contact interactions, stress, and material symmetry for
nonsimple elastic materials.}
\newblock Theor.\ Appl.\ Mech.\
\newblock {\bf 28--29} (2002), 261--276.




\bibitem{S1}
{\sc E.~Sandier, S.~Serfaty}. 
\newblock {\em Gamma-convergence of gradient flows with applications to Ginzburg-Landau}. 
\newblock Comm.\ Pure Appl.\ Math.\ 
\newblock {\bf 57} (2004),  1627-–1672. 


 
 

\bibitem{S2}
{\sc S.~Serfaty}. 
\newblock {\em Gamma-convergence of gradient flows on Hilbert and metric spaces and applications}. 
\newblock Discrete Contin.\ Dyn.\ Syst.\ Ser.\ A 
\newblock {\bf 31} (2011),  1427-–1451. 




\EEE

 

\bibitem{Toupin:62}
{\sc R.~A.~Toupin}.
\newblock {\em  Elastic materials with couple stresses}.
\newblock Arch. Ration. Mech. Anal.\
\newblock {\bf 11} (1962), 385--414.

\bibitem{Toupin:64}
{\sc R.~A.~Toupin}.
\newblock {\em Theory of elasticity with couple stress}.
\newblock Arch. Ration. Mech. Anal.\
\newblock {\bf 17} (1964), 85--112.


 
 
 
 \BBB
 
\bibitem{VonKarman}
{\sc T.~von K\'{a}rm\'{a}n}.
\newblock {\em  Festigkeitsprobleme im Maschinenbau in Encyclop\"adie der Mathematischen
Wissenschaften}.
\newblock  vol.\ IV/4, Leipzig, 1910, 311--385.  
\EEE


 




\end{thebibliography}
\end{document}